\newcommand{\assign}{:=}
\newcommand{\backassign}{=:}
\newcommand{\cdummy}{\cdot}
\newcommand{\longhookrightarrow}{{\lhook\joinrel\relbar\joinrel\rightarrow}}
\newcommand{\mathLaplace}{\Delta}
\newcommand{\mathd}{\mathrm{d}}
\newcommand{\nin}{\not\in}
\newcommand{\nobracket}{}
\newcommand{\nocomma}{}
\newcommand{\tmcolor}[2]{{\color{#1}{#2}}}
\newcommand{\tmdummy}{$\mbox{}$}
\newcommand{\tmem}[1]{{\em #1\/}}
\newcommand{\tmop}[1]{\ensuremath{\operatorname{#1}}}
\newcommand{\tmrsub}[1]{\ensuremath{_{\textrm{#1}}}}
\newcommand{\tmtextit}[1]{\text{{\itshape{#1}}}}
\newenvironment{enumeratealpha}{\begin{enumerate}[a{\textup{)}}] }{\end{enumerate}}
\newenvironment{enumerateroman}{\begin{enumerate}[i.] }{\end{enumerate}}
\theoremstyle{plain}
\newtheorem{theorem}{Theorem}
\newtheorem{lemma}[theorem]{Lemma}
\newtheorem{proposition}[theorem]{Proposition}
\newtheorem{corollary}[theorem]{Corollary}
\theoremstyle{definition}
\newtheorem{definition}[theorem]{Definition} 
\theoremstyle{remark}
\newtheorem{remark}[theorem]{Remark}
\newtheorem{example}[theorem]{Example}
\newcommand{\tmfloatcontents}{}
\newlength{\tmfloatwidth}
\newcommand{\tmfloat}[5]{
  \renewcommand{\tmfloatcontents}{#4}
  \setlength{\tmfloatwidth}{\widthof{\tmfloatcontents}+1in}
  \ifthenelse{\equal{#2}{small}}
    {\setlength{\tmfloatwidth}{0.45\linewidth}}
    {\setlength{\tmfloatwidth}{\linewidth}}
  \begin{minipage}[#1]{\tmfloatwidth}
    \begin{center}
      \tmfloatcontents
      \captionof{#3}{#5}
    \end{center}
  \end{minipage}}
\newcommand{\1}{\mathbbm{1}}
\begin{document}

\title{Weak well-posedness of energy solutions to singular SDEs with
supercritical distributional drift}
\author{
  Lukas Gräfner\\
  Freie Universität Berlin\\
  Institut f\"ur Mathematik \\
  \texttt{l.graefner@fu-berlin.de}
\and
  Nicolas Perkowski\\
  Freie Universität Berlin\\
  Institut f\"ur Mathematik \\
  \texttt{perkowski@math.fu-berlin.de}
}
\maketitle

\begin{abstract}
  We study stochastic differential equations with additive noise and
  distributional drift on $\mathbb{T}^d$ or $\mathbb{R}^d$ and $d \geqslant
  2$. We work in a scaling-supercritical regime using energy solutions and
  recent ideas for generators of singular stochastic partial differential
  equations. We mainly focus on divergence-free drift, but allow for
  scaling-critical non-divergence free perturbations. In the time-dependent
  divergence-free case we roughly speaking prove weak well-posedness of energy
  solutions with initial law $\mu \ll \tmop{Leb}$ for drift $b \in L^p_T B^{-
  \gamma}_{p, 1}$ with $p \in (2, \infty]$ and $p \geqslant \frac{2}{1 -
  \gamma}$. For time-independent $b$ we show weak well-posedness of energy
  solutions with initial law $\mu \ll \tmop{Leb}$ under certain structural
  assumptions on $b$ which allow local singularities such that $b \nin B^{-
  1}_{2 d/(d - 2), 2}$, meaning that for any $p > 2$ in sufficiently high dimension there exists $b \nin B^{- 1}_{p, 2}$ such
  that weak well-posedness holds for energy solutions with drift $b$.
  
  \bigskip		
		\noindent{{\sc Keywords:} Singular SDEs;   regularization by noise; weak well-posedness; distributional drift, supercritical equations; energy solutions}
\end{abstract}

{\tableofcontents}

\section{Introduction}

We study the weak well-posedness and Markovianity of solutions to the SDE
\begin{equation}
  \mathd X_t = b (t, X_t) \mathd t + \sqrt{2} \mathd B_t, \qquad X_0 \sim \mu,
  \label{MainSDE}
\end{equation}
on $\mathbb{R}^d$ or $\mathbb{T}^d$, where $b$ is a singular drift
coefficient. Motivated by recent progress in scaling (sub-)critical singular
stochastic partial differential
equations~{\cite{Hairer2014,Gubinelli2015Paracontrolled,GubinelliPerkowski20}}
and by the modeling of diffusion in random media by SDEs with distributional
drift~{\cite{DelarueDiel16,CannizzaroChouk18,KrempPerkowski22,CannizzaroHaunschmidSibitzToninelli21}},
we are interested in scaling critical and supercritical distributional drift
$b$. Based on Hairer's regularity structures there is now a well-developed
pathwise solution and renormalization theory for singular SPDEs under a
scaling subcriticality condition. Subcriticality is crucial in this pathwise
approach, because it allows to interpret nonlinear equations as perturbations
of linearized equations. Using the probabilistic notion of energy solutions,
it is possible to prove weak existence for some relevant scaling supercritical
SPDEs~{\cite{GoncalvesJara14,GubinelliJara13}} and weak uniqueness in the
critical case~{\cite{GraefnerPerkowski23}}, but the weak uniqueness of
supercritical energy solutions is still open. Here we prove weak uniqueness of
energy solutions to certain supercritical finite-dimensional SDEs.

The criticality of equation \eqref{MainSDE} for a generic $b \in L^q_T B^s_{p,
r}$ is determined by rescaling the equation diffusively as $X^{\varepsilon}
(t) = \varepsilon^{- 1} X_{\varepsilon^2 t}$, i.e. in such a way that the law
of the Brownian motion is preserved, and then computing the behaviour of $\|
b^{\varepsilon} \|_{L^q_T B^s_{p, r}}$, where $b^{\varepsilon}$ is the
rescaled drift. Intuitively, if this quantity blows up in the limit
$\varepsilon \rightarrow 0$, \ the regularizing effects of the Brownian noise
cannot compensate the singularity of the drift. In that case, equation
\eqref{MainSDE} is considered {\tmem{supercritical}}. This leads to the
relation
\begin{equation}
  \frac{d}{p} + \frac{2}{q} > 1 + s, \label{criticalityrelation}
\end{equation}
for supercriticality (see e.g.
{\cite{BeckFlandoliGubinelliMaurelli14,HaoZhang23}}). Likewise, the equation
is considered {\tmem{critical}} or {\tmem{subcritical}} if $>$ is replaced by
$=$ or $<$, respectively.

Our intuition is that while scaling subcriticality is closely related to
well-posedness of stochastic differential equations, it is not the decisive
criterion. There are scaling subcritical problems that cannot be given direct
sense~{\cite{Coutin2002,Hairer2024}}, and equations that are supercritical at
least from a regularity counting perspective but still possess weakly unique
solutions~{\cite{Perkins2002,Konarovskyi2019}}. This also aligns with the
theory of renormalized solutions to ODEs~{\cite{DiPerna1989}}, where
well-posedness of the ODE $\dot{x} = b (x)$ is shown (among others) for $b \in
W^{1, 1} \supset B^1_{1, 1}$, i.e. $p = 1$ and $s = 1$, despite the scaling
subcriticality relation $\frac{d}{p} < 1 + s$ being violated in $d \geqslant
2$. Yet, even without noise the equation is well-posed, and this can be
extended to include noise~{\cite{Figalli2008,LeBris2019}}, further
demonstrating that scaling criticality and well-posedness do not need to
coincide.

There is extensive work on SDE \eqref{MainSDE} under low regularity
assumptions on $b$, in particular for functions $b$ in Lebesgue spaces.
Influential works include
{\cite{Zvonkin1974,Veretennikov1981,Krylov2005,Flandoli2010}}, which prove
strong well-posedness and the existence of stochastic flows for $b$ in scaling
subcritical Lebesgue spaces, see also the monographs
{\cite{Flandoli2011,Lee2022}} or the recent work {\cite{AnzelettiLeLing23}}.
For distributional $b$ there are typically only weak well-posedness results,
e.g. {\cite{Flandoli2003,FlandoliIssoglioRusso17}}, with the notable exception
of the one-dimensional case~{\cite{Bass2001}}, and even in the subcritical
case it may be necessary to enrich $b$ with higher order ``rough path type''
data~{\cite{DelarueDiel16,CannizzaroChouk18,KrempPerkowski22,Kremp2023}}.
Based on Nash theory or Dirichlet forms, very strong results exist for
subcritical and critical divergence-free or gradient drift, for which it is
not necessary to enrich $b$ {\cite{Osada1987,Mathieu1994,Stannat1999}}. For
functions $b$ there have been recent breakthroughs in the critical and
supercritical regime (e.g.
{\cite{BeckFlandoliGubinelliMaurelli14,RoecknerZhao23,ChenZhangZhao21,ZhangZhao21,Krylov2023}}),
but for distributional $b$ the supercritical case is worse understood. The
work most related to ours is {\cite{HaoZhang23}}, and we comment on
similarities and differences after stating our main results. To the best of
our knowledge, our work and {\cite{HaoZhang23}} contain the first
well-posedness results for equation (\ref{MainSDE}) in the distributional
supercritical regime.

The solution concept we focus on in this work is that of {\tmem{energy
solutions}}. These were initially introduced in the infinite-dimensional
setting in {\cite{GoncalvesJara14}} to describe mesoscopic scaling limits of
particle systems as solutions to singular SPDEs, but their uniqueness remained
open. A refined notion of energy solutions, introduced in
{\cite{GubinelliJara13}}, was shown to be unique for the KPZ equation in
{\cite{GubinelliPerkowski18}}, providing the first probabilistic
well-posedness result for a singular SPDE outside of the Da Prato-Debussche
regime. In {\cite{GubinelliPerkowski20}} a new approach to uniqueness via
construction of the infinitesimal generator was introduced and subsequentially
applied in {\cite{GubinelliTurra20,LuoZhu21}}. This approach has since been
further developed further and will be contained in the upcoming work
{\cite{GraefnerPerkowski23plus}} on (sub-)critical singular SPDEs, parts of
which have already been outlined in the lecture notes
{\cite{GraefnerPerkowski23}}. In the context of SDEs, energy solutions are
essentially weak solutions for which the drift is defined by an approximation
procedure and which satisfy an {\tmem{energy estimate}}. This energy estimate
can be seen as a selection principle which guarantees uniqueness, see
Remark~\ref{rmk:modena} below.

Let us state our main results. Let $M \in \{ \mathbb{T}^d, \mathbb{R}^d \}$.
Throughout the paper we consider $d$-dimensional Schwartz distributions $b$ on
$M$ of the form
\[ b = b_1 + b_2, \]
where {\tmem{always}}
\[ \nabla \cdummy b_1 \equiv 0. \]
For $b \in \mathcal{S}' (M ; \mathbb{R}^d)$ (regular enough at the origin in
Fourier space) we can find a canonical decomposition, the {\tmem{Helmholtz
decomposition $(A (b), V (b))$}} of $b$, via
\begin{equation}
  A_{i j} (b) = \mathLaplace^{- 1} (\partial_i b^j - \partial_j b^i)
  \label{Helmholtzdivfree}
\end{equation}
and
\begin{equation}
  V (b) = \mathLaplace^{- 1} \nabla \cdummy b, \label{Helmholtzdiv}
\end{equation}
so that
\begin{equation}
  b^i = b_1^i + b_2^i \assign [\nabla \cdummy A_i] + [\partial_i V + \mathbbm{1}_{M
  =\mathbb{T}^d} \hat{b} (0)^i], \label{Helmholtz}
\end{equation}
where $A_i$ is the $i$th column of the antisymmetric (distributional) matrix
field $A$. If the Fourier transform of $b$ is not sufficiently regular near
$0$ to make sense of (\ref{Helmholtzdivfree}), it may be possible to split off
the small frequencies manually. On $\mathbb{T}^d$ this does
not cause any problems and the identity (\ref{Helmholtz}) makes always sense.
Conversely, given an antisymmetric $A \in \mathcal{S}' (M ; \mathbb{R}^{d
\times d})$ it follows directly that
\begin{equation}
  b^i (A) = \nabla \cdummy A_i, \label{bofA}
\end{equation}
is such that $\nabla \cdummy b (A) = 0$.

We consider approximations of equation (\ref{MainSDE}) given by
\begin{equation}
  \mathd X_t^n = b^n (t, X_t^n) \mathd t + \sqrt{2} \mathd B_t, \label{approx}
\end{equation}
where $b^n = \rho^n \ast b$, with $\rho^n = n^{d + 1} \rho (n \cdummy)$ and
$\rho (t, x) = \rho_t (t) \rho_x (x)$, where $\rho_t, \rho_x$ are positive
mollifiers in space and time, respectively. We obtain the following
well-posedness results for {\tmem{energy solutions}}. (See Definitions
\ref{defenergysolutions} and \ref{defenergysolutionsEuclid} for the
definitions of energy solutions and (\ref{sqrtspace}) for the definition of
$B^0_{2 r, 1, 2}$).

\begin{theorem}
  \label{WellposednessBesov}Let $\mu \ll \tmop{Leb}_M$ be a probability
  measure on $M$. Let $b_i : \mathbb{R}_+ \rightarrow \mathcal{S}' (M,
  \mathbb{R}^d), i \in \{ 1, 2 \}$ such that $\nabla \cdummy b_1 \equiv 0$ and
  all of the following conditions i.-iii. are satisfied:
  \begin{enumerateroman}
    \item \label{supercriticalbesov}$b_1 = b (A)$, $A \in L^q_{\tmop{loc}}
    L^p$, or $b_1 \in L^q_{\tmop{loc}} B^{- 1}_{p, 2}$, where $p \in [2,
    \infty), q \in [2, \infty]$ and $p \left( \frac{1}{2} - \frac{1}{q}
    \right) > 1$;
    
    \item \label{supercriticalbesovunique}$b_1 = b (A)$ with $A \in
    L^{\infty}_{\tmop{loc}} L^{\infty}$ or $b_1 \in L^2_{\tmop{loc}} L^2$;
    
    \item \label{gradientassupmtionbesovuniqueness}$b_2 \in
    L^{\infty}_{\tmop{loc}} L^{\hat{r}} \cap L^4_{\tmop{loc}} B^0_{2 r, 1, 2}$
    where $r \in [d, \infty]$ and $\hat{r} \in (d, \infty]$ or $\hat{r} = d$
    and $d \geqslant 3$.
  \end{enumerateroman}
  Let $b = b_1 + b_2$. Then there exists a unique energy solution $X$ to
  equation (\ref{MainSDE}) with initial law $\mu$. The process $X$ is the
  limit in law of the sequence of solutions to (\ref{approx}), it is a Markov
  process and, if $b_2 = 0$, $\tmop{Leb}_M$ is an invariant measure for $X$.
\end{theorem}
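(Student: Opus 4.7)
The plan is a Stroock--Varadhan style program: existence by compactness of the mollified approximations \eqref{approx}, weak uniqueness via a resolvent equation for the generator, the Markov property as a consequence of uniqueness, and invariance of $\mathrm{Leb}_M$ by an integration by parts. Equation \eqref{approx} has strong solutions $X^n$ because $b^n$ is smooth. The quantitative engine is a uniform \emph{Aronson-type energy estimate}
\[
  \mathbb E\!\left[\int_0^T f(s, X_s^n)\,\mathd s\right] \le C(T, \mu) \|f\|_{Y},
\]
for a suitable Banach space $Y$ dual to the spaces in which $b_1$ and $b_2$ are assumed to live. Because $\nabla \cdummy b_1 \equiv 0$, the operator $b_1^n \cdummy \nabla$ is antisymmetric in $L^2(\mathrm{Leb}_M)$, so for $b_2 \equiv 0$ the estimate reduces to the Dirichlet form of $\mathLaplace$ together with $\mu \ll \mathrm{Leb}_M$; the critical perturbation $b_2^n$ is then absorbed via assumption \ref{gradientassupmtionbesovuniqueness} by a Khasminskii/Girsanov argument. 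Tightness of $(X^n)$ is immediate, and identification of the limit follows because the energy estimate makes $\int_0^t b^n(s, X_s^n)\,\mathd s$ Cauchy once $b^n \to b$ in the topology dual to $Y$.

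For weak uniqueness I would use the generator approach of \cite{GubinelliPerkowski20,GraefnerPerkowski23plus}. The task is to solve, for $\lambda$ large and $f$ in a dense class, the resolvent equation
\[
  (\lambda - \mathLaplace - b \cdummy \nabla) u = f
\]
in a function space whose topology controls the drift term in the way the energy estimate prescribes. The supercritical divergence--free part is accessed through the antisymmetric potential $A$ of assumption \ref{supercriticalbesov}, via the rewriting
\[
  \langle b_1 \cdummy \nabla u, v\rangle = -\langle A, \nabla u \otimes \nabla v\rangle,
\]
which only needs $A \in L^p$ rather than $b_1 \in L^p$; this is exactly the supercritical gain. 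The critical perturbation $b_2 \cdummy \nabla u$ is controlled by the extra regularity in \ref{supercriticalbesovunique}--\ref{gradientassupmtionbesovuniqueness}, presumably through paracontrolled and commutator estimates against the $B^0_{2r, 1, 2}$--type norms appearing in the statement. Once $u$ is produced with enough regularity to support an Itô formula along any energy solution, two such solutions must agree when tested against $f = (\lambda - L) u$, and density of this family yields weak uniqueness of one--dimensional marginals, which upgrades to uniqueness of the law by the standard induction on time slices.

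Uniqueness of the martingale problem at every $\mu \ll \mathrm{Leb}_M$ then gives the Markov property in the usual way, using that the regular conditional distribution of $X_s$ given $\mathcal F_s$ is absolutely continuous (a fact readable from the energy estimate). For $b_2 \equiv 0$, invariance of $\mathrm{Leb}_M$ follows by testing the generator against $f \in C_c^\infty$: $\int (\mathLaplace f + b_1 \cdummy \nabla f)\,\mathd x = 0$, the drift term vanishing after integration by parts since $\nabla \cdummy b_1 \equiv 0$, and this identity transfers to $X$ through the identification step. The main obstacle is the resolvent construction: closing a fixed point for $(\lambda - L)^{-1}$ in a genuinely supercritical space by simultaneously exploiting the antisymmetric potential $A$ and the $b_2$-regularity of \ref{gradientassupmtionbesovuniqueness}, and producing $u$ regular enough that Itô's formula applies along every energy solution before the Markov property has been established.
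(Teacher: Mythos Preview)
Your overall architecture (tightness of approximations, PDE-based uniqueness, Markov property by iteration) is correct, but you have misassigned the roles of the three assumptions, and this leads you to the wrong PDE program.

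Assumption \ref{supercriticalbesov} ($A\in L^q_{\mathrm{loc}}L^p$ with $p(\tfrac12-\tfrac1q)>1$) is used \emph{only} for tightness: via the It\^o trick it gives a Kolmogorov-type moment bound on $\int_s^t b_1^n(r,X^n_r)\,\mathd r$. Assumption \ref{supercriticalbesovunique} ($A\in L^\infty_{\mathrm{loc}}L^\infty$ or $b_1\in L^2_{\mathrm{loc}}L^2$) is the engine for \emph{uniqueness}. The paper does not solve a resolvent equation here; it solves the time-dependent backward equation $\partial_t u+\Delta u+b\cdot\nabla u=0$, $u(T)=u_T\in\mathcal S$, and the only a~priori information needed is the maximum principle $\|u\|_{L^\infty_T L^\infty}\le\|u_T\|_{L^\infty}$ together with the elementary energy estimate $u\in L^2_T H^1$, both of which are uniform in $b_1$ because $b_1\cdot\nabla$ is antisymmetric. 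The drift term is then placed in $L^2_T H^{-1}$ via the two trivial bounds
\[
\|\nabla\cdot(A\cdot\nabla u)\|_{H^{-1}}\lesssim\|A\|_{L^\infty}\|u\|_{H^1},\qquad
\|\nabla\cdot(b_1 u)\|_{H^{-1}}\lesssim\|b_1\|_{L^2}\|u\|_{L^\infty},
\]
which is exactly why assumption \ref{supercriticalbesovunique} takes the form it does. No paracontrolled or commutator machinery is involved; the $B^0_{2r,1,2}$ regularity of $b_2$ is used only in the existence step (it drives the Novikov bound for the Girsanov change of measure), while uniqueness consumes the simpler hypothesis $b_2\in L^\infty_{\mathrm{loc}}L^{\hat r}$ through a relative-boundedness lemma $\|(\lambda-\Delta)^{-1/2}(b_2\cdot\nabla u)\|_{L^2}\lesssim\varepsilon\|(\lambda-\Delta)^{1/2}u\|_{L^2}$.

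Your proposed route---a resolvent fixed point exploiting $\langle b_1\cdot\nabla u,v\rangle=-\langle A,\nabla u\otimes\nabla v\rangle$ with $A\in L^p$, $p<\infty$---does not close: with $\nabla u,\nabla v\in L^2$ one cannot pair against $A\in L^p$ for $p<\infty$, and in particular one cannot show $\langle u,\nabla\cdot(A\cdot\nabla u)\rangle=0$. This is precisely the obstruction the paper isolates for Theorem~\ref{Wellposednesslocaldiverging}, where it is overcome only under additional structural hypotheses on $A$ (the approximating sequence $g_n\to 1$ that excises the singular set). For Theorem~\ref{WellposednessBesov} the paper sidesteps the issue entirely by imposing \ref{supercriticalbesovunique} and using the backward Cauchy problem with the maximum principle, which supplies the missing $L^\infty$ bound on $u$.
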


The main ideas of a proof of Theorem \ref{WellposednessBesov} for
time-independent, divergence-free $b$ are sketched in the summer school
lecture notes {\cite{GraefnerPerkowski23}}. But the analysis there is more
similar to the one we use here for Theorem \ref{Wellposednesslocaldiverging}
below. The main ingredients for the uniqueness proof of Theorem
\ref{WellposednessBesov} are the energy estimate of energy solutions and the
following simple bounds for $b \cdummy \nabla u = \nabla \cdummy (A \cdummy
\nabla u) = \nabla \cdummy (b u)$
\begin{equation}
  \| \nabla \cdummy (A \cdummy \nabla u) \|_{H^{- 1}} \lesssim \| A 
  \|_{L^{\infty}} \| u \|_{H^1}, \qquad \| \nabla \cdummy (b u) \|_{H^{- 1}}
  \lesssim \| b  \|_{L^2} \| u \|_{L^{\infty}} . \label{easyestimate}
\end{equation}
Together with simple a priori estimates and the maximum principle for the PDE,
this estimate \ allow to link energy solutions with the corresponding
Kolmogorov backward equation. The gradient field $b_2$ is treated by
perturbative arguments: On the SDE side this is done by adding a drift using
Girsanov's theorem. On the PDE side we exploit Lemma~\ref{Laplacebound} which
shows that $b_2 \cdummy \nabla u$ is relatively bounded with respect to the
Laplacian.

\begin{remark}
  \label{remarkBesovtheorem}{\tmdummy}
  
  \begin{enumerateroman}
    \item It follows directly from the additive nature of the proofs leading
    to Theorem \ref{WellposednessBesov} that its statement is still true if $b
    = \sum_{i = 1}^n b^{(i)}$ such that each $b^{(i)}$ fulfills
    \ref{supercriticalbesov}. - \ref{gradientassupmtionbesovuniqueness}. for
    possibly different parameters.
    
    \item In particular, we can take $b_1 \in L^p_T B^{- \gamma}_{p, 1}$ for
    $p \geqslant \frac{2}{1 - \gamma}$ and $p \in [2, \infty], \gamma \in [0,
    1]$ in condition~\ref{supercriticalbesovunique}.: In the boundary case
    $\gamma = 0$ we have $b_1 \in L^2_T B^0_{2, 1} \subset L^2_T L^2$ and for
    $\gamma = 1$ we have $b_1 \in L^{\infty}_T B^{- 1}_{\infty, 1}$, so that
    with subtracting the first Littlewood-Paley block we have $A (b_1 -
    \Delta_{- 1} b_1) \in L^{\infty}_T B^0_{\infty, 1} \subset L^{\infty}_T
    L^{\infty}$, while we can add $\Delta_1 b_1 \in L^{\infty}_T C^{\infty}_b$
    to $b_2$. For $\gamma \in (0, 1)$ we can very likely use general
    interpolation space theory to obtain $L^p_T B^{- \gamma}_{p, 1} \subset
    L^2_T B^0_{2, 1} + L^{\infty}_T B^{- 1}_{\infty, 1}$, but we were unable
    to find a reference for this. However, by Theorem~1.64 in
    {\cite{Triebel2006}} we know that for a suitable wavelet basis
    $(\Psi^G_{j, m})$:
    \[ \| b_1 (t) \|_{B^s_{r, q}}^q \simeq \sum_{j = 0}^{\infty} 2^{j \left( s
       - \frac{d}{r} \right) q} \sum_{G \in G^j} \left( \sum_{m \in
       \mathbb{Z}^d} | \lambda^{j, G}_m (t) |^r \right)^{q / r}, \]
    where
    \[ b_1 (t) = \sum_{j = 0}^{\infty} \sum_{G \in G^j} \sum_{m \in
       \mathbb{Z}^d} \lambda^{j, G}_m (t) 2^{- j d / 2} \Psi^{j, G}_m . \]
    Then we define (with $\frac{0}{0} \assign 0$)
    \begin{eqnarray*}
      \mathcal{I}_1 (t) & = & \left\{ (j, G, m) : | \lambda^{j, G}_m (t) |
      \leqslant 2^{j (1 - \gamma)} \frac{M_{j, G} (t)}{\| b_1 (t) \|_{B^{-
      \gamma}_{p, 1}}} \| b_1 \|_{L^p_T B^{- \gamma}_{p, 1}} \right\},\\
      \mathcal{I}_2 (t) & = & \left\{ (j, G, m) : | \lambda^{j, G}_m (t) | >
      2^{j (1 - \gamma)} \frac{M_{j, G} (t)}{\| b_1 (t) \|_{B^{- \gamma}_{p,
      1}}} \| b_1 \|_{L^p_T B^{- \gamma}_{p, 1}} \right\},
    \end{eqnarray*}
    where $M_{j, G} (t) = (2^{- j d} \sum_{m \in \mathbb{Z}^d} | \lambda^{j,
    G}_m (t) |^p)^{\frac{1}{p}}$, and $b_1 = f_1 + f_2$ with
    \[ f_i (t) = \sum_{(j, G, m) \in \mathcal{I}_i (t)} \lambda^{j, G}_m (t)
       2^{- j d / 2} \Psi^{j, G}_m . \]
    It follows from a direct computation that $f_1 \in L^{\infty}_T B^{-
    1}_{\infty, 1}$ and $f_2 \in L^2_T B^0_{2, 1}$. For $p > \frac{2}{1 -
    \gamma}$ we simply use $L^p_T B_{p, 1}^{- \gamma} \subset L^p_T B_{p,
    1}^{- \hat{\gamma}}$, where $p = \frac{2}{1 - \hat{\gamma}}$ so that
    $\hat{\gamma} > \gamma$.
    
    \item Revisiting condition (\ref{criticalityrelation}), we see that
    \ref{supercriticalbesov}., \ref{supercriticalbesovunique}. and the
    preceding arguments allow the drift $b$ to be supercritical in $d
    \geqslant 2$. If we choose $q = \infty$, we need $d \geqslant 3$. The
    regularity for $b_2$ is critical so that in particular Theorem
    \ref{WellposednessBesov} implies a well-posedness result for a class of
    critical drift without restrictions on the divergence.
    
    \item \label{localizationforinfty}For $M =\mathbb{R}^d$ the case $b_1 = b
    (A)$ with $A \in L^{\infty}_T L^{\infty}$ is excluded in condition
    \ref{supercriticalbesov}., which requires some decay at infinity from $A$.
    This is for technical convenience, and we expect that it is possible to
    allow $p = \infty$ in condition \ref{supercriticalbesov}. by a
    localization argument: Roughly speaking, if $\eta_n$ is a smooth cutoff
    function supported in $B (0, 2 n)$ and which is constantly equal $1$ in $B
    (0, n)$, then we get from the It{\^o} trick (considering time-independent
    $A$ for simplicity)
    \begin{eqnarray*}
      &  & \mathbb{P} \left( \sup_{t \leqslant T} \left| \int_0^t b (A) (X_s)
      \mathd s \right| \geqslant \frac{n}{2}, | X_0 | + \sup_{t \leqslant T} |
      B_t | \leqslant \frac{n}{2} \right)\\
      &  & \leqslant \mathbb{P} \left( \sup_{t \leqslant T} \left| \int_0^t
      (b (\eta_n A)) (X_s) \mathd s \right| \geqslant \frac{n}{2} \right)\\
      &  & \leqslant \frac{\mathbb{E} [\sup_{t \leqslant T} | \int_0^t (b
      (\eta_n A)) (s, X_s) \mathd s|^p]}{\left( \frac{n}{2} \right)^p}\\
      &  & \lesssim_T \frac{\| A|_{B (0, 2 n)} \|_{{L^p} }^p}{n^p} \lesssim
      \| A \|_{L^{\infty}}^p n^{d - p},
    \end{eqnarray*}
    so we can localize the diffusion by taking $p > d$. To not distract from
    the main ideas of the work, we prefer to not give the details.
  \end{enumerateroman}
\end{remark}

\begin{remark}
  \label{rmk:modena}In~{\cite{Modena2018}}, Theorem~1.9, the authors use
  convex integration techniques to construct for every $d \geqslant 3$ and $p
  \in (1, \infty)$, $\tilde{p} \in [1, \infty)$ with ($p'$ is the H{\"o}lder
  conjugate exponent of $p$)
  \[ \frac{1}{p} + \frac{1}{\tilde{p}} > 1 + \frac{1}{d - 1}, \qquad p' < d -
     1, \]
  a divergence-free drift $b \in C ([0, T] ; W^{1, \tilde{p}} (\mathbb{T}^d)
  \cap L^{p'} (\mathbb{T}^d))$ and an initial density $\rho_0 \in L^p
  (\mathbb{T}^d)$ such that there are multiple weak solutions $\rho$ to the
  Fokker-Planck equation
  \[ \partial_t \rho = \Delta \rho - \nabla \cdummy (b \rho), \qquad \rho (0)
     = \rho_0 . \]
  Morally, this corresponds to weak non-uniqueness for the SDE. For $d
  \geqslant 4$ we can take $p = p' = 2$ and $\tilde{p} = 1$, and for $b \in C
  ([0, T] ; L^2 (\mathbb{T}^d))$ we obtain from
  Theorem~\ref{WellposednessBesov} the weak well-posedness of energy solutions
  with initial density $\rho \in L^1 (\mathbb{T}^d)$ (even less integrability
  than $\rho \in L^2 (\mathbb{T}^d)$) and even without assuming $b \in C ([0,
  T] ; W^{1, 2} (\mathbb{T}^d))$. Therefore, the energy estimate in the
  definition of energy solutions can be interpreted as a selection principle,
  an additional assumptions which guarantees uniqueness of weak solutions. See
  Definition~1 on p.54 of~{\cite{LeBris2019}} for a different type of
  selection principle for this problem. 
\end{remark}

For time-independent $b$ and corresponding to the approach from
(\ref{MainKBEb}), we obtain the following result (see Definition
\ref{defmartingalesol} for the definition of the martingale problem).

\begin{theorem}
  \label{Wellposednesslocaldiverging}Let $\mu \ll \tmop{Leb}$ be a probability
  measure on $M$. Let $b_i \in \mathcal{S}' (M, \mathbb{R}^d), i \in \{ 1, 2
  \}$ with $b_1 = b_1 (A)$ fulfill the following properties.
  \begin{enumerateroman}
    \item $A \in L^p$, $p \in (2, \infty]$, and there exists a sequence $(g_n)
    \subset L^{\infty}$ such that $\nabla g_n \in L^2 \cap L^{\frac{2 p}{p -
    2}}$ and
    \begin{enumeratealpha}
      \item $\nabla g_n \rightarrow 0$ weakly in $L^2$ and $\int g_n h
      \rightarrow \int h$ for all $h \in L^1$.\label{lemmaassumptionimain}
      
      \item $g_n A \in L^{\infty}$, $n \in
      \mathbb{N}$.\label{lemmaassumptioniimain}
      
      \item $A \cdummy \nabla g_n \rightarrow 0$ weakly in
      $L^2$.\label{lemmaassumptioniiimain}
    \end{enumeratealpha}
    \item $b_2 \in L^{\hat{r}} \cap B^0_{2 r, 1, 2} $with $r, \hat{r} \in [d,
    \infty]$, where for $\hat{r} = d$ we assume $d \geqslant 3$ and $\| b
    \|_{L^d} \leqslant K_d$ for a certain constant $K_d$ depending only on
    $d$.
  \end{enumerateroman}
  Let $b = b_1 + b_2$. Then there exists a unique energy solution $X$ to
  equation (\ref{MainSDE}) with initial law $\mu$. The process $X$ is the weak
  limit of the sequence of solutions to (\ref{approx}), it is a Markov
  process, and if $b_2 = 0$, $\tmop{Leb}$ is an invariant measure for $X$.
  Furthermore, $X$ is the unique solution to the martingale problem for the
  generator $(\mathcal{D}_{\max} (\mathcal{L}), \mathcal{L})$, where
  \begin{equation}
    \mathcal{D}_{\max} (\mathcal{L}) = \{ u \in H^1 : \mathcal{L} u \in L^2
    \}, \qquad \mathcal{L} u = \mathLaplace u + \nabla \cdummy (A \cdummy
    \nabla u) + b_2 \cdummy \nabla u, \qquad u \in H^1 . \label{maxdomaindef}
  \end{equation}
\end{theorem}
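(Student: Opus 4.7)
The plan is to reduce Theorem~\ref{Wellposednesslocaldiverging} to Theorem~\ref{WellposednessBesov} by approximating $A$ through $A_n := g_n A$, which by assumption~(b) lies in $L^\infty$. The associated divergence-free drift $b_1^n := b(A_n)$ then satisfies condition~\ref{supercriticalbesovunique}.\ of Theorem~\ref{WellposednessBesov}, so each mollified SDE \eqref{approx} with drift $b_1^n + b_2$ admits a unique energy solution $X^n$. The It\^o-trick/tightness argument underlying Theorem~\ref{WellposednessBesov} would then yield tightness of $\{X^n\}$ in $C([0,T];M)$ and, via the energy estimate, an energy solution $X$ to \eqref{MainSDE} as a weak limit. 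Conditions~(a) and~(c) are not needed for existence; they become crucial for identifying $X$ as the unique solution of the martingale problem for $(\mathcal{D}_{\max}(\mathcal{L}),\mathcal{L})$.

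For uniqueness I would follow the generator approach from~\cite{GubinelliPerkowski20,GraefnerPerkowski23}. The first step is to prove that $\mathcal{L}$ on $\mathcal{D}_{\max}(\mathcal{L})$ is $m$-dissipative on $L^2(M)$ and hence generates a strongly continuous contraction semigroup $(P_t)_{t\ge 0}$. For $b_2 = 0$ this reduces to the integration-by-parts identity $\int u\,\nabla\cdot(A\cdot\nabla u)\,\mathrm{d}x = 0$ for $u\in\mathcal{D}_{\max}(\mathcal{L})$, which exploits the antisymmetry of $A$. The gradient perturbation $b_2\cdot\nabla$ is relatively bounded with respect to $\Delta$ via Lemma~\ref{Laplacebound}; under the smallness condition $\|b\|_{L^d}\le K_d$ (needed only when $\hat r = d$) the relative bound is strictly less than one, so $\mathcal{L}$ remains $m$-dissipative by a Kato-type perturbation argument. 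Given this semigroup, I would solve the Kolmogorov backward equation $\partial_t u = \mathcal{L} u$, $u_0 = \varphi \in C_c^\infty$, and use an approximation/It\^o argument---combined with the energy estimate---to verify that \emph{any} energy solution $X$ solves the martingale problem for $(\mathcal{D}_{\max}(\mathcal{L}),\mathcal{L})$. Uniqueness of one-dimensional, and then (by the Markov property inherited from $(P_t)$) all finite-dimensional, marginals of $X$ follows. Lebesgue invariance when $b_2 = 0$ drops out of the formal symmetry of the symmetric part of $-\mathcal{L}$ in $L^2(\tmop{Leb})$.

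The hard part is justifying the identity $\int u\,\nabla\cdot(A\cdot\nabla u)\,\mathrm{d}x = 0$ for $A\in L^p$ with $p<\infty$, where $A\cdot\nabla u$ need not even be a well-defined $L^2$ function. This is precisely what assumptions~(a)--(c) are engineered to handle: testing against $g_n u$ and using antisymmetry of $A_n = g_n A \in L^\infty$, one obtains
\begin{equation*}
\int (g_n u)\,\nabla\cdot(A\cdot\nabla u)\,\mathrm{d}x = -\int (g_n A)\cdot\nabla u\cdot\nabla u\,\mathrm{d}x - \int u\,(A\cdot\nabla g_n)\cdot\nabla u\,\mathrm{d}x,
\end{equation*}
where the first term on the right vanishes by antisymmetry of $g_n A$ and the second vanishes in the limit because $A\cdot\nabla g_n\to 0$ weakly in $L^2$ by~(c), paired against $u\nabla u$ (whose integrability is controlled by $\nabla g_n \in L^2 \cap L^{2p/(p-2)}$). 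Condition~(a) ensures that the left-hand side converges to $\int u\,\nabla\cdot(A\cdot\nabla u)\,\mathrm{d}x$, closing the argument. Together with the passage to the limit for the gradient perturbation $b_2$ (treated by Girsanov on the SDE side and via Lemma~\ref{Laplacebound} on the PDE side, as in Theorem~\ref{WellposednessBesov}), this yields $m$-dissipativity of $\mathcal{L}$ and hence uniqueness.
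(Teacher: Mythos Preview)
Your overall plan aligns with the paper's approach---both use the generator/resolvent framework and exploit the sequence $(g_n)$ to justify the key identity $\langle u,\nabla\cdot(A\cdot\nabla u)\rangle=0$. However, your integration-by-parts step has a genuine gap. You claim that $\int u\,(A\cdot\nabla g_n)\cdot\nabla u\,\mathrm{d}x\to 0$ because $A\cdot\nabla g_n\to 0$ weakly in $L^2$, ``paired against $u\nabla u$.'' But for $u\in H^1$ one only has $u\nabla u\in L^1$ (or slightly better via Sobolev embedding), not $L^2$, so weak $L^2$ convergence of $A\cdot\nabla g_n$ does \emph{not} control this pairing. Your parenthetical about $\nabla g_n\in L^2\cap L^{2p/(p-2)}$ ensures $A\cdot\nabla g_n\in L^2$, not that $u\nabla u\in L^2$. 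The paper's Lemma~\ref{lemmainjective} fixes this by truncating: one tests the resolvent equation $(\lambda-\mathcal{L})u=0$ against $(f\circ u)g_n$ with $f\in C^\infty_c(\mathbb{R})$, $f(0)=0$, so that $f\circ u\in L^\infty$ and $(f\circ u)\nabla u\in L^2$; after sending $n\to\infty$ one lets $f_k\to\mathrm{id}$. Several further mollifications (of $g_n$ and of $u$) are needed to justify the intermediate integrations by parts, since a priori $A\cdot\nabla u\notin L^2$.

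Two smaller points. First, $m$-dissipativity requires not only $\langle u,\mathcal{L}u\rangle\le 0$ but the range condition $(\lambda-\mathcal{L})\mathcal{D}_{\max}(\mathcal{L})=L^2$, which you do not address; the paper obtains surjectivity by a compactness argument on the approximating resolvents $(\lambda-\mathcal{L}^n)^{-1}$ (Proposition~\ref{existencesemigroup}), and then deduces $\mathcal{D}=\mathcal{D}_{\max}$ from injectivity of $\lambda-\mathcal{L}$ on $H^1$ (Proposition~\ref{uniquenessabstract}). Second, your existence route via $A_n=g_nA$ is unnecessary and would not directly prove the stated claim that $X$ is the limit of solutions to the mollified equation~\eqref{approx}; the paper simply invokes Theorems~\ref{tightnesstorusBesov} and~\ref{tightnesstorusBesovEuclid} for $b^n=\rho^n\ast b$, which apply since time-independent $A\in L^p$ with $p>2$ already satisfies the tightness hypothesis $p(1/2-1/q)>1$ with $q=\infty$.
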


In the following lemma we give a more explicit sufficient condition for i.:

\begin{lemma}
  \label{uniquenesscompactset}Let $A \in L^p (M ; \mathbb{R}^{d \times d})$
  for $p \in [2, \infty]$ be antisymmetric $\tmop{Leb} - a.e.$ and suppose
  there exists a compact set $K$ such that for $\varepsilon > 0$ and
  $B^{\varepsilon} = \{ x : d (x, K) \leqslant \varepsilon \}$
  \begin{equation}
    \sup_{\varepsilon > 0} \varepsilon^{- 2} \tmop{Leb} (B^{\varepsilon}),
    \qquad \sup_{\varepsilon > 0} \varepsilon^{- 2} \int_{B^{\varepsilon}} | A
    |^2 < \infty, \label{divcond}
  \end{equation}
  and for all $\varepsilon > 0$,
  \begin{equation}
    A \1_{B_{\varepsilon}^c} \in L^{\infty} (M) . \label{condfinite}
  \end{equation}
  Then, there exists a sequence $g_n$ satisfying the assumptions from Theorem
  \ref{Wellposednesslocaldiverging}.
\end{lemma}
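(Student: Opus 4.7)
The plan is to build $g_n$ as a smooth cutoff which equals zero in a small neighbourhood of $K$ and one outside a slightly larger one, with the cutoff width chosen so that the $n$-dependent growth of $|\nabla g_n|$ is exactly compensated by the $\varepsilon^{-2}$-scaling provided by \eqref{divcond}. Concretely, I would fix a smooth non-decreasing function $\chi : [0, \infty) \to [0, 1]$ with $\chi \equiv 0$ on $[0, 1/2]$ and $\chi \equiv 1$ on $[1, \infty)$, and set $g_n (x) \assign \chi \bigl( n \, d(x, K) \bigr)$. Since $d(\cdummy, K)$ is $1$-Lipschitz, $g_n$ is Lipschitz with $|\nabla g_n| \leqslant \| \chi' \|_\infty \, n$ pointwise, supported in the annulus $B^{1/n} \setminus B^{1/(2n)}$. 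This support is contained in the bounded set $B^1$, hence $\nabla g_n$ automatically belongs to $L^2 \cap L^{2 p / (p - 2)}$ for every $n$.

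The non-weak conditions are then checked quickly. First, $g_n \in [0, 1] \subset L^\infty$. Second, $g_n A$ vanishes on $B^{1/(2n)}$ and is dominated pointwise by $|A \1_{(B^{1/(2n)})^c}|$ elsewhere, so \eqref{condfinite} gives $g_n A \in L^\infty$, which is condition~\ref{lemmaassumptioniimain} of Theorem~\ref{Wellposednesslocaldiverging}. Third, from the first bound in \eqref{divcond} one has $\tmop{Leb}(K) = 0$, hence $g_n \to 1$ Lebesgue-a.e.; combined with the uniform bound $|g_n| \leqslant 1$, dominated convergence yields $\int g_n h \to \int h$ for every $h \in L^1$, which is the second half of condition~\ref{lemmaassumptionimain}.

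The key step is the two weak $L^2$-convergences. Using $|\nabla g_n| \lesssim n \1_{B^{1/n}}$ together with both bounds in \eqref{divcond},
\[
\| \nabla g_n \|_{L^2}^2 \lesssim n^2 \tmop{Leb}(B^{1/n}) \lesssim 1, \qquad \| A \cdummy \nabla g_n \|_{L^2}^2 \lesssim n^2 \int_{B^{1/n}} |A|^2 \lesssim 1,
\]
so both sequences are uniformly $L^2$-bounded. Since in addition they are supported in $B^{1/n}$, whose Lebesgue measure vanishes, for every $\varphi \in L^2$ one has by Cauchy--Schwarz $| \langle \nabla g_n, \varphi \rangle_{L^2} | \leqslant \| \nabla g_n \|_{L^2} \, \| \varphi \1_{B^{1/n}} \|_{L^2}$ and an identical estimate for $A \cdummy \nabla g_n$; dominated convergence against $|\varphi|^2 \in L^1$ shows $\| \varphi \1_{B^{1/n}} \|_{L^2} \to 0$, which gives both weak convergences.

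There is no genuinely hard step: the difficulty of the lemma has been absorbed into the hypothesis \eqref{divcond}, which is precisely calibrated so that the $\varepsilon^{-2}$ prefactor kills the $n^2$-blowup of $|\nabla g_n|^2$ inherent to any Lipschitz distance-to-$K$ construction. The only mild subtlety is checking that condition \eqref{condfinite} suffices to control $g_n A$ uniformly on the transition annulus, which is why the cutoff is built so that its support avoids a neighbourhood of $K$ on which $A$ might fail to be bounded.
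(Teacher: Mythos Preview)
Your proof is correct and follows essentially the same approach as the paper: a smooth cutoff composed with the distance function $d(\cdummy,K)$, with the gradient blowup $\sim n$ balanced against the $\varepsilon^{-2}$ scaling in \eqref{divcond}. The only difference is that the paper additionally mollifies $d(\cdummy,K)$ to make $g_\varepsilon$ genuinely $C^\infty$ (deferring this to an appendix lemma), whereas you work directly with the Lipschitz function $\chi(n\,d(\cdummy,K))$; since the assumptions of Theorem~\ref{Wellposednesslocaldiverging} only require $g_n\in L^\infty$ with $\nabla g_n\in L^2\cap L^{2p/(p-2)}$, your simpler construction already suffices.
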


\begin{example}
  \label{Hausdorff}The assumptions (\ref{divcond}) and (\ref{condfinite}) are
  satisfied if there exist $s, \alpha, \delta > 0$ with
  \begin{equation}
    d - s \geqslant 2 (1 + \alpha), \label{conddimdiv}
  \end{equation}
  and a compact set $K$ such that with $B^{\varepsilon} = \{ x : d (x, K)
  \leqslant \varepsilon \}$
  \[ V (B^{\varepsilon}) \lesssim \varepsilon^{d - s}, \qquad A
     \1_{(B^{\varepsilon})^c} \in L^{\infty}, \qquad \left| A \1_{B^1} \right|
     \lesssim_{\tmop{Leb}} d (K, \cdummy)^{- \alpha} . \]
  Indeed, then
  \[ \varepsilon^{- 2} V (B^{\varepsilon}) \lesssim \varepsilon^{- 2 + d - s}
     \leqslant 1 \]
  and
  \[ \varepsilon^{- 2} \int_{B^{\varepsilon}} | A |^2 \lesssim \varepsilon^{-
     2 + d - s - 2 \alpha} \leqslant 1 . \]
  We can think of $K$ as a set of dimension at most $s$; the correct notion of
  (fractal) dimension is not important here.
  
  Let us consider the case where $K$ consists of a single point, i.e. $s = 0$
  and then $\alpha = (d - 2) / 2$ which satisfies (\ref{conddimdiv}). In terms
  of integrability, $A$ may behave like
  \[ | \mathbf{x} |^{- \alpha} \nin L^{\frac{d}{\alpha}}_{\tmop{loc}} =
     L^{\frac{2 d}{d - 2}}_{\tmop{loc}}, \qquad | \mathbf{x} |^{- \alpha} \in
     L^{\frac{2 d}{d - 2} -}_{\tmop{loc}} . \]
  For $d \rightarrow \infty$, the exponent $\frac{2 d}{d - 2}$ gets
  arbitrarily close to $p = 2$. Hence, for any $p > 2$ (in sufficiently high
  dimensions) we find a proper element $b \in W^{- 1, p -} = \bigcap_{\delta >
  0} W^{- 1, p - \delta}$, where $W^{- 1, \bar{p}} = \left( W^{1, \left( 1 -
  \frac{1}{\bar{p}} \right)^{- 1}} \right)^{\ast}$, such that existence and
  uniqueness in law holds for the drift $b$. This goes far beyond the regime
  treated in Theorem \ref{WellposednessBesov}, since at regularity $\gamma = -
  1$ we would need $p = \infty$.
  
  The second condition in \eqref{divcond} resembles a local form of $L^{2, 2}$
  Morrey regularity but ultimately the condition is different from those
  imposed for Morrey regularity. See the discussion in
  Example~\ref{counterexampleMorrey} in the appendix for a compactly supported
  counterexample that satisfies the assumptions in Lemma
  \ref{uniquenesscompactset} but is not in $L^{2, 2}$.
\end{example}

To prove uniqueness in Theorem \ref{Wellposednesslocaldiverging} we construct
a (a priori non-unique) domain $\mathcal{D}$ for the generator $\mathcal{L} u
= \mathLaplace u + \nabla \cdummy (A \cdummy \nabla u) + b_2 \cdummy \nabla u$
by abstract arguments. Then, we show that smooth functions are in a certain
sense a core for this operator (see Proposition \ref{uniquenessabstract}),
which allows us to use the energy estimate of energy solutions to show that
any energy solution is a solution to the martingale problem for $(\mathcal{D},
\mathcal{L})$. Uniqueness then follows from uniqueness of the martingale
problem for a fixed domain $\mathcal{D}$. An alternative approach is to
directly show that all candidate domains $\mathcal{D}$ are equal to the
maximal domain (\ref{maxdomaindef}). Eventually both approaches agree and
reduce to showing for all sufficiently large $\lambda > 0$ the simple
statements
\begin{equation}
  \begin{array}{lll}
    (\lambda - \mathcal{L}^{\ast}) u = 0 \qquad & \Leftrightarrow & \qquad u =
    0,\\
    (\lambda - \mathcal{L}) u = 0 \qquad & \Leftrightarrow & \qquad u = 0,
  \end{array} \label{reducedstatement}
\end{equation}
for all $u \in H^1$, see Proposition \ref{uniquenessabstract} and Lemma
\ref{lemmainjective}. We use the same strategy for proving uniqueness in the
infinite-dimensional setting in {\cite{GraefnerPerkowski23plus}} (see also
{\cite{GraefnerPerkowski23}}) to study solutions to critical singular SPDEs
with Gaussian reference measure. To prove (\ref{reducedstatement}) we would
like to test the equations against $u$ and use the antisymmetry of $\nabla
\cdummy (A \cdummy \nabla u)$ to show that $\langle \nabla \cdummy (A \cdummy
\nabla u), u \rangle = 0$. However, for $p \in (2, \infty)$ the integrability
of $A \in L^p$ \ and $\nabla u \in L^2$ is not sufficient to directly justify
this step, and we approximate the integral by multiplying with a sequence $g_n
\rightarrow 1$ that removes the singularities of $A$ in a way which allows us
to pass to the limit (see Figure \ref{picture}). This is the content of Lemmas
\ref{uniquenessabstract}, \ref{lemmainjective} and \ref{uniquenesscompactset}.

\
\begin{figure}
\centering
\includegraphics[width=6.65174471992654cm,height=7.24550701823429cm]{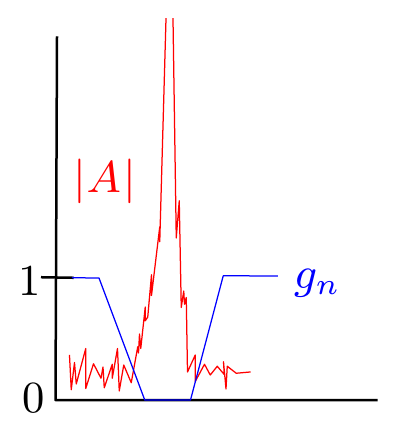}\caption{\label{picture}Illustration
of the sequence $g_n \rightarrow 1$ from Theorem
\ref{Wellposednesslocaldiverging} which removes the singular set of $A$.}
\end{figure}

\begin{remark}
  Since Lemma \ref{uniquenessabstract} reduces the problem of proving
  uniqueness to the simple sufficient condition (\ref{reducedstatement}), we
  expect that it is possible to find more general assumptions on $b$ than the
  one from Theorem \ref{Wellposednesslocaldiverging} which still yield
  uniqueness. Another interesting direction is to consider supercritical
  singular SPDEs with Gaussian invariant measure with a similar structure to
  the finite-dimensional equation (\ref{MainSDE}). An example would be the
  {\tmem{fractional stochastic Burgers equation}} $u : \mathbb{R}_+ \times
  \mathbb{T} \rightarrow \mathbb{R}$,
  \[ \partial_t u = - (- \Delta)^{\theta} u + \partial_x u^2 + \sqrt{2} (-
     \Delta)^{\theta / 2} \xi, \]
  with $\theta > 1 / 2$ and $\xi$ being space-time white noise, which has the
  law $\mu$ of spatial white noise as an invariant measure. Existence of
  energy solutions for this equation is shown in {\cite{GubinelliJara13}}.
  Uniqueness in the subcritical regime $\theta > 3 / 4$ is shown in
  {\cite{GubinelliPerkowski20}}, and a proof of uniqueness in the critical
  case $\theta = 3 / 4$ is contained in the upcoming work
  {\cite{GraefnerPerkowski23plus}}, based on a formulation of
  (\ref{reducedstatement}) in the infinite-dimensional setting. In future work
  we will consider an infinite-dimensional operator-valued equivalent of the
  matrix field $A$ corresponding to the ``divergence-free'' (w.r.t. to $\mu$)
  drift $b (u) = \partial_x u^2$ and to explore whether a similar condition as
  in (\ref{divcond}) can be used to show (\ref{reducedstatement}) in the
  supercritical regime $\theta \in (1 / 2, 3 / 4)$.
\end{remark}

\begin{example}[Brownian particle in the curl of the log-regularized Gaussian
free field]
  \hphantom{Another} Another motivation for our work are the recent results on
  $\sqrt{\log t}$ superdiffusivity of a Brownian particle in the curl of the
  Gaussian free field. Let $\xi$ be a massless two-dimensional Gaussian free field, independent of
  $B$, and consider the (smooth, random) SDE on $\mathbb{R}^2$
  \[ \mathd Y_t = \nabla^{\perp} (\xi \ast \rho) (Y_t) \mathd t + \mathd B_t,
  \]
  where $\nabla^{\perp} = (\partial_2, - \partial_1)$. This model was studied
  in {\cite{Toth2012}} and more recently in
  {\cite{CannizzaroHaunschmidSibitzToninelli21}} and
  {\cite{ChatzigeorgiouMorfeOttoWang23}}. The authors show that the variance
  grows superdiffusively, essentially $\mathbb{E} [X_t^2] \simeq t \sqrt{\log
  t}$, as $t \rightarrow \infty$. By the scaling invariance of the Gaussian
  free field and of $B$, this translates to an equivalent statement at fixed
  scales as we remove the ultraviolet cutoff: The SDE
  \[ \mathd Y_t^n = \nabla^{\perp} (\xi (\omega) \ast \rho_n) (Y^n_t) \mathd t
     + \mathd B_t  \]
  satisfies $\mathbb{E} [| Y_t^n |^2] \simeq t \sqrt{\log n}$, suggesting that
  there are no subsequential limits for $n \rightarrow \infty$. Note that
  locally a.s. $\nabla^{\perp} \xi \in B^{- 1 - \varepsilon}_{\infty, \infty}$
  for any $\varepsilon > 0$ and that $\nabla \cdummy \nabla^{\perp} \xi
  (\omega) \equiv 0$.
  
  In the recent work~{\cite{Yang2024}}, inspired
  by~{\cite{CannizzaroErhardToninelli21,CannizzaroGubinelliToninelli23}}, the authors show that in the weak disorder limit
  \[ \mathd X^n_t = \frac{1}{\sqrt{\log \frac{1}{n}}} \nabla^{\perp} (\xi \ast
     \rho_n) (X^n_t) \mathd t + \sqrt{2} \mathd B_t, \]
  $X^n$ converges to a two-dimensional Brownian motion $W$ with increased
  variance, $\mathbb{E} [| W_t^i |^2] > 2 t$ for $i = 1, 2$. On the other hand,~\cite{Armstrong2024} derive a quenched central limit theorem for $|\log \varepsilon^2|^{-1/4}\varepsilon X_{t/\varepsilon^2}$.

  Here we take a
  different logarithmic regularization and we show below that slightly
  regularizing $\nabla^{\perp} \xi$ by considering $\nabla^{\perp} \log ((1 -
  \mathLaplace)^{- \alpha}) \xi$ with $\alpha > 1$ (this can be improved to
  $\alpha > \frac{1}{2}$) leads to a well-posed equation. It would be
  interesting to understand the limit (if existent) of
  \[ \mathd X^n_t = \nabla^{\perp} \log ((1 - \mathLaplace)^{- 1 / 2}) (\xi
     \ast \rho_n) (X^n_t) \mathd t + \sqrt{2} \mathd B_t . \]
  To see the claimed well-posedness for $\alpha > 1$, we consider for
  simplicity a {\tmem{periodic}} Gaussian free field $\xi$ on $\mathbb{T}^2$
  with $\mathcal{F} \xi (0) = 0$ (without loss of generality) and the quenched
  SDE
  \begin{equation}
    \mathd X_t = \nabla^{\perp} \log ((1 - \mathLaplace)^{- \alpha}) \xi
    (\omega) (X_t) \mathd t + \mathd B_t, \label{logregularizedGFfield}
  \end{equation}
  where $\alpha > 0$, and $\log ((1 - \mathLaplace)^{- \alpha})$ is a Fourier
  multiplier. We have to compute the regularity of realizations of $b =
  \nabla^{\perp} \log (1 - \mathLaplace)^{- \alpha} \xi$. Note that $A_{1 1} =
  A_{2 2} = 0$ and
  \[ A_{12} = - A_{2 1} = - \log (1 - \mathLaplace)^{- \alpha} \xi . \]
  Hence, using the embedding of $B^0_{\infty, 1}$ into the continuous bounded
  functions, it suffices to show that $\log (1 - \mathLaplace)^{- \alpha} \xi
  \in B^0_{\infty, 1}$ for all $\alpha > 1$. This is shown in Lemma~\ref{lem:GFF-reg} in the appendix. As a consequence of Theorem
  \ref{Wellposednesslocaldiverging} we thus obtain the following {\tmem{quenched}}
  result.
\end{example}

\begin{corollary}
  In the context of the previous example let $\alpha > 1$ and let $\mu \ll
  \tmop{Leb}_{\mathbb{T}^d}$ be a probability measure on $\mathbb{T}^d$. Then
  there exists a unique energy solution $X$ to equation
  (\ref{logregularizedGFfield}) with initial law $\mu$. The process $X$ is the
  weak limit of the (respective) sequence of solutions to (\ref{approx}), it
  is a Markov process and $\tmop{Leb}_{\mathbb{T}^d}$ is an invariant measure
  for $X$. Furthermore $X$ is the unique solution to the martingale problem
  for the generator $(\mathcal{D}_{\max} (\mathcal{L}), \mathcal{L})$, where
  $\mathcal{D}_{\max} (\mathcal{L}) = \{ u \in H^1 : \mathcal{L} u \in L^2 \}$
  and
  \begin{equation}
    \mathcal{L} u = \frac{1}{2} \mathLaplace u + \nabla \cdummy (- \log (1 -
    \mathLaplace)^{- \alpha} \xi (\omega) \nabla^{\perp} u), \qquad u \in H^1
    .
  \end{equation}
\end{corollary}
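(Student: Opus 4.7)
The plan is to verify, almost surely in $\omega$, the hypotheses of Theorem \ref{Wellposednesslocaldiverging} for the divergence-free drift
\[ b(\omega) = \nabla^{\perp} \log(1-\Delta)^{-\alpha}\xi(\omega), \qquad b_{2}\equiv 0, \]
on $\mathbb{T}^{2}$, and then simply invoke the conclusion of that theorem. Since $b_{2}=0$, condition ii.\ is automatic (take any $r,\hat r\in[d,\infty]$), so all the work is concentrated on condition i.: producing an antisymmetric matrix field $A$ that realises $b$ and a sequence $(g_{n})$ with the required four properties.

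First I would identify $A(\omega)$ exactly as announced in the preceding example, namely
\[ A_{12}(\omega) = -A_{21}(\omega) = -\log(1-\Delta)^{-\alpha}\xi(\omega), \qquad A_{11}=A_{22}=0, \]
and check directly from the formula $b^{i}(A)=\nabla\cdot A_{i}$ (column $i$) that $b(A)=\nabla^{\perp}\log(1-\Delta)^{-\alpha}\xi(\omega)$, matching the drift in \eqref{logregularizedGFfield}. By Lemma \ref{lem:GFF-reg} (cited in the preceding example and proved in the appendix), for every $\alpha>1$ we have $\log(1-\Delta)^{-\alpha}\xi(\omega)\in B^{0}_{\infty,1}(\mathbb{T}^{2})$ almost surely; since $B^{0}_{\infty,1}\hookrightarrow L^{\infty}$ we obtain $A(\omega)\in L^{\infty}(\mathbb{T}^{2};\mathbb{R}^{2\times 2})$ almost surely, which is much stronger than needed.

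Next I would verify condition i.\ of Theorem \ref{Wellposednesslocaldiverging} with $p=\infty$, for which $\tfrac{2p}{p-2}=2$ so that the integrability requirement on $(g_{n})$ collapses to $\nabla g_{n}\in L^{2}$. Since $A(\omega)$ is already in $L^{\infty}$, the trivial choice $g_{n}\equiv 1$ satisfies all four conditions: $\nabla g_{n}\equiv 0\in L^{2}$; $g_{n}A=A\in L^{\infty}$ gives (\ref{lemmaassumptioniimain}); the duality $\int g_{n}h=\int h$ for every $h\in L^{1}$ and the weak convergence $\nabla g_{n}\to 0$ in $L^{2}$ give (\ref{lemmaassumptionimain}); and $A\cdot\nabla g_{n}\equiv 0$ trivially converges weakly to $0$ in $L^{2}$, giving (\ref{lemmaassumptioniiimain}). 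Hence, for almost every $\omega$, Theorem \ref{Wellposednesslocaldiverging} applies and yields in one stroke the existence and uniqueness of an energy solution with initial law $\mu$, its characterisation as the weak limit of solutions to the mollified equations \eqref{approx}, the Markov property, invariance of $\operatorname{Leb}_{\mathbb{T}^{2}}$ (since $b_{2}=0$), and the martingale problem characterisation on $(\mathcal{D}_{\max}(\mathcal{L}),\mathcal{L})$ stated in the corollary, modulo an obvious scaling to account for the factor $\tfrac12$ in front of $\Delta$.

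There is no real obstacle in this plan: the only substantive analytic input is Lemma \ref{lem:GFF-reg}, which determines the threshold $\alpha>1$, and once that is in hand the constant choice $g_{n}\equiv 1$ short-circuits the delicate cut-off construction of Lemma \ref{uniquenesscompactset} entirely, reducing the corollary to a direct specialisation of Theorem \ref{Wellposednesslocaldiverging}.
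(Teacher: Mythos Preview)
Your proposal is correct and matches the paper's approach exactly: the corollary is stated there as a direct consequence of Theorem \ref{Wellposednesslocaldiverging}, with the only analytic input being Lemma \ref{lem:GFF-reg} to place $A(\omega)\in L^\infty$ almost surely. The trivial choice $g_n\equiv 1$ is indeed all that is needed once $A\in L^\infty$, and your remark about the harmless $\tfrac12$ normalisation is well taken.
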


In {\cite{GraefnerPerkowski23plus}} a proof of {\tmem{annealed}}
well-posedness of equation (\ref{logregularizedGFfield}) is given for $\alpha
> 1 / 2$. In fact it is even shown that almost surely $(1 - \mathcal{L})$ is
injective on $H^1$ which implies {\tmem{quenched}} well-posedness for all
$\alpha > 1 / 2$ by Proposition \ref{uniquenessabstract}.

\begin{example}[$N$-particle system]
  Consider a system of $N$ $d$-dimensional particles $(X^k)_{k=1,\dots,N}$ which are driven by
  independent Brownian motions $(B^k)_{k=1,\dots,N}$ and which interact through some
  field $b : \mathbb{R}_+ \rightarrow \mathcal{S}' (M ;
  \mathbb{R}^d)$. This corresponds to the SDE
  \begin{equation}
    \mathd X^k_t = \sum_{k \neq l} b (t, X_t^k - X_t^l) \mathd t + \mathd B^k_t .
    \label{Nparticle}
  \end{equation}
  In {\cite{HaoZhang23}} this example is treated for divergence-free $b \in H^{-1}_\infty(\mathbb{R}^d)$. If we define $b_1$ in terms of some
  antisymmetric $A$, we can obtain $A^N: \mathbb{R}^{dN} \to \mathbb{R}^{dN \times dN}$ with the Helmholtz decomposition  as $A_{(k - 1) d
  + i, (l - 1) d + j}^N(x) =\mathLaplace_{M}^{-1}\nabla(\nabla\cdot A_j)^i(x^l - x^k)-\mathLaplace_{M}^{-1}\nabla(\nabla\cdot A_i)^j(x^k - x^l)$ where $k, l \in \{ 1, \ldots, N
  \}$ and $ i,j \in \{ 1, \ldots, d \}$.
  
  Consider the case $M =\mathbb{T}^d$ and assume that $b$ satisfies the conditions of Theorem
  \ref{WellposednessBesov}, where we replace \ref{supercriticalbesovunique} by the condition $b_1 \in L^p_T B^{- \gamma}_{p, 1}$ for
    $p \geqslant \frac{2}{1 - \gamma}$ and some $p \in [2, \infty], \gamma \in [0,
    1]$ from Remark \ref{remarkBesovtheorem}. Then the drift
  $b^N$ corresponding to (\ref{Nparticle}) satisfies the conditions from Theorem 
  \ref{WellposednessBesov} as well and thus energy solutions are well-posed. This follows from the fact that via a coordinate transformation it suffices to analyze the regularities of $b
  \circ \pi_l$ and  $\mathLaplace_{\mathbb{T}^d}^{-1}\nabla(\nabla\cdot A_j) \circ \pi_l$, $j=1,...,d$, where $\pi_l : \mathbb{T}^{N d}
  \rightarrow \mathbb{T}^d$ with $x \mapsto x^l$ is the projection on the $l$-th
  particle. Then we can use that $\| u \circ \pi_l \|_{L^r (\mathbb{T}^{d N})} \lesssim \|
  u \|_{L^r (\mathbb{T}^d)}$ and for any (polynomially growing) function $D :
  \mathbb{Z}^{N d} \rightarrow \mathbb{C}$ and the corresponding Fourier
  multiplier $\sigma (D)$ it holds $\sigma (D) u = [\sigma (D |_l \nobracket)
  u] \circ \pi_l$, where $D |_l \nobracket : \mathbb{Z}^d \rightarrow
  \mathbb{C}$ is defined by $D |_l \nobracket (k) = D (k_l)$. Moreover, $\mathLaplace_{\mathbb{T}^d}^{-1}\nabla(\nabla\cdot A_j)$ is a bounded operators on $L^r$ for $r\in(1,\infty)$ (see the proof of Theorem \ref{tightnesstorusBesov}). Thus all
  regularities of $b$ from Theorem \ref{WellposednessBesov} translate directly
  to the drift $b^N : \mathbb{R}_+ \rightarrow \mathcal{S}' (\mathbb{T}^{N d}
  ; \mathbb{R}^{N d})$.
  
  If $A$ is an even function satisfying the assumptions from Lemma~\ref{uniquenesscompactset}, we can take\\ $A_{(k - 1) d
  + i, (l - 1) d + j}^N(x) =A_{i,j}(x_l-x_k)$ and $A^N$ satisfies the same conditions with $K^N = \{ x \in \mathbb{T}^{d N} : \exists k \neq l : x^k - x^l \in K
  \},$ so that energy solutions are well-posed due to Theorem \ref{Wellposednesslocaldiverging}.
  
  For $M =\mathbb{R}^d$ the only problem arises from the fact that $u
  \circ \pi_l$ does not inherit global integrability from $u$, which might be addressed through a localization argument similarly to the one
  from \ref{localizationforinfty} in Remark \ref{remarkBesovtheorem}.
\end{example}

\paragraph*{Relation with {\cite{HaoZhang23}}.}

In {\cite{HaoZhang23}} the authors show uniqueness of the limit
of the solutions to (\ref{approx}) if $\nabla \cdummy b \equiv 0$ and for example (taking $L^\infty$ time regularity for simplicity) $b \in
(L^{\infty}_T B^{- 1}_{\infty, 2} + L^2_T L^2) \cap L^{\infty}_T H^{- 1, p}$ with $p > d$, and $\frac{\mathd\mu}{\mathd \operatorname{Leb}_M} \in L^2$. This is outside of the scope of Theorem \ref{WellposednessBesov} (and the
succeeding arguments) since we require the Besov parameter $q = 1$, that is $(b \in
L^{\infty}_T B^{- 1}_{\infty, 1} + L^2_T L^2) \cap L^\infty_T B^{-1}_{p,2}$ with $p>2$. Conversely, our assumptions are dimension-independent and we do not require $b \in L^\infty_T H^{-1,p}$ with $p>d$, which e.g. in $d \ge 4$ rules out generic elements of $L^\infty_T L^2 \subset L^2_T L^2 \cap L^\infty_T B^{-1}_{2+\frac{4}{d-2},2}$, and also we do not need $L^2$ integrability of the initial density. Furthermore, \cite{HaoZhang23} do not consider perturbations of the divergence-free case.

Conceptually, our work is related to~\cite{HaoZhang23} as both rely on PDE methods for proving uniqueness. However, while~\cite{HaoZhang23} derive stronger results on the PDE under the assumption $L^\infty_T B^{-1}_{p,2}$ with $p>d$ or similar, our approach only relies on  generic a priori estimates for the PDE. We obtain  uniqueness from properties of the solutions to the SDE such as incompressibility and energy estimates. Moreover, with the latter we identify a selection
principle for weak solutions which guarantees uniqueness even if they are not
given as limits of approximations.

\paragraph{Plan of the paper.}We write $M$ instead of $\mathbb{T}^d$ or
$\mathbb{R}^d$ if a statement (or proof) is valid for both cases. In Section
\ref{existenceperiodic} we show existence of energy solutions in
$\mathbb{T}^d$, and in Section \ref{Euclideanset} the existence of energy
solutions in $\mathbb{R}^d$. In Section \ref{Besovcase} we prove uniqueness of
energy solutions in $M$ under the integrability and regularity assumptions on
$b$ from Theorem \ref{WellposednessBesov} and in Section
\ref{assumptionsonAcase} we prove uniqueness of energy solutions in $M$ under
the structural assumptions on $A$ in Theorem
\ref{Wellposednesslocaldiverging}.

\paragraph{Notation.}We denote by $\mathbb{T}^d =\mathbb{R}^d /\mathbb{Z}^d$
the $d$-dimensional torus and identify functions on $\mathbb{T}^d$ with
periodic functions on $\mathbb{R}^d$. As usual we write $\mathcal{F} u (k) =
\hat{u} (k) = \langle u, e^{- 2 \pi i k \cdummy} \rangle$ for the Fourier
transform at $k \in \mathbb{Z}^d$ of a distribution $u \in \mathcal{S}'
(\mathbb{T}^d)$. For $\alpha \in \mathbb{R}$, we define the action of the
fractional Laplacian $\mathcal{F} (- \mathLaplace)^{\alpha} u (k) = 1_{| k | >
0} (2 \pi | k |)^{2 \alpha} \hat{u} (k)$. For numbers $a, b$ we write $a
\lesssim b$ if $a \leqslant C b$ for some constant $C > 0$ which is irrelevant
for the discussion. To emphasize the dependence of $C$ on some parameter $T$,
we write $a \lesssim_T b$. For equality up to such a constant we write $a
\simeq b$. When discussing stochastic processes, we will always implicitly
assume that all objects are defined on some complete probability space
$(\Omega, \mathcal{F}, \mathbb{P})$ which is rich enough to allow the relevant
objects, like e.g. Brownian motion.

\section{Construction of energy solutions}

In our finite-dimensional setting we obtain existence of energy solutions on
$\mathbb{T}^d$ in Theorem \ref{tightnesstorusBesov} by the usual
forward-backward martingale argument under the invariant measure that allows
to cancel the antisymmetric part of the generator corresponding to the
divergence-free part of the drift. This yields the central It\^{o} trick bound
(\ref{Itotrick}) for additive functionals. We generalize this argument by
allowing non-divergence-free perturbations (which are not antisymmetric on the
generator level) of the drift via an application of Girsanov's theorem. On
$\mathbb{R}^d$ we obtain existence in Theorem \ref{tightnesstorusBesovEuclid}
by an approximation with tori $\mathbb{T}^{d, L}$ of length $L \rightarrow
\infty$.

\subsection{Existence of energy solutions in
$\mathbb{T}^d$}\label{existenceperiodic}

We work with nonhomogeneous Besov spaces
\[ B^s_{p, q} (M) = \left\{ u \in \mathcal{S}' (M) : \sum_j^{\infty} 2^{s j q}
   \| \mathLaplace_j u \|_{L^p}^q < \infty \right\}, \]
where $p, q \in [1, \infty], s \in \mathbb{R}$ and $\mathLaplace_j u = \phi_j
\ast u, \phi_j = \mathcal{F}^{- 1} \varphi_j$ is the $j$th Littlewood-Paley
block associated to $u$. The corresponding norms are denoted by $\| \cdummy
\|_{B^s_{p, q}}$. Standard references on Besov spaces are
{\cite{BahouriCheminDanchin11}} on $\mathbb{R}^d$ and {\cite{Triebel83}} on
$\mathbb{T}^d$. However, we will often refer to the nice lecture notes
{\cite{vanZuijlen22}} which contain all necessary results on $\mathbb{R}^d$
and which carry over to the periodic setting in a canonical way. We will
repeatedly use the fact that $\rho^n \ast b \rightarrow b$ in $L^q_T L^p$, or
in $L^q_T B^s_{p, r}$, respectively, if $b \in L^q_T L^p$, or $b \in L^q_T
B^s_{p, r}$, respectively, where $p, q, r \in [1, \infty)$. For later use we
define for $p \in [1, \infty]$
\begin{equation}
  B^0_{p, 1, 2} \assign \left\{ u \in B^0_{p, 1} : \| u \|_{B^0_{p, 1, 2}}^2
  \assign \sum_j \| \Pi_{\geqslant j} u \|_{B^0_{p, 1}}^2 < \infty \right\},
  \label{sqrtspace}
\end{equation}
where $\Pi_{\geqslant j} u \assign \sum_{i \geqslant j} \mathLaplace_j u$. We
note that $B^0_{p, 1, 2} \longhookrightarrow B^0_{p, 1} \longhookrightarrow
L^p$ (see e.g. {\cite{vanZuijlen22}} for the second embedding). See Appendix
$\ref{appendixA}$ for some auxiliary results concerning $B^0_{p, 1, 2}$. For
$b \in L^{\infty} (\mathbb{R}_+ \times \mathbb{T}^d)$, we say that a process
$X$ on $\mathbb{T}^d$ solves
\[ \mathd X_t = b (t, X_t) \mathd t + \sqrt{2} \mathd B_t \]
if $X = Y \tmop{mod} \mathbb{Z}^d$, where $Y$ is an $\mathbb{R}^d$-valued
process solving
\[ \mathd Y_t = b (t, Y_t) \mathd t + \sqrt{2} \mathd B_t, \]
with a Brownian motion $B$ on $\mathbb{R}^d$.

The following It{\^o} trick estimate is one of the standard tools for energy
solutions.

\begin{lemma}[It{\^o} trick]
  \label{Itotricklemma}For $x_0 \in \mathbb{T}^d$, let $X : \Omega \times
  \mathbb{R}_+ \rightarrow \mathbb{T}^d$ solve
  \[ \mathd X_t = b (t, X_t) \mathd t + \sqrt{2} \mathd B_t, \qquad X_0 = x_0
  \]
  where $B$ is a Brownian motion and $b \in C (\mathbb{R}_+ ; C^{\infty}
  (\mathbb{T}^d))$ is such that $\nabla \cdummy b \equiv 0$. Then, $X$ is an
  ergodic $\mathbb{T}^d$-valued Markov process with unique invariant measure
  $\tmop{Leb}_{\mathbb{T}^d}$. For $X_0 \sim \tmop{Leb}_{\mathbb{T}^d}, f \in
  C^{\infty} ([0, T] \times \mathbb{T}^d, \mathbb{R})$ and for all $p \in [2,
  \infty), q \in [2, \infty]$ we have
  \begin{equation}
    \mathbb{E} \left[ \sup_{t \leqslant T} \left| \int_0^t \mathLaplace f (s,
    X_s) \mathd s \right|^p \right] \lesssim T^{p \left( \frac{1}{2} -
    \frac{1}{q} \right)} \| \nabla f \|^p_{L^q_T L^p (\mathbb{T}^d)},
    \label{Itotrick}
  \end{equation}
  where the implicit constant is independent of $b$.
\end{lemma}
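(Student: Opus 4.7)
The plan is to combine a forward-backward martingale decomposition with the Burkholder-Davis-Gundy (BDG) inequality, exploiting the divergence-freeness of $b$ to force the convective term $b \cdummy \nabla$ to cancel against its time-reversed counterpart. This writes $2 \int_0^t \mathLaplace f (s, X_s) \mathd s$ as a sum of two martingales whose quadratic variations involve $\nabla f$ but not $b$, which is exactly what makes the constant in the stated bound independent of $b$.

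The existence, Markov, and invariance assertions are classical. Since $b$ is smooth and bounded on the compact torus the SDE admits a unique strong solution $X$, which is a smooth Feller-Markov diffusion. The formal $L^2 (\tmop{Leb}_{\mathbb{T}^d})$-adjoint of the generator $\mathcal{L}_t = \mathLaplace + b (t, \cdummy) \cdummy \nabla$ is $\mathcal{L}_t^{\ast} = \mathLaplace - b (t, \cdummy) \cdummy \nabla$ by $\nabla \cdummy b \equiv 0$, so $\mathcal{L}_t^{\ast} 1 \equiv 0$ and $\tmop{Leb}_{\mathbb{T}^d}$ is a stationary solution of the Fokker-Planck equation. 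Strong Feller (from smooth coefficients and nondegenerate noise) combined with irreducibility then yields uniqueness of the invariant measure and ergodicity.

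For the It\^{o} trick itself, start $X$ under $\tmop{Leb}_{\mathbb{T}^d}$, so that $X_s \sim \tmop{Leb}_{\mathbb{T}^d}$ for every $s$. By the Haussmann-Pardoux time-reversal formula applied with the constant stationary density $\rho \equiv 1$, the reversed process $\hat{X}_s \assign X_{T - s}$ is a smooth diffusion driven by some backward Brownian motion $\hat{B}$ with generator $\hat{\mathcal{L}}_s = \mathLaplace - b (T - s, \cdummy) \cdummy \nabla$. Applying It\^{o}'s formula forward to $f (s, X_s)$ and backward to $f (T - s, \hat{X}_s)$, and substituting $u = T - s$ in the backward integral, the convective contributions cancel and one obtains the forward-backward identity
\[ 2 \int_0^t \mathLaplace f (s, X_s) \mathd s = - M_t - (\hat{M}_T - \hat{M}_{T - t}), \qquad t \in [0, T], \]
where $M_t = \sqrt{2} \int_0^t \nabla f (s, X_s) \cdummy \mathd B_s$ is the forward Brownian martingale and $\hat{M}_r = \sqrt{2} \int_0^r \nabla f (T - s, \hat{X}_s) \cdummy \mathd \hat{B}_s$ is its backward analog. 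The process $t \mapsto \hat{M}_T - \hat{M}_{T - t}$ is a martingale in the forward filtration, and a change of variable shows that both $M$ and this process have quadratic variation $2 \int_0^{\cdummy} | \nabla f |^2 (s, X_s) \mathd s$.

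From here the bound is a standard moment estimate. BDG applied to both martingales gives, for $p \geqslant 2$,
\[ \mathbb{E} \left[ \sup_{t \leqslant T} \left| \int_0^t \mathLaplace f (s, X_s) \mathd s \right|^p \right] \lesssim \mathbb{E} \left[ \left( \int_0^T | \nabla f |^2 (s, X_s) \mathd s \right)^{p / 2} \right] . \]
Minkowski's inequality (valid since $p / 2 \geqslant 1$) combined with stationarity $X_s \sim \tmop{Leb}_{\mathbb{T}^d}$ bounds the right-hand side by $\left( \int_0^T \| \nabla f (s, \cdummy) \|_{L^p (\mathbb{T}^d)}^2 \mathd s \right)^{p / 2}$, and H\"{o}lder in time with $q \geqslant 2$ gives $\int_0^T \| \nabla f \|_{L^p}^2 \mathd s \leqslant T^{1 - 2 / q} \| \nabla f \|_{L^q_T L^p}^2$. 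Raising to the power $p / 2$ yields the claimed $T^{p (1 / 2 - 1 / q)} \| \nabla f \|_{L^q_T L^p}^p$, with a constant depending only on $p$ and not on $b$. The main subtlety I expect is verifying the cancellation in the forward-backward identity carefully when $b$ is time-dependent and correctly identifying the quadratic variation of the backward martingale after the substitution $u = T - s$; the remaining steps are routine.
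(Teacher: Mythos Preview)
Your proposal is correct and follows essentially the same forward--backward martingale argument as the paper: apply It\^{o} to $f$ along $X$ and along the time-reversed process $\hat{X}$ (whose generator is $\mathcal{L}^{\ast}_t$ because $\nabla \cdummy b = 0$), add to cancel the drift terms, then use BDG, Minkowski, stationarity and H\"{o}lder in time. One small correction: the process $t \mapsto \hat{M}_T - \hat{M}_{T - t}$ is \emph{not} a martingale in the forward filtration; $\hat{M}$ is a martingale in the backward filtration generated by $\hat{X}$, and BDG applied there controls $\sup_{r \leqslant T} | \hat{M}_r |$ (hence $\sup_{t \leqslant T} | \hat{M}_T - \hat{M}_{T - t} |$) by $\langle \hat{M} \rangle_T$, so the estimate goes through unchanged.
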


\begin{proof}
  Markov property, ergodicity and uniqueness of the invariant measure are well
  known. For smooth $f \in C^{\infty} ([0, T] \times \mathbb{T}^d,
  \mathbb{R})$ we have that
  \[ f (T, X_T) = f (0, X_0) + \int_0^T  (\partial_s f + b \cdummy
     \nabla f + \mathLaplace f) (s, X_s) \mathd s + \int_0^T \sqrt{2} \nabla f
     (s, X_s) \cdummy \mathd B_s . \]
  Since $X$ is a Markov process with invariant measure
  $\tmop{Leb}_{\mathbb{T}^d}$, for $X_0 \sim \tmop{Leb}_{\mathbb{T}^d}$ the
  time-reversed process $\hat{X}_t = X_{T - t}$ has the generator
  $\mathcal{L}_t^{\ast}$, the adjoint of $\mathcal{L}_t = \mathLaplace + b (t)
  \cdummy \nabla$ (see e.g. {\cite{Kolmogorov37}},
  {\cite{HaussmannPardoux86}}, {\cite{GubinelliJara13}},
  {\cite{GoncalvesJara14}}). Therefore,
  \[ f (0, \hat{X}_T) = f (T, \hat{X}_0) + \int_0^T (- \partial_s f
     - b \cdummy \nabla f + \mathLaplace f) (T - s, \hat{X}_s) \mathd s +
     \int_0^T \sqrt{2} \nabla f (T - s, \hat{X}_s) \cdummy \mathd \hat{B}_s,
  \]
  where $\hat{B}$ is a Brownian motion in the filtration generated by
  $\hat{X}$. Adding both equations gives
  \[ - 2 \int_0^T \mathLaplace f (s, X_s) \mathd s = \int_0^T \sqrt{2} \nabla
     f (s, X_s) \cdummy \mathd B_s + \int_0^T \sqrt{2} \nabla f (T - s,
     \hat{X}_s) \cdummy \mathd \hat{B}_s, \]
  and thus Burkholder-Davis-Gundy-inequality, Minkowski's inequality and
  stationarity yield
  \begin{eqnarray}
    \mathbb{E} \left[ \sup_{t \leqslant T} \left| \int_0^t \mathLaplace f (s,
    X_s) \mathd s \right|^p \right] & \lesssim & \mathbb{E} \left[ \left(
    \int_0^T | \nabla f (s, X_s) |^2 \mathd s \right)^{p / 2} \right]
    \nonumber\\
    & &+\mathbb{E} \left[ \left( \int_0^T | \nabla f (T - s, X_{T - s}) |^2
    \mathd s \right)^{p / 2} \right] \nonumber\\
    & \lesssim &\left( \int_0^T \mathbb{E} [| \nabla f (s, X_s) |^p]^{2 / p}
    \mathd s \right)^{p / 2}  \label{herethemassappears}\\
    & \leqslant& T^{\left( 1 - \frac{2}{q} \right) \frac{p}{2}} \left(
    \int_0^T \| \nabla f (s, \cdummy) \|^q_{L^p} \mathd s
    \right)^{\frac{1}{q}} \nonumber\\
    & =& T^{\left( \frac{p}{2} - \frac{p}{q} \right)} \| \nabla f \|_{L^q_T
    L^p}^p . \nonumber
  \end{eqnarray}
  
\end{proof}

We handle the non-divergence-free part of $b$ as a perturbation with
Girsanov's theorem, for which we need the following result.

\begin{lemma}[Novikov type bound]
  \label{stochexponential}Let $b$ and $X$ be as in the previous lemma. For all
  $p \in [1, \infty)$ there exists $C > 0$, independent of $b$, such that for
  all $a \in L^4_T B^0_{2 r, 1, 2}$, $r \in [d, \infty]$, the stochastic
  exponential
  \[ \mathcal{E} \left( \int_0^{\cdummy} a (s, X_s) \cdummy \mathd B_s
     \right)_t = \exp \left( \int_0^t a (s, X_s) \cdummy \mathd B_s -
     \frac{1}{2} \int_0^t | a |^2 (s, X_s) \mathd s \right) \]
  satisfies
  \[ \mathbb{E} \left[ \mathcal{E} \left( \int_0^{\cdummy} a (s, X_s) \cdummy
     \mathd B_s \right)_t^p \right] \lesssim \exp (C \| a \|^4_{L^4_T B^0_{2
     r, 1, 2}}) . \]
\end{lemma}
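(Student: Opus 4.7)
The plan is to reduce the $p$-th moment bound on $\mathcal{E}(M)_T$ (where $M_t := \int_0^t a(s,X_s)\cdot \mathrm{d}B_s$) to an exponential integrability bound for the quadratic variation $\langle M\rangle_T = \int_0^T |a|^2(s,X_s)\,\mathrm{d}s$, and to establish the latter via moment bounds from the It\^o trick (Lemma~\ref{Itotricklemma}) combined with a paraproduct-type deterministic estimate tailored to the space $B^0_{2r,1,2}$. The first step uses the algebraic identity
\[
\mathcal{E}(M)_T^p = \mathcal{E}(pM)_T \exp\!\bigl(\tfrac{p(p-1)}{2}\langle M\rangle_T\bigr),
\]
Cauchy--Schwarz, and a second application of the same identity to $\mathcal{E}(pM)^2 = \mathcal{E}(2pM)\exp(p^2\langle M\rangle_T)$; since $\mathcal{E}(qM)$ is a nonnegative local martingale, $\mathbb{E}[\mathcal{E}(qM)_T]\leq 1$ holds without any a priori Novikov input, and the problem is reduced to showing $\mathbb{E}[\exp(c\langle M\rangle_T)]\lesssim \exp(C\|a\|^4_{L^4_TB^0_{2r,1,2}})$ for every $c>0$.

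For the latter I would obtain sub-Gaussian moment bounds on the additive functional $F:=\int_0^T|a|^2(s,X_s)\,\mathrm{d}s$. Decomposing $|a|^2 = \overline{|a|^2}(s) + \Delta g(s,\cdot)$ with $g = (-\Delta)^{-1}(|a|^2 - \overline{|a|^2})$ to make the integrand mean-zero in space on $\mathbb{T}^d$, the deterministic mean contribution is bounded by $T^{1/2}\|a\|^2_{L^4_T L^{2r}} \lesssim_T \|a\|^2_{L^4_T B^0_{2r,1,2}}$ via the embedding $B^0_{2r,1,2}\hookrightarrow L^{2r}$. The fluctuating part is handled by Lemma~\ref{Itotricklemma} with $q=2$ and moment exponent $n$; keeping track of the sharp $\sqrt{n}$ constant in Burkholder--Davis--Gundy, this yields
\[
\mathbb{E}\!\left[\Big|\int_0^T\Delta g(s,X_s)\,\mathrm{d}s\Big|^n\right]^{1/n} \lesssim \sqrt{n}\,\|\nabla g\|_{L^2_T L^n(M)},
\]
and summing the Taylor expansion of $\exp(cF)$ then gives the Gaussian-type exponent $\exp(Cc^2\|\nabla g\|_{L^2_T L^n}^2)$, provided $\|\nabla g(s,\cdot)\|_{L^n}$ is controlled by $\|a(s)\|^2_{B^0_{2r,1,2}}$ uniformly in $n$ in an appropriate range.

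The main obstacle is therefore the deterministic product estimate
\[
\|\nabla (-\Delta)^{-1}(|a|^2 - \overline{|a|^2})\|_{L^n(M)} \lesssim \|a\|^2_{B^0_{2r,1,2}},
\]
which I would prove by a Littlewood--Paley/paraproduct decomposition of $a\cdot a$. The two paraproduct pieces $T_a a$ are easy thanks to the derivative gain from $\nabla(-\Delta)^{-1}$, but the resonant high--high term $R(a,a) = \sum_j \Delta_j a\cdot \widetilde{\Delta}_j a$ is the only contribution where one derivative does not suffice to restore balance; this is precisely the term for which the $\ell^2$ summability in the third index of $B^0_{2r,1,2}$ was engineered, and it should close via Cauchy--Schwarz in $j$, Bernstein's inequality, and $L^p$-boundedness of Riesz transforms, drawing on the auxiliary results for $B^0_{p,1,2}$ gathered in Appendix~\ref{appendixA}. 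Combining this deterministic bound with the moment estimate and Step~1 yields the stated exponential inequality with exponent proportional to $\|a\|^4_{L^4_T B^0_{2r,1,2}}$.
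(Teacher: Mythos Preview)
Your proposal is correct but follows a genuinely different route from the paper's proof of this lemma. The paper exploits the forward--backward martingale decomposition more directly: after splitting off the spatial mean of $|a|^2$, it writes $\int_0^t \Delta\Delta^{-1}(|a|^2-\overline{|a|^2})(s,X_s)\,\mathrm{d}s$ as a sum of a forward and a backward stochastic integral, applies $e^{|x|}\lesssim e^{x^2}$ together with Cauchy--Schwarz, and then invokes the exponential martingale inequality to arrive at the bound $\exp(c\|\Delta^{-1}\nabla|a|^2\|_{L^2_TL^\infty}^2)$, which is finished off by Besov embedding (using $r\ge d$) and Lemma~\ref{estimateforsquarenorm}. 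No BDG constants need to be tracked and no Taylor series is summed. Your approach---Taylor-expand $\exp(cF)$, bound the $n$-th moment via the It\^o trick with the sharp $\sqrt{n}$ BDG constant, control $\|\nabla g\|_{L^2_TL^n}$ uniformly in $n\ge r$ via Besov embedding, and resum using $\sum_n (cK)^n n^{n/2}/n!\lesssim e^{CK^2}$---is exactly the argument the paper deploys later for the Euclidean case (Lemma~\ref{NoviovboundEuclid}), where no stationary probability measure exists and the forward--backward representation is not available at the level of the exponential. Both routes terminate in the same deterministic product estimate $\||a|^2\|_{L^2_TB^0_{r,1}}\lesssim\|a\|^2_{L^4_TB^0_{2r,1,2}}$ of Lemma~\ref{estimateforsquarenorm} (your paraproduct discussion is essentially its proof). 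The paper's torus argument is shorter and more elegant---the authors note it was suggested by Gubinelli---while your moment-based argument is more robust and carries over to $\mathbb{R}^d$ without change.
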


\begin{proof}
  Since $\mathbb{E} [\mathcal{E} (\ldots)_T] \leqslant 1$, we obtain from the
  Cauchy-Schwarz inequality
  \begin{eqnarray*}
    \mathbb{E} \left[ \left| \mathcal{E} \left( \int_0^{\cdummy} a (s, X_s)
    \cdummy \mathd B_s \right)_T \right|^p \right] & =&\mathbb{E} \left[ \exp
    \left( p \int_0^T a (s, X_s) \cdummy \mathd B_s - \frac{p}{2} \int_0^T | a
    (s, X_s) |^2 \mathd s \right) \right]\\
    & =&\mathbb{E} \left[ \exp \left( 2 p \int_0^T a (s, X_s) \cdummy \mathd
    B_s - \frac{(2 p)^2}{2} \int_0^T | a (s, X_s) |^2 \mathd s \right)^{1 / 2}
    \right.\\
    & &\qquad \cdot \left. \exp \left( \left( p^2 - \frac{p}{2} \right)
    \int_0^T | a (s, X_s) |^2 \mathd s \right) \right]\\
    & \leqslant & \mathbb{E} \left[ \exp \left( c \int_0^T | a (s, X_s) |^2
    \mathd s \right) \right]^{1 / 2},
  \end{eqnarray*}
  where $c = (2 p^2 - p) > 0$ and in what follows $c > 0$ may change in every
  step. Now we bound
  \[ \overline{| a |^2} (s) = \int_{\mathbb{T}^d} | a |^2 (s, x) \mathd x =
     \|a (s) \|_{L^2}^2 \lesssim \| a (s) \|_{B^0_{2 r, 1, 2}}^2, \]
  and using $e^{| x |} \lesssim e^{x^2}$ together with the forward-backward
  representation from the proof of Lemma \ref{Itotricklemma} we obtain
  \begin{eqnarray}
    \mathbb{E} \left[ \exp \left( c \int_0^t | a |^2 (s, X_s) \mathd s
    \right) \right]
    & =& \exp \left( c \int_0^t \overline{| a |^2} (s) \mathd s \right)\nonumber
   \\ & &\cdot\mathbb{E}\left[ \exp \left( c \int_0^t \mathLaplace \mathLaplace^{- 1}
    \left( | a |^2 - \overline{| a |^2} \right) (s, X_s) \mathd s \right)
    \right]  \label{Laplaceinverseexp}\\
    & \lesssim & \exp (c \| a  \|^4_{L^2_T B^0_{2 r, 1, 2}}) \mathbb{E} \left[
    \exp \left( - \frac{c}{\sqrt{2}} \int_0^t \mathLaplace^{- 1} \nabla | a
    |^2 (s, X_s) \mathd B_s \right) \right. \nonumber\\
    & & \cdot \exp \left( - \frac{c}{\sqrt{2}} \int_0^t
    \mathLaplace^{- 1} \nabla | a |^2 (t - s, \hat{X}_s) \mathd \hat{B}_s 
    \bigg)\right], \nonumber
  \end{eqnarray}
  using $\nabla \mathLaplace^{- 1} \left( | a |^2 - \overline{| a |^2} \right)
  = \Delta^{- 1} \nabla | a |^2$ in the last step. Thus, applying the
  Cauchy-Schwarz inequality (and changing $c$) it remains to estimate
  \begin{eqnarray*}
    \mathbb{E} \left[ \exp \left( c \int_0^t \mathLaplace^{- 1} \nabla | a |^2
    (s, X_s) \mathd B_s \right) \right]^2 & \lesssim & \mathbb{E} \left[ \exp
    \left( c \int_0^t (\mathLaplace^{- 1} \nabla | a |^2)^2 (s, X_s) \mathd s
    \right) \right]\\
    & \leqslant & \exp (c \| \mathLaplace^{- 1} \nabla | a |^2 \|^2_{L^2
    L^{\infty}}),
  \end{eqnarray*}
  where we used the exponential martingale inequality. We get, using the Besov
  embedding theorem with $0 \leqslant 1 - d \left( \frac{1}{r} -
  \frac{1}{\infty} \right)$,
  \begin{equation}
    \| \mathLaplace^{- 1} \nabla | a |^2 \|_{L^2 L^{\infty}} \lesssim \|
    \mathLaplace^{- 1} \nabla | a |^2 \|_{L^2 B^0_{\infty, 1}} \lesssim \|
    \mathLaplace^{- 1} \nabla | a |^2 \|_{L^2 B^1_{r, 1}} \lesssim \| | a |^2
    \|_{L^2 B^0_{r, 1}}, \label{criticalinfestimate}
  \end{equation}
  which yields the claim because $\| | a |^2 \|_{L^2_T B^0_{r, 1}} \lesssim \|
  a \|^2_{L^4_T B^0_{2 r, 1, 2}}$ by Lemma \ref{estimateforsquarenorm}.
\end{proof}

Exponential integrability was also obtained from the forward-backward
decomposition in the infinite-dimensional case of {\cite{GubinelliJara13}}
(without an application to change of drift), which was pointed out to the
authors by M. Gubinelli and which significantly simplifies the proof in the
periodic case compared to the longer argument in Lemma \ref{NoviovboundEuclid}
below.\tmcolor{magenta}{ }

The It{\^o} trick together with the previous Novikov bound gives us uniform a
priori bounds for solutions to $\mathd X_t^n = b_n (t, X_t^n) \mathd t +
\sqrt{2} \mathd W_t$. Any limit point will be an {\tmem{energy solution}} to
$\mathd X_t = b (t, X_t) \mathd t + \sqrt{2} \mathd W_t$, analogously to
{\cite{GubinelliPerkowski20}}, {\cite{GraefnerPerkowski23}}.

\begin{definition}[Energy solution]
  \label{defenergysolutions}Let $b : \mathbb{R}_+ \rightarrow \mathcal{S}'
  (\mathbb{T}^d, \mathbb{R}^d)$ be such that for all $T > 0$ and $f \in
  C^{\infty} ([0, T] \times \mathbb{T}^d)$ we have $b \cdummy \nabla f \in
  L^2_T H^{- 1} (\mathbb{T}^d)$. Let $X$ be a stochastic process with values
  in $C (\mathbb{R}_+ ; \mathbb{T}^d)$ such that for all $T > 0$:
  \begin{enumerateroman}
    \item \label{incompressible}$X$ is {\tmem{incompressible in probability}},
    i.e. for all $\varepsilon > 0$ and $T > 0$ there exists $M = M
    (\varepsilon, T) > 0$ such that for all $A \in \mathcal{B} (\mathbb{R}^d)$
    with the Lebesgue measure $\tmop{Leb} (A)$
    \[ \mathbb{P} (X_t \in A) \leqslant \varepsilon + M \cdot \tmop{Leb} (A),
       \qquad t \in [0, T], \]
    as well as
    \[ \mathbb{P} \left( \sup_{t \leqslant T} \left| \int_0^t f (s, X_s)
       \mathd s \right| > \delta \right) \leqslant \varepsilon +
       \frac{M}{\delta} \| f \|_{L^1_T L^1 (\mathbb{T}^d)}, \]
    for all $f \in C^{\infty} ([0, T] \times \mathbb{T}^d)$.
    
    \item \label{admissible}X is {\tmem{admissible in probability}}/satisfies
    an {\tmem{energy estimate in probability}}, i.e. for all $\varepsilon > 0$
    and $T > 0$ there exists $M = M (\varepsilon, T) > 0$ such that for all $f
    \in C^{\infty} ([0, T] \times \mathbb{T}^d)$
    \begin{equation}
      \mathbb{P} \left( \sup_{t \leqslant T} \left| \int_0^t f (s, X_s) \mathd
      s \right| > \delta \right) \leqslant \varepsilon + \frac{M}{\delta} \| f
      \|_{L^2_T H^{- 1} (\mathbb{T}^d)} . \label{Itotrickbound}
    \end{equation}
    \item \label{martingaleproperty}For any $f \in C^{\infty} ([0, T] \times
    \mathbb{T}^d)$, the process
    \[ M^f_t = f (t, X_t) - f (0, X_0) - \int_0^t (\partial_s + \mathLaplace +
       b \cdummy \nabla) f (s, X_s) \mathd s, \qquad t \in [0, T], \]
    is a (continuous) local martingale in the filtration generated by $X$,
    where the integral is defined as
    \[ \int_0^t (\partial_s + \mathLaplace + b \cdummy \nabla) f (s, X_s)
       \mathd s \assign I ((\partial_s + \mathLaplace + b \cdummy \nabla)
       f)_t, \]
    with $I$ the unique continuous extension from $C^{\infty} ([0, T] \times
    \mathbb{T}^d)$ to $L^2_T H^{- 1} (\mathbb{T}^d)$ of the map $g \mapsto
    \int_0^{\cdummy} g (s, X_s) \mathd s$ taking values in the continuous
    adapted stochastic processes equipped with the topology of uniform
    convergence on compacts (ucp-topology), and the extension $I$ exists by
    (\ref{Itotrickbound}).
    
    \item \label{quadraticvariation}The local martingale $M^f$ from
    \ref{martingaleproperty} has quadratic variation
    \[ \langle M^f \rangle_t = \int_0^t | \nabla f (s, X_s) |^2 \mathd s . \]
  \end{enumerateroman}
  Then $X$ is called an {\tmem{energy solution}} to the SDE $\mathd X_t = b
  (t, X_t) \mathd t + \sqrt{2} \mathd W_t$.
\end{definition}

Note that \ref{quadraticvariation} implies that the local martingale from
\ref{martingaleproperty} is a proper martingale.

\begin{remark}
  These conditions are tailored to handle initial distributions with densities
  in $L^1 (\mathbb{T}^d)$. If we instead assume densities in $L^2
  (\mathbb{T}^d)$, then we can replace the incompressibility condition by the
  simpler $L^1$-bound
  \[ \mathbb{E} [| f (X_t) |] \lesssim_T \| f \|_{L^2 (\mathbb{T}^d)}, \qquad
     t \in [0, T], \]
  and the energy estimate by
  \[ \mathbb{E} \left[ \sup_{t \leqslant T} \left| \int_0^t f (s, X_s) \mathd
     s \right| \right] \lesssim_T \| f \|_{L^2_T H^{- 1} (\mathbb{T}^d)}, \]
  and in that case the property iv would not be necessary to obtain the
  uniqueness of energy solutions, because we could replace convergence in
  probability below by $L^1$-convergence.
  
  In the theorem below we would need $\eta \in L^{2 + \kappa} (\mathbb{T}^d)$
  if $b_2 \neq 0$ to get these $L^1$-bounds, due to an application of
  H{\"o}lder's inequality to split off the stochastic exponential.
\end{remark}

\begin{theorem}[Existence of energy solutions in $\mathbb{T}^d$]
  \label{tightnesstorusBesov}Let $\mu \ll \tmop{Leb}$ be a probability measure
  on $\mathbb{T}^d$ and let $b_i : \mathbb{R}_+ \rightarrow \mathcal{S}'
  (\mathbb{T}^d, \mathbb{R}^d), i \in \{ 1, 2 \}$ with $\nabla \cdummy b_1
  \equiv 0$. Assume that
  \begin{enumerateroman}
    \item \label{exis2}$\hat{b_1} (0) \in L^2_{\tmop{loc}}$ and $A (b_1)
    \in L^q_{\tmop{loc}} L^p$ for some $p \in [2, \infty), q \in (2, \infty]$
    with $p \left( \frac{1}{2} - \frac{1}{q} \right) > 1$ (both of these
    conditions are satisfied if $b_1 \in L^q_{\tmop{loc}} B^{- 1}_{p, 2}$);
    
    \item \label{exis3}$b_2 \in L^4_{\tmop{loc}} B^0_{2 r, 1, 2}$ for some $r
    \in [d, \infty]$.
  \end{enumerateroman}
  Let $b = b_1 + b_2$ and let $X^n : \mathbb{R}_+ \rightarrow \mathbb{T}^d$
  solve
  \[ \mathd X_t^n = b^n (t, X_t^n) \mathd t + \sqrt{2} \mathd B_t, \]
  with $X^n_0 \sim \mu$, where we recall that $b^n = b \ast \rho^n$. Then,
  $(X^n)_{n \in \mathbb{N}}$ is tight on $C (\mathbb{R}_+, \mathbb{T}^d)$. Any
  limit point is an energy solution to $\mathd X_t = b (t, X_t) \mathd t +
  \sqrt{2} \mathd W_t$.
\end{theorem}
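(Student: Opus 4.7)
My plan is to combine the It\^o trick of Lemma \ref{Itotricklemma} with Girsanov's theorem to obtain uniform a priori bounds on the approximations $X^n$, from which tightness and identification of any limit point as an energy solution will follow. The main subtlety is that $\mu$ has density $\eta \in L^1(\tmop{Leb})$ only, so the stationary It\^o trick (which applies under $X_0 \sim \tmop{Leb}$) must be coupled both with a density-cutoff argument and with a Girsanov change of measure that absorbs the non-divergence-free perturbation $b_2^n$.

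\textbf{Step 1 (A priori estimates).} For each $n$, define the measure $\mathbb{Q}^n$ by
\[
\frac{\mathd\mathbb{Q}^n}{\mathd\mathbb{P}} = \mathcal{E}\!\left(-\tfrac{1}{\sqrt{2}}\int_0^{\cdummy} b_2^n(s,X^n_s)\cdummy \mathd B_s\right)_T,
\]
so that under $\mathbb{Q}^n$ the process $X^n$ has only the divergence-free drift $b_1^n$. By Lemma \ref{stochexponential} the moments of $\mathcal{E}$ and of $\mathcal{E}^{-1}$ are bounded uniformly in $n$, since $\|b^n_2\|_{L^4_T B^0_{2r,1,2}} \lesssim \|b_2\|_{L^4_T B^0_{2r,1,2}}$. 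Under $\mathbb{Q}^n_{\tmop{Leb}}$ the process $X^n$ is stationary, so Lemma \ref{Itotricklemma} applied to $f = \mathLaplace g$ with $g \in L^2_T H^1$ (choosing e.g.\ $p = q = 2$) yields $\|f\|_{L^2_T H^{-1}}$-type It\^o trick bounds uniform in $n$. For a general event $A$, Cauchy-Schwarz gives $\mathbb{P}_\mu(A) \le C\,\mathbb{Q}^n_\mu(A)^{1/2}$, and splitting $\eta = (\eta\wedge K) + (\eta-K)_+$ yields
\[
\mathbb{Q}^n_\mu(A) \le K\,\mathbb{Q}^n_{\tmop{Leb}}(A) + \int (\eta-K)_+\,\mathd\tmop{Leb},
\]
where the last term is made arbitrarily small by choosing $K$ large, independently of $n$. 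Combining these gives properties \ref{incompressible} and \ref{admissible} of Definition \ref{defenergysolutions} uniformly in $n$.

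\textbf{Step 2 (Tightness and limit identification).} Applying the uniform energy estimate to increments of the drift integral, together with the standard modulus of continuity for Brownian motion, gives Aldous-type tightness of $(X^n)$ on $C(\mathbb{R}_+;\mathbb{T}^d)$. Let $X$ be a subsequential weak limit, realized via Skorohod representation with a.s.\ uniform convergence on compacts. Properties \ref{incompressible} and \ref{admissible} are preserved because the bounding functionals are continuous on path space and the bounds are uniform in $n$. For \ref{martingaleproperty}, the mollification satisfies $b^n \to b$ in the function spaces of assumptions \ref{exis2} and \ref{exis3}, which gives $b^n\cdummy\nabla f \to b\cdummy\nabla f$ in $L^2_T H^{-1}$ for smooth $f$; combined with the uniformity of the energy estimate this yields $\int_0^{\cdummy} b^n(s,X^n_s)\cdummy\nabla f(s,X^n_s)\,\mathd s \to I(b\cdummy\nabla f)$ in ucp, so the limiting process $M^f$ is a continuous local martingale. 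Property \ref{quadraticvariation} passes to the limit because $\nabla f$ is continuous and the quadratic variation is governed by the Brownian component, which is unchanged by taking weak limits.

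The main obstacle is the identification of the drift term in \ref{martingaleproperty} for the limit $X$: the mollified drift $b^n$ converges to $b$ only in a distributional sense, and its evaluation along the path $X^n$ makes sense only via the uniform energy estimate of Step 1. This is precisely the role of the continuous-extension map $I$ in Definition \ref{defenergysolutions}, which forces any convergent subsequence to inherit the appropriate drift functional in the limit. Everything else reduces to routine verifications of the energy-solution framework.
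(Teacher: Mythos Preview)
Your overall strategy is right and close to the paper's, but there are two genuine gaps.

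\textbf{Order of Girsanov and density cutoff.} You first apply Cauchy--Schwarz to get $\mathbb{P}_\mu(A)\le C\,\mathbb{Q}^n_\mu(A)^{1/2}$ and only afterwards cut off the density $\eta$. The constant $C$ here is $\bigl(\mathbb{E}_{\mathbb{Q}^n_\mu}[(\mathd\mathbb{P}/\mathd\mathbb{Q}^n)^2]\bigr)^{1/2}$, i.e.\ a second moment of the stochastic exponential of $b_2^n$ along the \emph{divergence-free} diffusion started from $\mu$. Lemma~\ref{stochexponential}, however, is proved via the forward--backward decomposition and therefore only yields this bound under the \emph{stationary} initial law $\tmop{Leb}$, not under a general $\mu$ with $\eta\in L^1$. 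Since the quantity is an expectation and not a probability, the density-cutoff trick (which bounds probabilities) does not rescue it afterwards. The fix is to reverse the order, exactly as the paper does: first bound $\mathbb{P}_\mu(A)\le \int\eta\mathbbm{1}_{\{\eta>K\}} + K\,\mathbb{P}_{\tmop{Leb}}(X^n\in A)$, and \emph{then} apply Girsanov and Cauchy--Schwarz under $\mathbb{P}_{\tmop{Leb}}$, where Lemma~\ref{stochexponential} is available.

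\textbf{Tightness.} You never use the hypothesis $p\bigl(\tfrac12-\tfrac1q\bigr)>1$, which is precisely what makes tightness work. The $L^2_T H^{-1}$ form of the energy estimate (your choice $p=q=2$) controls $\sup_{t\le T}\bigl|\int_0^t f\bigr|$ but gives no useful decay for small time increments, and an ``Aldous-type'' argument from it is at best unclear (the It\^o trick is a forward--backward martingale identity over $[0,T]$, so localising to stopping-time windows is delicate). The paper instead proves tightness \emph{first} in the stationary divergence-free case by applying the It\^o trick of Lemma~\ref{Itotricklemma} with the given $p,q$ to obtain
\[
\mathbb{E}_{\tmop{Leb}}\Bigl[\Bigl|\int_s^t b_1^n(r,\tilde X^n_r)\,\mathd r\Bigr|^p\Bigr]\lesssim |t-s|^{p(\frac12-\frac1q)}\|A(b^n)\|_{L^q L^p}^p,
\]
and then invokes Kolmogorov's criterion; tightness for general $\mu$ and $b_2\neq 0$ is transferred via the comparison $\mathbb{P}(X^n\in A)\le \varepsilon + M\,\mathbb{P}_{\tmop{Leb}}(\tilde X^n\in A)$ established in Step~1. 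Your Step~2 should follow this route.

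The remaining parts of your sketch (passing properties \ref{incompressible}--\ref{quadraticvariation} to the limit via Portmanteau and the convergence $b^n\cdummy\nabla f\to b\cdummy\nabla f$ in $L^2_T H^{-1}$) match the paper's argument.
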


\begin{proof}
  To prove tightness of $(X^n)_{n \in \mathbb{N}}$, we first assume that
  $b_2^n \equiv 0$, i.e. the drift is divergence-free and $\eta \assign
  \frac{\mathd \mu}{ \tmop{dLeb}_{\mathbb{T}^d}} \equiv 1$, meaning that we
  consider the stationary initial condition. Then obviously $(X^n_0)_{n \in
  \mathbb{N}}$ and $B$ are tight. To prove tightness of the drift term $\left(
  \int_0^{\cdummy} b^n (s, X_s^n) \mathd s \right)_{n \in \mathbb{N}}$ we
  estimate for $0 \leqslant s \leqslant t \leqslant T$
  \begin{eqnarray}
    \mathbb{E} \left[ \left| \int_s^t b^{n, i} (r, X_r^n) \mathd r
    \right|^p \right]
    & \lesssim &  \mathbb{E} \left[ \left| \int_s^t \mathLaplace
    \mathLaplace^{- 1} b^{n, i} (r, X_r^n) \mathd r \right|^p \right]
     +\mathbb{E} \left[ \left| \int_s^t \mathcal{F} b^{n, i} (r)  (0) \mathd r
    \right|^p \right] \nonumber\\
    & \lesssim & | t - s |^{p \left( \frac{1}{2} - \frac{1}{q} \right)} \|
    \mathLaplace^{- 1} \nabla (\nabla \cdummy A_i (b^n)) \|^p_{L^q L^p
    (\mathbb{T}^d)}\nonumber \\ & & + | t - s |^{p / 2} \| \mathcal{F} b^n  (0) \|_{L^2_T}^p 
    \label{b2termtorus}\\
    & \lesssim_T & | t - s |^{p \left( \frac{1}{2} - \frac{1}{q} \right)} \{ \|
    A (b^n) \|^p_{L^q L^p (\mathbb{T}^d)} + \| \mathcal{F} b^n  (0)
    \|_{L^2_T}^p \} .  \label{tightnessbound}
  \end{eqnarray}
  Here we used that $\mathLaplace^{- 1} \nabla (\nabla \cdummy)$ as an
  operator on functions on $\mathbb{R}^d$ is of Calderon-Zygmund type (it
  behaves like the Leray projector) and hence it restricts to a bounded
  operator on $L^s (\mathbb{R}^d)$ for any $s \in (1, \infty)$ and thus the
  same is true on $L^s (\mathbb{T}^d)$ by de Leeuw's theorem {\cite{Leeuw65}}.
  Since $p (\frac{1}{2} - \frac{1}{q}) > 1$, this proves tightness by
  Kolmogorov's continuity criterion.
  
  If now $b^n_2 \neq 0$ and $\eta \neq 1$, we bound for any $A \in \mathcal{B}
  (C ([0, T], \mathbb{T}^d))$ and any $M > 0$
  \begin{eqnarray*}
    \mathbb{P} (X^n \in A) & = & \int_{\mathbb{T}^d} \eta (y_0)
    \mathbb{P}_{y_0} (X^n \in A) \mathd y_0\\
    & \leqslant & \int_{\mathbb{T}^d} \eta (y_0) \1_{\{ \eta (y_0) > M \}}
    \mathd y_0 + M\mathbb{P}_{\tmop{Leb}} (X^n \in A)\\
    & = & \int_{\mathbb{T}^d} \eta (y_0) \1_{\{ \eta (y_0) > M \}} \mathd y_0
    + M\mathbb{E}_{\tmop{Leb}} \left[ \1_{\{ \tilde{X}_n \in A \}} \mathcal{E}
    \left( \int_0^{\cdummy} \sqrt{2} b_2^n (r, \tilde{X}_r^n) \cdot \mathd B_r
    \right)_T \right]\\
    & \leqslant & \int_{\mathbb{T}^d} \eta (y_0) \1_{\{ \eta (y_0) > M \}}
    \mathd y_0\\
    &  & + M\mathbb{P}_{\tmop{Leb}} (\tilde{X}^n \in A)^{\frac{1}{2}}
    \mathbb{E}_{\tmop{Leb}} \left[ \mathcal{E} \left( \int_0^{\cdummy}
    \sqrt{2} b_2^n (r, \tilde{X}_r^n) \cdot \mathd B_r \right)_T^2
    \right]^{\frac{1}{2}},
  \end{eqnarray*}
  where $\tilde{X}^n$ is the process with drift $b^n_2 \equiv 0$ and
  $\mathbb{P}_{\tmop{Leb}}$ and $\mathbb{E}_{\tmop{Leb}}$ are the probability
  and expectation with respect to the stationary initial condition. By Lemmas
  \ref{stochexponential} and \ref{uniformboundinapproxnew} the second
  expectation on the right hand side is uniformly bounded in $n$, and given
  $\varepsilon > 0$ we can find $M > 0$ large enough so that
  $\int_{\mathbb{T}^d} \eta (y_0) \1_{\{ \eta (y_0) > M \}} \mathd y_0
  \leqslant \varepsilon / 2$. Therefore, up to changing the value of $M = M
  (\varepsilon, T) > 0$, we get from the weighted Young inequality for
  products
  \begin{equation}
    \mathbb{P} (X^n \in A) \leqslant \varepsilon + M\mathbb{P}_{\tmop{Leb}}
    (\tilde{X}^n \in A), \label{eq:ExpXn-bound}
  \end{equation}
  uniformly in $n$. Since $(\tilde{X}^n)_{n \in \mathbb{N}}$ is tight under
  $\mathbb{P}_{\tmop{Leb}}$, for any $T > 0$ and $\varepsilon > 0$ there
  exists a compact set $K \subset C ([0, T], \mathbb{T}^d)$ with $\sup_n
  \mathbb{P}_{\tmop{Leb}} (\tilde{X}^n |_{[0, T]} \in K^c) \leqslant
  \frac{\varepsilon}{M}$, which yields
  \[ \sup_n \mathbb{P} (X^n |_{[0, T]} \in K^c) \leqslant 2 \varepsilon, \]
  and then tightness of $(X^n)_{n \in \mathbb{N}}$.
  
  Let $X$ be a limit point. From~\eqref{eq:ExpXn-bound} we obtain with the
  Portmanteau theorem for open $U \subset \mathbb{R}^d$ by stationarity of the
  Lebesgue measure for $\tilde{X}^n$:
  \[ \mathbb{P} (X_t \in U) \leqslant \liminf_{n \rightarrow \infty}
     \mathbb{P} (X^n_t \in U) \leqslant \varepsilon + M \cdot \tmop{Leb} (U),
     \qquad t \in [0, T] . \]
  Since any probability measure on $\mathbb{R}^d$ is outer regular, we then
  obtain for $A \in \mathcal{B} (\mathbb{R}^d)$ and $t \in [0, T]$
  \begin{eqnarray*}
    \mathbb{P} (X_t \in A) & = & \inf \left\{ \mathbb{P} (X_t \in U) : U
    \supset A, U \text{ open} \right\}\\
    & \leqslant & \inf \left\{ \varepsilon + M \cdot \tmop{Leb} (U) : U
    \supset A, U \text{ open} \right\}\\
    & = & \varepsilon + M \cdot \tmop{Leb} (A),
  \end{eqnarray*}
  i.e. $X$ satisfies the first part of the incompressibility condition
  \ref{incompressible} of energy solutions.
  
  To see that \ref{admissible} and the second part of \ref{incompressible} is
  satisfied, we use~\eqref{eq:ExpXn-bound} with the open set $A = \left\{
  \sup_{t \leqslant T} \left| \int_0^t f (s, X_s) \mathd s \right| > \delta
  \right\}$, and we bound the right hand side with the It{\^o} trick:
  \begin{eqnarray*}
    \mathbb{P}_{\tmop{Leb}} \left( \left| \sup_{t \nocomma \leqslant T}
    \int_0^t f (r, \tilde{X}_r^n) \mathd r \right| > \delta \right) &
    \leqslant & \frac{1}{\delta} \mathbb{E}_{\tmop{Leb}} \left[ \left| \sup_{t
    \nocomma \leqslant T} \int_0^t \mathLaplace \mathLaplace^{- 1} f (r,
    \tilde{X}_r^n) \mathd r \right|^2 \right]^{1 / 2}\\
    &  & + \frac{1}{\delta} \mathbb{E}_{\tmop{Leb}} \left[ \left| \int_0^T |
    \hat{f} (r, 0) | \mathd r \right|^2 \right]^{1 / 2}\\
    & \lesssim & \frac{1}{\delta}  \left( \| \mathLaplace^{- 1} \nabla f
    \|_{L^2 L^2 (\mathbb{T}^d)} + \int_0^T | \hat{f} (r, 0) | \mathd r
    \right)\\
    & \lesssim_T & \frac{1}{\delta} \| f \|_{L^2_T H^{- 1} (\mathbb{T}^d)},
  \end{eqnarray*}
  uniformly in $n$. Hence, we can pass to the limit with another application
  of the Portmanteau theorem. Analogously, we obtain
  \[ \mathbb{P}_{\tmop{Leb}} \left( \left| \sup_{t \nocomma \leqslant T}
     \int_0^t f (r, \tilde{X}_r^n) \mathd r \right| > \delta \right) \leqslant
     \frac{1}{\delta} \mathbb{E}_{\tmop{Leb}} \left[ \int_0^T | f (r,
     \tilde{X}_r^n) | \mathd r \right] = \frac{1}{\delta} \| f \|_{L^1_T L^1}
     . \]
  In order to prove \ref{martingaleproperty}, we consider $f \in C^{\infty}
  ([0, T] \times \mathbb{T}^d)$ and the sequence of martingales
  \[ M^{f, n}_t = f (t, X^n_t) - f (0, X^n_0) - \int_0^t (\partial_s +
     \mathLaplace + b^n \cdummy \nabla) f (s, X^n_s) \mathd s, \qquad t \in
     [0, T] . \]
  By the same arguments as for $(X^n)_{n \in \mathbb{N}}$, we get joint
  tightness of $(X^n, M^{f, n})_{n \in \mathbb{N}}$. Any limit point $(X,
  M^f)$ is such that $M^f$ is a local martingale in the filtration generated
  by $(X, M^f)$, see Proposition~IX.1.17 in {\cite{Jacod2003}}, and, by
  continuity of $f (t, \cdummy)$ and $f (0, \cdummy)$, we know that $M^f_t = f
  (t, X_t) - f (0, X_0) - I_t$, for a process $I$ to be identified. Note that
  \begin{eqnarray}
    \| (b^m - b^n) (t, \cdummy) \cdummy \nabla f (t, \cdummy) \|_{H^{- 1}} &
    \simeq &\| (b^m - b^n) (t, \cdummy) \cdummy \nabla f (t, \cdummy) \|_{B^{-
    1}_{2, 2}} \nonumber\\
    & \lesssim & \| (b^m - b^n) (t, \cdummy) \|_{B^{- 1}_{2, 2}} \| \nabla f
    (t, \cdummy) \|_{B^{1 + \kappa}_{\infty, 2}} \nonumber\\
    & \simeq & \| (b^m - b^n) (t, \cdummy) \|_{H^{- 1}} \| \nabla f (t,
    \cdummy) \|_{B^{1 + \kappa}_{\infty, 2}},  \label{driftestimateforsmoothf}
  \end{eqnarray}
  see e.g. Theorem 27.11 in {\cite{vanZuijlen22}}, and thus we obtain from the
  It{\^o} trick together with \eqref{eq:ExpXn-bound} for all $m \leqslant n$
  \[ \mathbb{P} \left( \sup_{t \leqslant T} \left| \int_0^t (b^m - b^n)
     \cdummy \nabla f (s, X^n_s) \mathd s \right| > \delta \right) \leqslant
     \varepsilon + \frac{M}{\delta} \| (b^m - b^n) (t, \cdummy) \|_{H^{- 1}}
     \| \nabla f (t, \cdummy) \|_{B^{1 + \varepsilon}_{\infty, 2}} . \]
  Passing to the limit $n \rightarrow \infty$, we deduce that
  \begin{eqnarray*}
    && \mathbb{P} \left( \sup_{t \leqslant T} \left| I_t - \int_0^t 
    (\partial_s + \mathLaplace + b^m \cdummy \nabla) f (s, X_s) \mathd s
    \right| > \delta \right)\\
    &\leqslant & \varepsilon + \frac{M}{\delta} \| (b^m - b) (t, \cdummy)
    \|_{H^{- 1}} \| \nabla f (t, \cdummy) \|_{B^{1 + \varepsilon}_{\infty,
    2}},
  \end{eqnarray*}
  from where we obtain by sending $m \rightarrow \infty$ that
  \[ I_t = I ((\partial_s + \mathLaplace + b \cdummy \nabla) f)_t \]
  as claimed.
  
  Finally, we have to show that $\langle M^f \rangle_t = \int_0^t | \nabla f
  (s, X_s) |^2 \mathd s$. But since for all $n \in \mathbb{N}$ the process
  \[ (M^{f, n}_t)^2 - \int_0^t | \nabla f (s, X^n_s) |^2 \mathd s, \qquad t
     \in [0, T], \]
  is a local martingale, we get again by Proposition~IX.1.17 in
  {\cite{Jacod2003}} that
  \[ (M^f_t)^2 - \int_0^t | \nabla f (s, X_s) |^2 \mathd s, \qquad t \in [0,
     T], \]
  is a local martingale. This concludes the proof.
\end{proof}

\subsection{Existence of energy solutions in
$\mathbb{R}^d$}\label{Euclideanset}

Formally, the Lebesgue measure is invariant for $\mathd X_t = b (t, X_t)
\mathd t + \sqrt{2} \mathd B_t$ with divergence-free measurable $b :
\mathbb{R}_+ \times \mathbb{R}^d \rightarrow \mathbb{R}^d$. Since
$\tmop{Leb}_{\mathbb{R}^d}$ is not a probability measure we can not use it as
initial distribution for a stochastic process in order to apply the
forward-backward martingale argument from Lemma \ref{Itotricklemma}, which was
essential for proving the existence of energy solutions on $\mathbb{T}^d$. In
the following we prove an It{\^o} trick type bound despite this non-existence
of a stationary probability measure. The main idea is to consider a sequence
of tori $\mathbb{T}^d_m$ of length $m \rightarrow \infty$.

\begin{lemma}[It{\^o} trick]
  \label{ItotricklemmaEuclid}Let $b \in C (\mathbb{R}_+ ; C^{\infty}_b
  (\mathbb{R}^d))$ and let $X : \mathbb{R}_+ \rightarrow \mathbb{R}^d$ solve
  \[ \mathd X_t = b (t, X_t) \mathd t + \sqrt{2} \mathd B_t, \]
  where $B$ is a Brownian motion, $\nabla \cdummy b \equiv 0$ and $b, \nabla b
  \in L^{\infty}_{\tmop{loc}} L^s$ for some $s \in [1, \infty)$. Assume that
  $X (0) \sim \mu$ with $\eta \assign \frac{\mathd \mu}{
  \tmop{dLeb}_{\mathbb{R}^d}} \in L^{\infty} (\mathbb{R}^d)$. Then, for all $T
  > 0$ and $f \in C^{\infty}_c ([0, T] \times \mathbb{R}^d, \mathbb{R})$
  \begin{equation}
    \mathbb{E} \left[ \sup_{0 \leqslant t \leqslant T} \left| \int_0^t
    \mathLaplace f (s, X_s) \mathd s \right|^p \right] \lesssim \| \eta
    \|_{L^{\infty} (\mathbb{R}^d)} T^{p \left( \frac{1}{2} - \frac{1}{q}
    \right)} \| \nabla f \|^p_{L^q L^p (\mathbb{R}^d)}, \label{ItotrickEuclid}
  \end{equation}
  for all $p \in [2, \infty)$ and $q \in [2, \infty]$. The implicit constant
  on the right hand side is independent of $b$.
\end{lemma}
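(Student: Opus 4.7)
The strategy mirrors the proof of Lemma \ref{Itotricklemma} but has to circumvent the lack of a stationary probability measure on $\mathbb{R}^d$ via an approximation by tori of increasing size. First I would use the $L^\infty$-bound $\mu \le \|\eta\|_{L^\infty} \operatorname{Leb}$ to reduce the claim to a Lebesgue-integrated statement: for any nonnegative $\phi$,
\[
\mathbb{E}[\phi(X)] = \int_{\mathbb{R}^d} \eta(x)\,\mathbb{E}_x[\phi(X^x)]\,\mathrm{d}x \le \|\eta\|_{L^\infty(\mathbb{R}^d)}\int_{\mathbb{R}^d}\mathbb{E}_x[\phi(X^x)]\,\mathrm{d}x,
\]
so it suffices to prove $J := \int_{\mathbb{R}^d}\mathbb{E}_x\bigl[\sup_{t\le T}|\int_0^t \Delta f(s,X^x_s)\,\mathrm{d}s|^p\bigr]\mathrm{d}x \lesssim T^{p(1/2-1/q)}\|\nabla f\|^p_{L^q L^p(\mathbb{R}^d)}$, which is precisely the Lebesgue-invariant analogue of the torus bound.

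Next I would approximate $b$ by a sequence $b^{(m)}$ of smooth divergence-free $m$-periodic vector fields on $\mathbb{R}^d$ that coincide with $b$ on a large ball, say $B(0,m/4)$. Since $b$ is smooth with $\nabla\cdot b\equiv 0$, locally there is a smooth antisymmetric potential $A$ with $b^i = \nabla\cdot A_i$; pick a smooth cutoff $\chi_m$ equal to $1$ on $B(0,m/4)$ and supported in $B(0,m/3)$, form the antisymmetric matrix field $A^{(m)} := \chi_m A$, and extend $A^{(m)}$ to a smooth $m$-periodic field on $\mathbb{R}^d$ by translation. Then $b^{(m)} := \nabla\cdot A^{(m)}$ is smooth, $m$-periodic, divergence-free (antisymmetry is preserved and $\partial_i\partial_j A^{(m)}_{ji}=0$), and agrees with $b$ on $B(0,m/4)$. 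The hypothesis $b,\nabla b\in L^\infty_{\mathrm{loc}}L^s$ secures the existence of a suitable potential $A$ and uniform control of $\|b^{(m)}\|_{L^\infty(B(0,m/4))}$ along the construction.

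The next step is to apply Lemma \ref{Itotricklemma} to the torus $\mathbb{T}^d_m := \mathbb{R}^d/m\mathbb{Z}^d$ of volume $m^d$, equipped with the normalized Lebesgue measure and the drift $b^{(m)}$. Tracking the normalization $\mathbb{E}_{\mathrm{Unif}(\mathbb{T}^d_m)}[\,\cdot\,] = m^{-d}\int_{\mathbb{T}^d_m}\mathbb{E}_x^m[\,\cdot\,]\,\mathrm{d}x$ through the proof of Lemma \ref{Itotricklemma}, the factors of $m^d$ cancel and for compactly supported $f$ with $\operatorname{supp} f\subset(-m/2,m/2)^d$ one has $\|\nabla f\|_{L^q L^p(\mathbb{T}^d_m)} = \|\nabla f\|_{L^q L^p(\mathbb{R}^d)}$, yielding
\[
\int_{\mathbb{T}^d_m}\mathbb{E}_x^m\bigl[\sup_{t\le T}|\textstyle\int_0^t \Delta f(s,X^m_s)\,\mathrm{d}s|^p\bigr]\mathrm{d}x \;\le\; C\,T^{p(1/2-1/q)}\|\nabla f\|^p_{L^q L^p(\mathbb{R}^d)},
\]
with $C$ \emph{uniform in} $m$. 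This is the key gain from stationarity: integrating the pointwise expectation over the torus against Lebesgue exactly cancels the $1/m^d$ factor coming from the uniform density, which is ultimately why the same bound can survive $m\to\infty$.

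The main technical obstacle is the final passage to the limit, i.e.\ identifying the torus integral with $J$ as $m\to\infty$. For fixed $x$, the torus process $X^{m,x}$ can be lifted to the $\mathbb{R}^d$-solution $Y^{m,x}$ with drift $b^{(m)}$; for $x$ in a fixed ball $B(0,R)$ and $m\gg R$, $Y^{m,x}$ agrees with the solution $X^x$ of the original SDE on the event that $Y^{m,x}|_{[0,T]}\subset B(0,m/4)$, since $b^{(m)} = b$ there. A Bernstein/BDG estimate using the uniform $L^\infty$-bound on $b^{(m)}$ restricted to $B(0,m/4)$ shows that the complement event has probability $\to 0$ as $m\to\infty$, uniformly for $x\in B(0,R)$, and on this high-probability event the compactly supported $\int_0^t\Delta f(s,Y^{m,x}_s)\mathrm{d}s$ coincides with $\int_0^t\Delta f(s,X^{m,x}_s)\mathrm{d}s$ (the torus integral). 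Combining weak convergence $Y^{m,x}\Rightarrow X^x$ with Fatou's lemma then gives
\[
\int_{B(0,R)}\mathbb{E}_x\bigl[\sup_{t\le T}|\textstyle\int_0^t\Delta f(s,X^x_s)\mathrm{d}s|^p\bigr]\mathrm{d}x \;\le\; C\,T^{p(1/2-1/q)}\|\nabla f\|^p_{L^q L^p(\mathbb{R}^d)}
\]
uniformly in $R$, and monotone convergence $R\to\infty$ yields the bound on $J$, completing the proof after inserting the factor $\|\eta\|_{L^\infty}$ from the reduction step.
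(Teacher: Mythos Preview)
Your proof is correct and follows the same core idea as the paper --- approximate by periodic problems on tori $\mathbb{T}^d_m$ of growing side length, apply Lemma~\ref{Itotricklemma} there, and pass to the limit --- but the execution differs in several places worth noting.

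First, the paper periodizes $b$, $\eta$, $f$ by summing translates $g^m(t,x)=\sum_{k\in\mathbb{Z}^d} g(t,x+km)$, which requires first reducing to compactly supported $b$; this reduction is where the hypothesis $b,\nabla b\in L^\infty_{\mathrm{loc}}L^s$ actually enters (via an approximation $b^{(\ell)}=b(\varphi_{\ell^{-1}}A(b-\phi_\ell\ast b))$ and Young's inequality to kill $\phi_\ell\ast b$). Your cutoff-of-the-potential construction sidesteps this and does not use the $L^s$ hypothesis at all --- your attribution of that hypothesis to the existence of $A$ is not right; a local smooth antisymmetric potential exists from $b\in C^\infty$ and $\nabla\cdot b=0$ alone. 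Second, for the limit the paper shows $\mathbb{E}[\sup_{t\le T}|X_t-X^m_t|^p]\to 0$ by Gronwall (using locally uniform convergence $b^m\to b$ and uniform bounds), whereas you localize: $Y^{m,x}$ and $X^x$ agree up to the exit time from $B(0,m/4)$, and that exit time exceeds $T$ with probability $\to 1$ uniformly for $x\in B(0,R)$ since $b\in C^\infty_b$ is globally bounded. On that event the two additive functionals coincide exactly and both are bounded by $(T\|\Delta f\|_\infty)^p$, so you actually get $L^1$-convergence of the expectations (Fatou is not even needed). Both routes work; yours is arguably more direct and shows the $L^s$ assumption is not essential for this lemma.
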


\begin{proof}
  We first assume $\eta \in C^{\infty}_c$ and $b \in C (\mathbb{R}_+ ;
  C_c^{\infty} (\mathbb{R}^d))$. We define
  \[ g^m (t, x) \assign \sum_{k \in \mathbb{Z}^d} g (t, x + k m), \]
  for $g \in \{ b, \eta, f \}$. Since $b^m$ and $\eta^m$ are smooth and
  $m$-periodic, we interpret them as elements of $C^{\infty}
  (\mathbb{T}^d_m)$, where $\mathbb{T}^d_m$ is the torus of length $m$. Let
  $X^m : \mathbb{R}_+ \rightarrow \mathbb{R}^d$ be a solution to
  \begin{equation}
    \mathd X^m_t = b^m (t, X^m_t) \mathd t + \sqrt{2} \mathd B_t,
    \label{periodizedSDE}
  \end{equation}
  with $X^m (0) \sim \eta  \tmop{dLeb}_{\mathbb{R}^d}$. Then, the projection
  $Y^m (0) = X^m (0) / (m\mathbb{Z}^d)$ has density $\eta^m$ w.r.t.
  $\tmop{Leb}_{\mathbb{T}^d_m}$. Switching to the invariant measure, we obtain
  as in the proof of Theorem~\ref{tightnesstorusBesov} from H{\"o}lder's
  inequality and Jensen's inequality
  \begin{eqnarray*}
    \mathbb{E} [| \Phi (Y^m) |] & \leqslant & \| \eta^m \|_{L^{\infty}
    (\mathbb{T}^d_m)} \int_{\mathbb{T}^d_m} \mathbb{E}_{y_0} [| \Phi (Y^m) |]
    \mathd y_0\\
    & = &\| \eta^m \|_{L^{\infty} (\mathbb{T}^d_m)} m^d
    \mathbb{E}_{\tmop{stat}} [| \Phi (Y^m) |],
  \end{eqnarray*}
  where $\mathbb{E}_{\tmop{stat}}$ is the expectation under the stationary
  initial distribution $m^{- d} \tmop{Leb} (\mathbb{T}^d_m)$. In particular,
  we get from the same arguments as in Lemma~\ref{Itotricklemma}
  \begin{eqnarray*}
    \mathbb{E} \left[ \sup_{0 \leqslant t \leqslant T} \left| \int_0^t
    \mathLaplace f^m (s, X^m_s) \mathd s \right|^p \right] &=&\mathbb{E} \left[
    \sup_{0 \leqslant t \leqslant T} \left| \int_0^t \mathLaplace f^m (s,
    Y^m_s) \mathd s \right|^p \right]\\
    &\leqslant & \| \eta^m \|_{L^{\infty} (\mathbb{T}^d_m)} m^d
    \mathbb{E}_{\tmop{stat}} \left[ \sup_{0 \leqslant t \leqslant T} \left|
    \int_0^t \mathLaplace f^m (s, Y^m_s) \mathd s \right|^p \right]\\
    &\lesssim & \| \eta^m \|_{L^{\infty} (\mathbb{T}^d_m)} m^d
    T^{p \left( \frac{1}{2} - \frac{1}{q} \right)} \| \nabla f^m \|^p_{L^q_T
    L^p (m^{- d} \mathbb{T}^d_m)}\\
    & = & \| \eta^m \|_{L^{\infty} (\mathbb{T}^d_m)} T^{p \left(
    \frac{1}{2} - \frac{1}{q} \right)} \| \nabla f^m \|^p_{L^q_T L^p
    (\mathbb{T}^d_m)} .
  \end{eqnarray*}
  Now it remains to pass to the limit $m \rightarrow \infty$ on both sides of
  this inequality. The convergence $\| \eta^m \|_{L^{\infty} (\mathbb{T}^d_m)}
  \rightarrow \| \eta \|_{L^{\infty} (\mathbb{R}^d)}$, $\| \nabla f^m
  \|^p_{L^q_T L^p (\mathbb{T}^d_m)} \rightarrow \| \nabla f \|^p_{L^q_T L^p
  (\mathbb{R}^d)}$ is clear since $\eta$ and $f$ are compactly supported.
  Hence it remains to show convergence of the left hand side. To justify this,
  we can use that
  \begin{equation}
    \mathbb{E} [\sup_{0 \leqslant t \leqslant T} | X_t - X^m_t |^p]
    \rightarrow 0, \label{verygoodconvergence}
  \end{equation}
  which by Gronwall's inequality reduces to showing
  \[ \int_0^T \mathbb{E} [\sup_{0 \leqslant s \leqslant t} | b (s, X_s) - b^m
     (s, X_s) |^p] \mathd t \rightarrow 0. \]
  This convergence holds by the dominated convergence theorem since $(b -
  b^m)$ is uniformly bounded in $s, x$, converges locally uniformly to $0$,
  and since $X$ has locally bounded trajectories.
  Using~\eqref{verygoodconvergence}, we obtain the following convergence by
  another application of the dominated convergence theorem together with the
  locally uniform convergence of $\Delta f^m$ to $\Delta f$
  \[ \lim_{m \rightarrow \infty} \mathbb{E} \left[ \sup_{0 \leqslant t
     \leqslant T} \left| \int_0^t (\mathLaplace f (s, X_s) - \mathLaplace f^m
     (s, X^m_s)) \mathd s \right|^p \right] = 0. \]
  Therefore,
  \begin{equation}
    \mathbb{E} \left[ \sup_{0 \leqslant t \leqslant T} \left| \int_0^t
    \mathLaplace f (s, X_s) \mathd s \right|^p  \right] \lesssim \| \eta
    \|_{L^{\infty}} T^{p \left( \frac{1}{2} - \frac{1}{q} \right)} \| \nabla f
    \|^p_{L^q L^p} . \label{compactlysupportedb}
  \end{equation}
  It remains to remove the assumption $\eta \in C^{\infty}_c$ and $b \in C
  (\mathbb{R}_+ ; C_c^{\infty} (\mathbb{R}^d))$. For $\eta$ we can simply use
  the Markov property. To approximate $b \in C (\mathbb{R}_+, C^{\infty}_b
  (\mathbb{R}^d))$ with $(b^{(\ell)})_{\ell} \subset C (\mathbb{R}_+ ;
  C_c^{\infty} (\mathbb{R}^d))$, we consider $\varphi \in C^{\infty}_c$ such
  that $\varphi \equiv 1$ on a ball around $0$ and we set $\varphi_a = \varphi
  (a \cdummy)$ for $a > 0$ and $\phi_a = \mathcal{F}^{- 1} (\varphi_a)$. Then,
  we define
  \[ b^{(\ell)} = b (\varphi_{\ell^{- 1}} A (b - \phi_{\ell} \ast b)) \in C
     (\mathbb{R}_+ ; C_c^{\infty} (\mathbb{R}^d)), \]
  for $\ell \in \mathbb{N}$, where $b (A) = \nabla \cdummy A$ is the map from
  (\ref{bofA}), which is local. We claim that $b^{(\ell)} \rightarrow b$ and
  $\nabla b^{(\ell)} \rightarrow \nabla b$ locally uniformly in $(t, x)$ as
  $\ell \rightarrow \infty$. Indeed, for compact $K$ and large enough $\ell$
  we have
  \[ b^{(\ell)} |_K = b (A (b - \phi_{\ell} \ast b_1)) |_K = b|_K -
     \phi_{\ell} \ast b|_K . \]
  Next, by Young's convolution inequality $\| \phi_{\ell} \ast b
  \|_{L^{\infty}_T L^{\infty}} \lesssim \| b \|_{L^{\infty}_T L^s
  (\mathbb{R}^d)} \| \phi_{\ell} \|_{L^r}$, where $r \in (1, \infty]$ is such
  that $\frac{1}{s} + \frac{1}{r} = 1$, and $\| \phi_{\ell} \|_{L^r} = \ell^{-
  d + d / r} \| \phi \|_{L^r}$ converges to $0$. Therefore, $b^{(\ell)}
  \rightarrow b$ uniformly in $[0, T] \times K$ and by the same argument
  $\nabla (\phi_{\ell} \ast b_1) \rightarrow 0$ uniformly in $[0, T] \times
  K$. We also observe that $b^{(\ell)}$ is uniformly bounded in $C_T C^1_b
  (\mathbb{R}^d)$ since
  \[ b^{(\ell)} = b (\varphi_{\ell^{- 1}} A (b - \phi_{\ell} \ast b)) =
     \varphi_{\ell^{- 1}} (b - \phi_{\ell} \ast b) + \nabla \varphi_{\ell^{-
     1}}^T A (b - \phi_{\ell} \ast b), \]
  and the first term is uniformly bounded in $C_T C^1_b$ by the same arguments
  as above, while for the second term
  \begin{eqnarray*}
    \| \nabla \varphi_{\ell^{- 1}}^T A (b_1 - \phi_{\ell} \ast b_1)
    \|_{L^{\infty}_T L^{\infty}} & \lesssim & \| \nabla \varphi_{\ell^{- 1}}
    \|_{L^{\infty}} \| A (b_1 - \phi_{\ell} \ast b) \|_{B^0_{\infty, 1}}\\
    & \lesssim & \ell^{- 1} \| \nabla \varphi \|_{L^{\infty}} \ell \| b
    \|_{B^0_{\infty, 1}} \\
    &=& \| \nabla \varphi \|_{L^{\infty}} \| b
    \|_{B^0_{\infty, 1}} .
  \end{eqnarray*}
  We argue similarly for the derivatives of $b^{(\ell)}$. From here it follows
  as above that the solution $X^{(\ell)}$ to the SDE with drift $b^{(\ell)}$
  and initial condition $\eta$ satisfies
  \[ \lim_{\ell \rightarrow \infty} \mathbb{E} \left[ \sup_{0 \leqslant t
     \leqslant T} \left| \int_0^t (\mathLaplace f (s, X_s) - \Delta f (s,
     X^{(\ell)}_s)) \mathd s \right|^p \right] = 0, \]
  so that our claim follows.
\end{proof}

If the drift is not divergence free, $b = b_1 + b_2$ with $b_2 \neq 0$, we
could use the periodic approximation as in the previous proof to bound
\[ \mathbb{E} \left[ \mathcal{E} \left( \int_0^{\cdummy} \sqrt{2} b_2^m (s,
   X^m_s) \cdummy \mathd B_s \right)_T^p \right] \leqslant \| \eta^m
   \|_{L^{\infty} (\mathbb{T}^d_m)} m^d \mathbb{E}_{\tmop{stat}} \left[
   \mathcal{E} \left( \int_0^{\cdummy} \sqrt{2} b_2^m (s, X^m_s) \cdummy
   \mathd B_s \right)_T^p \right] . \]
Unfortunately, the estimate that we used for the stochastic exponential in the
periodic case does not produce a factor $m^{- d}$ and therefore we need to
argue differently.

\begin{lemma}[Novikov type bound]
  \label{NoviovboundEuclid}Let $b$, $X$ and $\eta$ be as in the previous
  lemma. Then, for all $T > 0$, $p > 0$ and $r \in [d, \infty]$ there exists
  $c > 0$ such that for all $a \in L^4_T B^0_{2 r, 1, 2}$
  \begin{equation}
    \mathbb{E} \left[ \mathcal{E} \left( \int_0^{\cdummy} a (s, X_s) \cdummy
    \mathd B_s \right)_T^p \right] \lesssim_T (1 + \| \eta \|_{L^{\infty}
    (\mathbb{R}^d)}) e^{c \| a \|^4_{L^4_T B^0_{2 r, 1, 2}}},
    \label{NovikovEuclid}
  \end{equation}
  where the implicit constant on the right hand side and $c > 0$ are
  independent of $b$.
\end{lemma}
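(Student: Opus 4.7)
I would follow the strategy of Lemma~\ref{stochexponential}, adapted to the non-compact setting where the Lebesgue measure is not a probability measure. As in that proof, Cauchy--Schwarz together with the identity $\mathcal E(M)^p=\mathcal E(2pM)^{1/2}\exp(p(p-\tfrac12)\langle M\rangle)$ reduces the task to bounding
\[
\mathbb E_\eta\!\left[\exp\!\Big(c\int_0^T|a(s,X_s)|^2\,\mathd s\Big)\right]
\]
by $(1+\|\eta\|_\infty)\exp(c'\|a\|^4_{L^4_T B^0_{2r,1,2}})$. On the torus, the key decomposition was $|a|^2=\overline{|a|^2}+\Delta\Delta^{-1}(|a|^2-\overline{|a|^2})$, but the mean $\overline{|a|^2}$ is meaningless on $\mathbb R^d$. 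I would instead use the Bessel-potential splitting $|a|^2=F-\Delta F$ with $F(s,\cdummy)\assign(1-\Delta)^{-1}|a(s,\cdummy)|^2$. Elliptic regularity in the form $(1-\Delta)^{-1}:B^0_{r,1}\to B^2_{r,1}$, combined with the embedding $B^2_{r,1}\hookrightarrow B^1_{\infty,1}$ (valid thanks to $r\geqslant d$) and with the bound $\||a|^2\|_{B^0_{r,1}}\lesssim\|a\|_{B^0_{2r,1,2}}^2$ from Lemma~\ref{estimateforsquarenorm}, gives $\|F(s)\|_{L^\infty}+\|\nabla F(s)\|_{L^\infty}\lesssim \|a(s)\|_{B^0_{2r,1,2}}^2$.

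The first summand $\int_0^T F(s,X_s)\,\mathd s$ is bounded deterministically by $T^{1/2}\|F\|_{L^2_T L^\infty}\lesssim T^{1/2}\|a\|_{L^4_T B^0_{2r,1,2}}^2$, so contributes a factor $\exp(c'\|a\|^4_{L^4_T B^0_{2r,1,2}}+c'T)$ by AM--GM. For the second summand $\int_0^T\Delta F(s,X_s)\,\mathd s$ I would apply the Euclidean It\^o trick of Lemma~\ref{ItotricklemmaEuclid} (after truncating $F$ by compactly supported approximations, using $\nabla F\in L^r\cap L^\infty$) with $q=2$ and arbitrary spatial exponent $p=n\geqslant 2$, obtaining
\[
\mathbb E\Big[\Big|\int_0^T\Delta F(s,X_s)\,\mathd s\Big|^n\Big]^{1/n}\;\lesssim\;\sqrt{n}\,\|\eta\|_\infty^{1/n}\,\|\nabla F\|_{L^2_T L^n},
\]
with the $\sqrt n$ tracked through the Burkholder--Davis--Gundy constant in the proof of Lemma~\ref{ItotricklemmaEuclid}. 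Interpolating between $\|\nabla F\|_{L^2_T L^r}$ and $\|\nabla F\|_{L^2_T L^\infty}$, both controlled by $\|a\|^2_{L^4_T B^0_{2r,1,2}}$, makes the right-hand side essentially independent of $n\geqslant r$, so the random variable $\int_0^T\Delta F\,\mathd s$ is sub-Gaussian with parameter $\sigma\lesssim\|a\|^2_{L^4_T B^0_{2r,1,2}}$. Summing the exponential power series and isolating the $n=0,1$ terms, where $\|\eta\|_\infty^{1/n}$ may be large, produces the $(1+\|\eta\|_\infty)$ prefactor, while the sub-Gaussian tail from the higher moments (where $\|\eta\|_\infty^{1/n}\to 1$) yields $\exp(c'\|a\|^4_{L^4_T B^0_{2r,1,2}})$. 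Combining both pieces concludes the estimate.

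The main obstacle is the absence of a clean forward--backward identity on $\mathbb R^d$ under the non-stationary initial law $\eta\,\mathd\tmop{Leb}$: a direct periodization to $\mathbb T^d_m$ followed by Lemma~\ref{stochexponential} would produce a factor $\|\eta^m\|_\infty m^d$ that does not cancel as $m\to\infty$, unlike in the proof of Lemma~\ref{ItotricklemmaEuclid} where the $m^d$ was absorbed into the $m^{-d/p}$ scaling of the spatial $L^p$ norm on $\mathbb T^d_m$. Passing through Lemma~\ref{ItotricklemmaEuclid} at the level of polynomial moments and then upgrading to an exponential bound via the sub-Gaussian argument is the essential workaround; it is also the source of the $(1+\|\eta\|_\infty)$ prefactor and of the assumption $r\geqslant d$, which is precisely what is needed for $(1-\Delta)^{-1}$ to map $|a|^2\in B^0_{r,1}$ to $\nabla F\in L^\infty$ through the critical Besov embedding and hence to make the sub-Gaussian parameter finite.
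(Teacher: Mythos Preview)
Your proposal is correct and follows essentially the same route as the paper: the Bessel splitting $|a|^2=(1-\Delta)^{-1}|a|^2-\Delta(1-\Delta)^{-1}|a|^2$, a deterministic $L^\infty$ bound on the first piece, the It\^o trick from Lemma~\ref{ItotricklemmaEuclid} on the second piece with the BDG constant $\sqrt{N}$ tracked, the Besov embedding $B^1_{r,1}\hookrightarrow L^N$ (your ``interpolation'') to make the moment bound uniform in $N$, and summation via Stirling are exactly what the paper does. One minor correction: your explanation of the prefactor is slightly off---it is not that $\|\eta\|_\infty^{1/n}$ is large for small $n$ and tends to $1$ for large $n$, but rather that the It\^o-trick moment bound reads $\mathbb E[|\cdot|^N]\lesssim\|\eta\|_\infty\,C^N N^{N/2}x^N$ with $\|\eta\|_\infty$ appearing \emph{linearly} in every term; it therefore factors out of the whole series, and the additional $1$ comes from the deterministic part (or the $N=0$ term), giving $(1+\|\eta\|_\infty)$ in front of the full sum.
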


\begin{proof}
  As in the periodic case it suffices to bound $\mathbb{E} \left[ \exp \left(
  c \int_0^T | a (s, X_s) |^2 \mathd s \right) \right]$ for all $c > 0$, and
  again we allow $c > 0$ to change in every step. Since the case $r = \infty$
  can be treated directly with
  \[ \mathbb{E} \left[ \exp \left( c \int_0^T | a (s, X_s) |^2 \mathd s
     \right) \right] \leqslant \exp (c \| a \|^2_{L^2_T L^{\infty}
     (\mathbb{R}^d)}), \]
  we may assume $r < \infty$. We first argue for $a \in C^{\infty}_c ([0, T]
  \times \mathbb{R}^d)$. Expanding the exponential, we obtain
  \begin{eqnarray}
    \mathbb{E} \left[ \exp \left( c \int_0^T | a (s, X_s) |^2 \mathd s \right)
    \right] & = & \sum_N \frac{1}{N!} c^N \mathbb{E} \left[ \left| \int_0^T |
    a (s, X_s) |^2 \mathd s \right|^N \right] .  \label{taylorsum}
  \end{eqnarray}
  For even $N \geqslant r$ we estimate with the It{\^o} trick from
  Lemma~\ref{ItotricklemmaEuclid} above
  \begin{eqnarray*}
    \mathbb{E} \left[ \left| \int_0^T | a (s, X_s) |^2 \mathd s \right|^N
    \right] & \leqslant & 2^N \mathbb{E} \left[ \left| \int_0^T \mathLaplace
    (1 - \mathLaplace)^{- 1} | a (s, X_s) |^2 \mathd s \right|^N \right]\\
    &  & + 2^N \mathbb{E} \left[ \left| \int_0^T (1 - \mathLaplace)^{- 1} | a
    (s, X_s) |^2 \mathd s \right|^N \right]\\
    & \lesssim & 2^N \| \eta \|_{L^{\infty}} C_{\tmop{BDG}} (N)^N \| \nabla
    (1 - \mathLaplace)^{- 1} | a |^2 \|_{L^2_T L^N}^N\\
    &  & + 2^N \| (1 - \mathLaplace)^{- 1} | a |^2 \|_{L^1_T L^{\infty}}^N\\
    & \leqslant & 2^N \| \eta \|_{L^{\infty}} C_{\tmop{BDG}} (N)^N \| \nabla
    (1 - \mathLaplace)^{- 1} | a |^2 \|_{L^2_T L^N}^N\\
    &  & + 2^N T^{1 / 2} \| (1 - \mathLaplace)^{- 1} | a |^2 \|_{L^2_T
    L^{\infty}}^N,
  \end{eqnarray*}
  where $C_{\tmop{BDG}} (N)$ is the constant from the Burkholder-Davis-Gundy
  inequality \\ $\| M_T^{\ast} \|_{L^N} \leqslant C_{\tmop{BDG}} (N) \| \langle M
  \rangle_T \|_{L^{N / 2}}^{1 / 2}$ for continuous local martingales $M$,
  which was implicitly used in the proof of Lemma \ref{Itotricklemma}, and we
  may restrict to local martingales given as stochastic integrals with respect
  to Brownian motion. In that case
  \[ C_{\tmop{BDG}} (N) \lesssim \sqrt{N} . \]
  Indeed, in Section 3 of {\cite{Burgess76}} it is shown that the optimal
  choice of $C_{\tmop{BDG}} (2 n)$ for stochastic integrals with respect to
  Brownian motion is given by the largest positive zero $x_0$ of the $2 n$-th
  Hermite polynomial. In Chapter VI, 6.2. of {\cite{Szegoe75}} the bound $|
  x_0 | \lesssim \sqrt{2 n}$ is shown.
  
  We now use the embedding $B^0_{s, 1} \hookrightarrow L^s$, which is uniform
  in $s$ as can be seen by an expansion in Littlewood-Paley blocks, and then
  Besov embedding (which is also uniform in the parameters, see Theorem 21.23
  in {\cite{vanZuijlen22}}) with $0 \leqslant 1 - d \left( \frac{1}{r} -
  \frac{1}{N} \right)$ to obtain
  \begin{eqnarray*}
    \| \nabla (1 - \mathLaplace)^{- 1} | a |^2 \|_{L^2_T L^N} & \lesssim & \|
    \nabla (1 - \mathLaplace)^{- 1} | a |^2 \|_{L^2_T B^0_{N, 1}}\\
    & \lesssim & \| \nabla (1 - \mathLaplace)^{- 1} | a |^2 \|_{L^2_T B^1_{r,
    1}}\\
    & \lesssim & \| | a |^2 \|_{L^2_T B^0_{r, 1}}^N .
  \end{eqnarray*}
  The same argument gives
  \[ \| (1 - \mathLaplace)^{- 1} | a |^2 \|_{L^2_T L^{\infty}}^N \lesssim \| |
     a |^2 \|_{L^2_T B^0_{r, 1}}^N, \]
  and therefore for some $C > 0$,
  \begin{eqnarray*}
    c^N \mathbb{E} \left[ \left| \int_0^t | a (s, X_s) |^2 \mathd s \right|^N
    \right] & \lesssim_T & (1 + \| \eta \|_{L^{\infty}}) C^N N^{N / 2} \| | a
    |^2 \|_{L^2_T B^0_{r, 1}}^N .
  \end{eqnarray*}
  If $N \geqslant r$ is odd, we simply bound $\mathbb{E} [| \cdummy |^N]
  \leqslant \mathbb{E} [| \cdummy |^{N + 1}]^{N / N + 1}$ and then apply the
  above estimate which thus also holds true in the odd case by changing the
  value of $C$.
  
  For $N < r$ we simply estimate $\mathbb{E} \left[ \left| \int_0^t | a (s,
  X_s) |^2 \mathd s \right|^N \right] \leqslant \mathbb{E} \left[ \left|
  \int_0^t | a (s, X_s) |^2 \mathd s \right|^M \right]^{N / M}$ where $M
  \geqslant r$, and thus we obtain the same bound up to redefining $C$.
  Revisiting (\ref{taylorsum}), it remains to check that
  \[ \sum_{N = 0}^{\infty} \frac{1}{N!} N^{N / 2} C^N x^N \lesssim \sum_{N =
     0}^{\infty} \frac{1}{\sqrt{N!}} C^N x^N \lesssim e^{c x^2}, \]
  which follows from Stirling's formula $N^N C^N \gtrsim N! \gtrsim N^N C^{-
  N}$ for some $C > 0$ (which may change in each step in the computation
  above). Thus, we arrive at
  \begin{eqnarray*}
    \mathbb{E} \left[ \exp \left( c \int_0^T | a (s, X_s) |^2 \mathd s \right)
    \right] & \lesssim & (1 + \| \eta \|_{L^{\infty}}) \exp (C \| | a |^2
    \|^2_{L^2_T B^0_{r, 1}}) .
  \end{eqnarray*}
  Using Lemma (\ref{estimateforsquarenorm}) we can bound
  \[ \| | a |^2 \|_{L^2_T B^0_{r, 1}} \lesssim \| a \|_{L^4_T B^0_{2 r, 1,
     2}}^2 . \]
  Finally, approximating general $a \in L^4_T B^0_{2 r, 1, 2}$ with elements
  in $C^{\infty}_c ([0, T] \times \mathbb{R}^d)$ and applying the Fatou's
  lemma on the left hand side we can conclude the proof.
\end{proof}

\begin{definition}[Energy solution]
  \label{defenergysolutionsEuclid}Let $b : \mathbb{R}_+ \rightarrow
  \mathcal{S}' (\mathbb{R}^d, \mathbb{R}^d)$ be such that for all $T > 0$ and
  $f \in C^{\infty}_c ([0, T] \times \mathbb{R}^d)$ we have $b \cdummy \nabla
  f \in L^2_T H^{- 1}$. Let $X$ be a stochastic process with values in $C
  (\mathbb{R}_+ ; \mathbb{R}^d)$ such that for all $T > 0$:
  \begin{enumerateroman}
    \item \label{incompressibleEucild}$X$ is {\tmem{incompressible in
    probability}}, i.e. for all $\varepsilon > 0$ and $T > 0$ there exists $M
    = M (\varepsilon, T) > 0$ such that for all $A \in \mathcal{B}
    (\mathbb{R}^d)$ with the Lebesgue measure $\tmop{Leb} (A)$
    \[ \mathbb{P} (X_t \in A) \leqslant \varepsilon + M \cdot \tmop{Leb} (A),
       \qquad t \in [0, T], \]
    and
    \[ \mathbb{P} \left( \sup_{t \leqslant T} \left| \int_0^t f (s, X_s)
       \mathd s \right| > \delta \right) \leqslant \varepsilon +
       \frac{M}{\delta} \| f \|_{L^1_T L^1 (\mathbb{R}^d)}, \]
    for all for all $f \in C^{\infty}_c ([0, T] \times \mathbb{R}^d)$.
    
    \item \label{admissibleEucild}X is {\tmem{admissible in
    probability}}/satisfies an {\tmem{energy estimate in probability}}, i.e.
    for all $\varepsilon > 0$ and $T > 0$ there exists $M = M (\varepsilon, T)
    > 0$ such that for all $f \in C^{\infty}_c ([0, T] \times \mathbb{R}^d)$
    \begin{equation}
      \mathbb{P} \left( \sup_{t \leqslant T} \left| \int_0^t f (s, X_s) \mathd
      s \right| > \delta \right) \leqslant \varepsilon + \frac{M}{\delta} \| f
      \|_{L^2_T H^{- 1} (\mathbb{R}^d)} . \label{ItotrickboundEuclid}
    \end{equation}
    \item \label{martingalepropertyEucild}For any $f \in C^{\infty}_c ([0, T]
    \times \mathbb{R}^d)$, the process
    \[ M^f_t = f (t, X_t) - f (0, X_0) - \int_0^t (\partial_s + \mathLaplace +
       b \cdummy \nabla) f (s, X_s) \mathd s, \qquad t \in [0, T], \]
    is a (continuous) local martingale in the filtration generated by $X$,
    where the integral is defined as
    \[ \int_0^t (\partial_s + \mathLaplace + b \cdummy \nabla) f (s, X_s)
       \mathd s \assign I ((\partial_s + \mathLaplace + b \cdummy \nabla)
       f)_t, \]
    with $I$ the unique continuous extension from $C^{\infty}_c ([0, T] \times
    \mathbb{R}^d)$ to $L^2_T H^{- 1} (\mathbb{R}^d)$ of the map $g \mapsto
    \int_0^{\cdummy} g (s, X_s) \mathd s$ taking values in the continuous
    adapted stochastic processes equipped with the topology of uniform
    convergence on compacts (ucp-topology), and the extension $I$ exists by
    (\ref{ItotrickboundEuclid}).
    
    \item \label{quadraticvariationEuclid}The local martingale $M^f$ from
    \ref{martingalepropertyEucild} has quadratic variation
    \[ \langle M^f \rangle_t = \int_0^t | \nabla f (s, X_s) |^2 \mathd s . \]
  \end{enumerateroman}
  Then $X$ is called an {\tmem{energy solution}} to the SDE $\mathd X_t = b
  (t, X_t) \mathd t + \sqrt{2} \mathd W_t$.
\end{definition}

Note that \ref{quadraticvariation} implies that the local martingale from
\ref{martingaleproperty} is a proper martingale.

\begin{theorem}[Existence of energy solutions]
  \label{tightnesstorusBesovEuclid}Let $\mu \ll \tmop{Leb}$ be a probability
  measure on $\mathbb{R}^d$, and let $b_i : \mathbb{R}_+ \rightarrow
  \mathcal{S}' (\mathbb{R}^d, \mathbb{R}^d), i \in \{ 1, 2 \}$ with $\nabla
  \cdummy b_1 \equiv 0$ be such that
  \begin{enumerateroman}
    \item \label{exis2Eucild}$b_1 = b (A), A \in L^q_{\tmop{loc}} L^p$ or $b_1
    \in L^q_{\tmop{loc}} B^{- 1}_{p, 2}$, for some $p \left( \frac{1}{2} -
    \frac{1}{q} \right) > 1$, where $p \in [2, \infty)$ and $q \in [2,
    \infty]$;
    
    \item \label{exis3Eucild}$b_2 \in L^4_{\tmop{loc}} B^0_{2 r, 1, 2} $ for
    some $r \in [d, \infty]$.
  \end{enumerateroman}
  Let $b = b_1 + b_2$ and let $X^n : \mathbb{R}_+ \rightarrow \mathbb{R}^d$
  solve
  \[ \mathd X_t^n = b^n (t, X_t^n) \mathd t + \sqrt{2} \mathd B_t, \]
  with $X^n_0 \sim \mu$, where we recall that $b^n = b \ast \rho^n$. Then,
  $(X^n)_{n \in \mathbb{N}}$ is tight on $C (\mathbb{R}_+, \mathbb{R}^d)$. Any
  limit point is an energy solution to $\mathd X_t = b (t, X_t) \mathd t +
  \sqrt{2} \mathd W_t$.
\end{theorem}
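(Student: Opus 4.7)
The plan is to mirror the proof of Theorem \ref{tightnesstorusBesov}, with the periodic Itô trick (Lemma \ref{Itotricklemma}) and Novikov bound (Lemma \ref{stochexponential}) replaced by their Euclidean counterparts, Lemmas \ref{ItotricklemmaEuclid} and \ref{NoviovboundEuclid}. The new feature is that these Euclidean estimates produce a factor $\| \eta \|_{L^{\infty}}$, where $\eta = \frac{\mathd\mu}{\mathd\tmop{Leb}}$, while the theorem only assumes $\eta \in L^1$. A preliminary truncation is therefore the only genuinely new technical ingredient: given $\varepsilon > 0$, choose $N$ so that $\int \eta \1_{\{\eta > N\}} \mathd y_0 < \varepsilon$, so that for any event $A$
\[
\mathbb{P}(X^n \in A) \leqslant \varepsilon + N \int \mathbb{P}_{y_0}(X^n \in A) \mathd y_0,
\]
reducing all estimates to the setting of an initial ``measure'' proportional to $\mathd\tmop{Leb}_{\mathbb{R}^d}$, which is exactly the case handled by Lemmas \ref{ItotricklemmaEuclid} and \ref{NoviovboundEuclid}.

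Assuming first $b_2 \equiv 0$, write $b_1^n = \mathLaplace \mathLaplace^{-1} \nabla \cdummy A(b_1^n)$ and apply Lemma \ref{ItotricklemmaEuclid} to the increments of the drift to obtain, for $0 \leqslant s \leqslant t \leqslant T$,
\[
\mathbb{E}\left[ \left| \int_s^t b_1^n(r, X^n_r) \mathd r \right|^p \right] \lesssim \| \eta \wedge N \|_{L^{\infty}} | t - s |^{p(1/2 - 1/q)} \| A(b^n) \|_{L^q_T L^p(\mathbb{R}^d)}^p,
\]
uniformly in $n$, using that $\mathLaplace^{-1} \nabla (\nabla \cdummy \cdot)$ is of Calderón--Zygmund type and hence bounded on $L^s(\mathbb{R}^d)$ for $s \in (1, \infty)$; in contrast to the periodic setting, there is no separate zero-mode term to isolate. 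Since $p(1/2 - 1/q) > 1$, Kolmogorov's continuity criterion then yields tightness of the drift integral, and hence of $(X^n)_n$. For general $b_2$, I would introduce the Girsanov change of measure removing $b_2^n$, bound the resulting stochastic exponential uniformly in $L^p$ via Lemma \ref{NoviovboundEuclid}, and combine with Cauchy--Schwarz and the divergence-free tightness just established, analogously to the derivation of (\ref{eq:ExpXn-bound}).

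For a limit point $X$, the Portmanteau theorem transfers the bounds above to the incompressibility condition \ref{incompressibleEucild} and the energy estimate \ref{admissibleEucild}. To verify the martingale property \ref{martingalepropertyEucild}, I consider for $f \in C^{\infty}_c([0, T] \times \mathbb{R}^d)$ the approximating martingales
\[
M^{f, n}_t = f(t, X^n_t) - f(0, X^n_0) - \int_0^t (\partial_s + \mathLaplace + b^n \cdummy \nabla) f(s, X^n_s) \mathd s,
\]
prove joint tightness of $(X^n, M^{f, n})$, and invoke Proposition IX.1.17 of \cite{Jacod2003} to identify the limit as a local martingale. Convergence of the drift integral uses the bound (\ref{driftestimateforsmoothf}) combined with the energy estimate from \ref{admissibleEucild}, and the quadratic variation $\langle M^f \rangle_t = \int_0^t | \nabla f(s, X_s) |^2 \mathd s$ is inherited via the same proposition.

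The main technical obstacle has already been absorbed into Lemmas \ref{ItotricklemmaEuclid} and \ref{NoviovboundEuclid}, whose proofs compensate for the absence of a stationary probability measure on $\mathbb{R}^d$ via the periodic approximations $\mathbb{T}^d_m$ with $m \rightarrow \infty$. Given these tools, the Euclidean existence result follows by adapting the periodic argument almost verbatim; the $L^{\infty}$-truncation of $\eta$ above and the restriction to compactly supported test functions in Definition \ref{defenergysolutionsEuclid} are the only substantive adjustments.
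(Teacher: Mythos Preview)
Your overall strategy is correct and matches the paper's, but there are two genuine gaps.

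First, your truncation formula $\mathbb{P}(X^n \in A) \leqslant \varepsilon + N \int \mathbb{P}_{y_0}(X^n \in A)\, \mathd y_0$ reduces to an initial ``measure'' equal to $N\cdot\tmop{Leb}_{\mathbb{R}^d}$, which is infinite; Lemmas~\ref{ItotricklemmaEuclid} and~\ref{NoviovboundEuclid} are stated for a \emph{probability} density $\eta \in L^\infty$, and the right-hand side of your inequality is in general infinite (e.g.\ for the modulus-of-continuity events needed for tightness). The paper fixes this by normalizing: set $\tilde{\eta} = (\eta \wedge K)/\int(\eta \wedge K)$, which is a genuine bounded probability density, and then run the It\^o trick and Novikov bound under $\mathbb{P}_{\tilde{\eta}}$.

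Second, and more substantially, you only treat the sub-case $b_1 = b(A)$ with $A \in L^q_T L^p$. On $\mathbb{T}^d$ the alternative assumption $b_1 \in L^q_T B^{-1}_{p,2}$ implies $A(b_1) \in L^q_T L^p$ because $\Delta^{-1}\nabla$ is bounded on periodic Besov spaces; on $\mathbb{R}^d$ this fails due to the low-frequency singularity of $\Delta^{-1}$, so your decomposition $b_1^n = \Delta \Delta^{-1}\nabla\cdot A(b_1^n)$ is not available. The paper handles this case by writing $b_1 = (1-\Delta)(1-\Delta)^{-1}b_1$, applying the It\^o trick to the $\Delta(1-\Delta)^{-1}b_1$ piece, and bounding the remaining $(1-\Delta)^{-1}b_1$ piece directly via the maximum principle for the density of $X^n$. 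Relatedly, your reference to the estimate~(\ref{driftestimateforsmoothf}) for the convergence $b^n\cdot\nabla f \to b\cdot\nabla f$ requires $b \in L^2_T H^{-1}(\mathbb{R}^d)$, which is not assumed; the paper instead uses case-specific bounds that exploit the compact support of $f$ through H\"older pairings such as~(\ref{convcompinf}).
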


\begin{proof}
  The proof follows closely that of Theorem \ref{tightnesstorusBesov}. Again
  we first assume that $b_2 \equiv 0$ and that $\eta = \frac{\mathd \mu}{
  \tmop{dLeb}} \in L^{\infty} (\mathbb{R}^d)$. For the case $b_1 = b (A)$ with
  $A \in L^q_T L^p (\mathbb{R}^d)$ we use the same bound based on the It{\^o}
  trick as in Theorem~\ref{tightnesstorusBesov}:
  \begin{eqnarray*}
    \mathbb{E} \left[ \left| \int_s^t b^{n, i} (r, X_r^n) \mathd r \right|^p
    \right] & =&\mathbb{E} \left[ \left| \int_s^t \mathLaplace \mathLaplace^{-
    1} b^{n, i} (r, X_r^n) \mathd r \right|^p \right]\\
    & \lesssim &\| \eta \|_{L^{\infty} (\mathbb{R}^d)} | t - s |^{p \left(
    \frac{1}{2} - \frac{1}{q} \right)} \| \mathLaplace^{- 1} \nabla (\nabla
    \cdummy A_i (b^n)) \|^p_{L^q L^p (\mathbb{R}^d)}\\
    & \lesssim & \| \eta \|_{L^{\infty} (\mathbb{R}^d)} | t - s |^{p \left(
    \frac{1}{2} - \frac{1}{q} \right)} \| A (b^n) \|^p_{L^q L^p
    (\mathbb{R}^d)},
  \end{eqnarray*}
  using again the Calderon-Zygmund property of $f \mapsto \nabla \cdummy
  \mathLaplace^{- 1} (\nabla \cdummy f)$. For $b_1 \in L^q_T B^{- 1}_{p, 2}$,
  which now does not imply $A (b) \in L^q_T L^p$, we write $b = (1 - \Delta)
  (1 - \Delta)^{- 1} b$, and then we apply Lemma \ref{ItotricklemmaEuclid} to
  bound
  \begin{eqnarray*}
    \mathbb{E} \left[ \left| \int_s^t b^n (r, X_r^n) \mathd r \right|^p
    \right] & \lesssim &\mathbb{E} \left[ \left| \int_s^t (1 - \mathLaplace)^{-
    1} b^n (r, X_r^n) \mathd r \right|^p \right]\\
    & &+ \| \eta \|_{L^{\infty} (\mathbb{R}^d)} | t - s |^{p \left(
    \frac{1}{2} - \frac{1}{q} \right)} \| \nabla (1 - \mathLaplace)^{- 1} b^n
    \|^p_{L^q_T L^p (\mathbb{R}^d)}\\
    & \lesssim &| t - s |^{p \left( 1 - \frac{1}{q} \right)} \left( \int_s^t
    \mathbb{E} [| (1 - \mathLaplace)^{- 1} b^n (r, X_r^n) |^p]^{q / p} \mathd
    r \right)^{p / q}\\
    & & + \| \eta \|_{L^{\infty} (\mathbb{R}^d)} | t - s |^{p \left(
    \frac{1}{2} - \frac{1}{q} \right)} \| b^n \|^p_{L^q_T B_{p, 2}^{- 1}
    (\mathbb{R}^d)}\\
    & \lesssim &| t - s |^{p \left( 1 - \frac{1}{q} \right)} \| \eta
    \|_{L^{\infty} (\mathbb{R}^d)} \| b^n \|^p_{L^q_T B_{p, 2}^{- 2}
    (\mathbb{R}^d)}\\
    & &+ \| \eta \|_{L^{\infty} (\mathbb{R}^d)} | t - s |^{p \left(
    \frac{1}{2} - \frac{1}{q} \right)} \| b^n \|^p_{L^q_T B_{p, 2}^{- 1}
    (\mathbb{R}^d)}\\
    & \lesssim_T & \| \eta \|_{L^{\infty} (\mathbb{R}^d)} | t - s |^{p \left(
    \frac{1}{2} - \frac{1}{q} \right)} \| b^n \|^p_{L^q_T B_{p, 2}^{- 1}
    (\mathbb{R}^d)},
  \end{eqnarray*}
  where we used that the density $\eta^n$ of $X^n$ solves the Kolmogorov
  forward equation $\partial_t \eta^n = \Delta \eta^n - \nabla \cdummy (b^n
  \eta^n) = \Delta \eta^n - b^n \cdummy \nabla \eta^n$ which satisfies the
  maximum principle, so that $\| \eta^n (t) \|_{L^{\infty} (\mathbb{R}^d)}
  \leqslant \| \eta (0) \|_{L^{\infty} (\mathbb{R})}$. Since $p (\frac{1}{2} -
  \frac{1}{q}) > 1$, this proves tightness by Kolmogorov's continuity
  criterion.
  
  If $b_2 \neq 0$ and $\eta \in L^1 (\mathbb{R}^d)$, then we let $\varepsilon
  > 0$ and $K$ be large enough so that $\int \eta \1_{\{ \eta > K \}}
  \leqslant \varepsilon$ and we set $\tilde{\eta} = \tilde{\eta}_{\varepsilon}
  = \frac{\eta \wedge K}{\int (\eta \wedge K)}$. Note that
  \[ 1 \geqslant \int (\eta \wedge K) \geqslant \int \eta \1_{\{ \eta
     \leqslant K \}} = 1 - \int \eta \1_{\{ \eta > K \}} \geqslant 1 -
     \varepsilon, \]
  and thus $\eta \wedge K \leqslant \tilde{\eta} \leqslant \frac{K}{1 -
  \varepsilon} \backassign M$. Thus, we obtain for any $A \in \mathcal{B} (C
  ([0, T], \mathbb{R}^d))$
  \begin{eqnarray*}
    \mathbb{P} (X^n \in A) & \leqslant & \varepsilon + \int_{\mathbb{R}^d}
    (\eta (y_0) \wedge K) \mathbb{P}_{y_0} (X^n \in A) \mathd y_0\\
    & \leqslant & \varepsilon + \int_{\mathbb{R}^d} \tilde{\eta} (y_0)
    \mathbb{P}_{y_0} (X^n \in A) \mathd y_0\\
    & = & \varepsilon +\mathbb{E}_{\tilde{\eta}} \left[ \1_{\{ X_n \in A \}}
    \mathcal{E} \left( \int_0^{\cdummy} \sqrt{2} b_2^n (r, \tilde{X}_r^n)
    \cdot \mathd B_r \right)_T \right]\\
    & \leqslant & \varepsilon +\mathbb{P}_{\tilde{\eta}} (X_n \in
    A)^{\frac{1}{2}} \mathbb{E}_{\tilde{\eta}} \left[ \mathcal{E} \left(
    \int_0^{\cdummy} \sqrt{2} b_2^n (r, \tilde{X}_r^n) \cdot \mathd B_r
    \right)_T^2 \right]^{\frac{1}{2}},
  \end{eqnarray*}
  where $\mathbb{E}_{\tilde{\eta}}$ is the expectation with initial condition
  $\tilde{\eta} \in L^{\infty}$ and $\tilde{X}^n$ is the process with drift
  $b^n_2 \equiv 0$. By the Novikov bound and since $\| \tilde{\eta}
  \|_{L^{\infty}} \leqslant M$, we obtain up to a redefinition of $M = M
  (\varepsilon, T)$ by Young's inequality for products
  \[ \mathbb{P} (X^n \in A) \leqslant \varepsilon + M\mathbb{P}_{\tilde{\eta}}
     (X_n \in A) . \]
  Now we we obtain tightness of $(X^n)_{n \in \mathbb{N}}$ in $C
  (\mathbb{R}_+, \mathbb{R}^d)$ by the same argument as in
  Theorem~\ref{tightnesstorusBesov}, based on the energy estimate under
  $\mathbb{P}_{\tilde{\eta}}$.
  
  Incompressibility and energy estimate for the limit points also follow by
  the same argument as in Theorem~\ref{tightnesstorusBesov}, relying on the
  bound $\| \tilde{\eta} \|_{L^{\infty}} \leqslant M$. The crucial point for
  showing that any limit point satisfies the martingale problem property
  \ref{martingalepropertyEucild} is the convergence of $\mathcal{L}^n f =
  (\partial_r + \mathLaplace + b^n \cdummy \nabla) f$ to $\mathcal{L} f$ in
  $L^2_T H^{- 1}$ for $f \in C^{\infty}_c (\mathbb{R}  \times \mathbb{R}^d)$.
  If $b_1 = b (A)$ with $A \in L^q_T L^p$, we estimate (using
  (\ref{identityA}) below),
  \begin{eqnarray}
    \| \nabla \cdummy ((A - A^n) \cdummy \nabla f) \|_{L^2_T H^{- 1}} &
    \lesssim & \| (A - A^n) \cdummy \nabla f \|_{L^2_T L^2} \nonumber\\
    & \lesssim & \| A - A^n \|_{L^2_T L^p} \| \nabla f \|_{L^{\infty}_T
    L^{p'}},  \label{convcompinf}
  \end{eqnarray}
  where $\frac{1}{p} + \frac{1}{p'} = \frac{1}{2}$ and we possibly use less
  time integrability than available to obtain the convergence also if $q =
  \infty$. If $b_1 \in L^q_T B^{- 1}_{p, 2}$, we bound
  \[ \| (b_1 - b_1^n) \cdummy \nabla f \|_{L^2_T H^{- 1}} \simeq \| (b_1 -
     b_1^n) \cdummy \nabla f \|_{L^2_T B^{- 1}_{2, 2}} \lesssim \| b_1 - b_1^n
     \|_{L^2_T B^{- 1}_{p, 2}} \| \nabla f \|_{L^{\infty}_T B^{1 +
     \varepsilon}_{p', 2}}, \]
  where again $\frac{1}{p} + \frac{1}{p'} = \frac{1}{2}$. To estimate the
  $b_2$-term we use that $b_2 \in L^4_T L^{2 r}$ and then
  \[ \| (b_2 - b_2^n) \cdummy \nabla f \|_{L^2_T H^{- 1}} \lesssim \| (b_2 -
     b_2^n) f \|_{L^2_T L^2} \lesssim \| b_2 - b_2^n \|_{L^4_T L^{2 r}} \| f
     \|_{L^4_T L^{\frac{2 r}{r - 1}}} . \]
  Finally, the statement on the quadratic variation can be deduced in the same
  way as in the proof of Theorem~\ref{tightnesstorusBesov}.
\end{proof}

\section{Uniqueness of energy solutions}

To obtain uniqueness, we analyze the Kolmogorov backward equation (KBE) of
(\ref{MainSDE}) given by
\begin{equation}
  \partial_t u = \mathLaplace u + b \cdummy \nabla u . \label{MainKBE}
\end{equation}
There are two ways to overcome the singularity of $b_1$ in equation
(\ref{MainKBE}): We can formally ``pull out'' the gradient from $u$ or the
divergence from $A$, resulting in
\begin{equation}
  \partial_t u = \mathLaplace u + \nabla \cdummy (b_1 u) + b_2 \cdummy \nabla
  u, \label{MainKBEa}
\end{equation}
or
\begin{equation}
  \partial_t u = \mathLaplace u + \nabla \cdummy (A \cdummy \nabla u) + b_2
  \cdummy \nabla u . \label{MainKBEb}
\end{equation}
The first case is obvious and for the second case we note that due to the
antisymmetry of $A_{i j}$ we have $\sum_{i j} A_{i j} \partial_{i j} u = 0$
and thus,
\begin{equation}
  \nabla \cdummy (A \cdummy \nabla u) = \sum_i \partial_i \left( \sum_j A_{i
  j} \partial_j u \right) = \sum_{i j} \partial_i A_{i j} \partial_j u =
  \sum_j \partial_j u \sum_i \partial_i A_{i j} = b_1 \cdummy \nabla u .
  \label{identityA}
\end{equation}
\subsection{Uniqueness under the assumptions of Theorem
\ref{WellposednessBesov}}\label{Besovcase}

Before proving uniqueness, we start with some a priori estimates.

\begin{lemma}
  \label{Laplacebound}Let $b \in L^r (M), r \in (d, \infty]$. Then, for every
  $\varepsilon > 0$ there exists $\lambda_0 \geqslant 1$ such that for all
  $\lambda > \lambda_0$,
  \[ \| (\lambda - \mathLaplace)^{- 1 / 2} (b \cdummy \nabla u) \|_{L^2 (M)}
     \lesssim \varepsilon \| b \|_{L^r (M)} \| (\lambda - \mathLaplace)^{1 /
     2} u \|_{L^2 (M)} . \]
  For $b \in L^d (M)$ with $d \geqslant 3$, we can still bound
  \[ \| (1 - \mathLaplace)^{- 1 / 2} (b \cdummy \nabla u) \|_{L^2 (M)}
     \lesssim \| b \|_{L^d (M)} \| (1 - \mathLaplace)^{1 / 2} u \|_{L^2 (M)} .
  \]
\end{lemma}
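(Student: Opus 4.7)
The plan is a duality argument combined with Hölder's inequality, Sobolev embedding, and interpolation. By duality,
\[
\|(\lambda-\Delta)^{-1/2}(b\cdot\nabla u)\|_{L^2} = \sup_{\|v\|_{L^2}\le 1}\langle b\cdot\nabla u,\,(\lambda-\Delta)^{-1/2} v\rangle,
\]
so it suffices to control $\langle b\cdot\nabla u,\,w_\lambda\rangle$ with $w_\lambda \assign (\lambda-\Delta)^{-1/2}v$. Applying Hölder's inequality with exponents $(r,2,s)$, where $\tfrac{1}{s} = \tfrac{1}{2}-\tfrac{1}{r}$, yields
\[
|\langle b\cdot\nabla u,\,w_\lambda\rangle| \le \|b\|_{L^r}\|\nabla u\|_{L^2}\|w_\lambda\|_{L^s},
\]
and since $\|\nabla u\|_{L^2} \le \|(\lambda-\Delta)^{1/2}u\|_{L^2}$, everything reduces to bounding $\|w_\lambda\|_{L^s}$ appropriately.

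Next, I would control $\|w_\lambda\|_{L^s}$ by interpolating between $L^2$ and $L^{2d/(d-2)}$. On $L^2$, the Fourier multiplier $(\lambda+|\xi|^2)^{-1/2}$ is bounded by $\lambda^{-1/2}$, giving $\|w_\lambda\|_{L^2}\le \lambda^{-1/2}\|v\|_{L^2}$. On the other hand, $(-\Delta)^{1/2}(\lambda-\Delta)^{-1/2}$ has symbol $|\xi|/\sqrt{\lambda+|\xi|^2}\le 1$, so it is bounded on $L^2$, and Sobolev embedding $H^1\hookrightarrow L^{2d/(d-2)}$ (valid for $d\ge 3$, and trivially so on $\mathbb{T}^d$ after accounting for the zero mode) gives $\|w_\lambda\|_{L^{2d/(d-2)}} \lesssim \|v\|_{L^2}$, uniformly in $\lambda\ge 1$.

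For $r \in (d,\infty]$, one checks that $s = 2r/(r-2)$ lies strictly between $2$ and $2d/(d-2)$; the standard $L^p$-interpolation identity $\tfrac{1}{s}=\tfrac{\theta}{2}+\tfrac{1-\theta}{2d/(d-2)}$ gives $\theta = 1-\tfrac{d}{r}>0$, hence
\[
\|w_\lambda\|_{L^s}\;\lesssim\;\lambda^{-\theta/2}\|v\|_{L^2}.
\]
Choosing $\lambda_0$ so that $\lambda_0^{-\theta/2}\le \varepsilon$ delivers the first claim. For the endpoint $r=d$ (with $d\ge 3$), we simply have $s=2d/(d-2)$ and we do not interpolate: the Sobolev embedding estimate applied to $w_1 = (1-\Delta)^{-1/2}v$ gives the second bound directly, without any smallness factor.

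The only subtle point is ensuring that Sobolev embedding is available, which is why the critical endpoint is restricted to $d\ge 3$; on $\mathbb{T}^d$ one can absorb the zero Fourier mode into the $(\lambda-\Delta)^{-1/2}$ bound since $(1-\Delta)^{-1/2}$ (rather than $(-\Delta)^{-1/2}$) is used, and no additional work is needed. No further technical difficulty is expected.
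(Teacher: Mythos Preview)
Your argument is correct for $d \geqslant 3$ and is essentially the dual of the paper's route. The paper places $b\cdot\nabla u$ in $L^s$ with $s = \tfrac{2d}{d+2-4\delta} \in (1,2]$ via H\"older, then uses the Besov embedding chain $L^s \hookrightarrow B^0_{s,2} \hookrightarrow H^{-1+2\delta}$, after which $(\lambda-\Delta)^{-1/2}$ produces the factor $\lambda^{-\delta}$. You instead push $(\lambda-\Delta)^{-1/2}$ onto the test function and use the Sobolev embedding $H^1 \hookrightarrow L^{2d/(d-2)}$ interpolated against the $\lambda^{-1/2}$ bound on $L^2$. Your version avoids the Besov machinery, which is a mild simplification; the two exponents match since your $\theta = 1-d/r$ equals the paper's $2\delta$ once $\delta$ is chosen so that $d/(1-2\delta)=r$.

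There is one genuine gap: in $d=2$ your interpolation endpoint $2d/(d-2)$ becomes $\infty$, and $H^1$ does not embed into $L^\infty$ in two dimensions (the parenthetical about $\mathbb{T}^d$ and the zero mode does not rescue this). The fix is immediate: skip the two-endpoint interpolation and use the sharp embedding $H^{1-\theta} \hookrightarrow L^s$ directly, with $\theta = 1-d/r$ and $s = 2r/(r-2)$; this holds in every dimension since $1-\theta = d/r = d\bigl(\tfrac12 - \tfrac1s\bigr)$. Then the multiplier bound $(1+|\xi|^2)^{(1-\theta)/2}(\lambda+|\xi|^2)^{-1/2} \leqslant \lambda^{-\theta/2}$ for $\lambda \geqslant 1$ gives $\|w_\lambda\|_{L^s} \lesssim \lambda^{-\theta/2}\|v\|_{L^2}$ without ever touching the failing endpoint. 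This is exactly what the paper's Besov step accomplishes on the predual side.
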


\begin{proof}
  Let $\delta \in [0, 1 / 2]$ if $d \geqslant 3$ and $\delta \in (0, 1 / 2]$
  if $d = 2$ and let $s = d \left( \frac{d}{2} + 1 - 2 \delta \right)^{- 1} =
  \frac{2 d}{d + 2 - 4 \delta} \in (1, 2]$. Using Besov embedding and the fact
  that ${\| \cdummy \|  }_{B^0_{s, 2}} \lesssim {\| \cdummy \|  }_{L^s}$ since
  $s \in (1, 2]$ (see Theorem~2.40 in {\cite{BahouriCheminDanchin11}}), we
  have
  \begin{eqnarray}
    \| (\lambda - \mathLaplace)^{- 1 / 2} (b \cdummy \nabla u) \|_{L^2} &
    \leqslant & \lambda^{- \delta / 2} \| (1 - \mathLaplace)^{- 1 / 2 +
    \delta} (b \cdummy \nabla u) \|_{L^2} \nonumber\\
    & \simeq & \lambda^{- \delta / 2} \| b \cdummy \nabla u \|_{B^{- 1 + 2
    \delta}_{2, 2}} \nonumber\\
    & \lesssim & \lambda^{- \delta / 2} \| b \cdummy \nabla u \|_{B^0_{s, 2}}
    \nonumber\\
    & \lesssim & \lambda^{- \delta / 2} \| b \cdummy \nabla u \|_{L^s}
    \nonumber\\
    & \overset{\tiny{\frac{1}{t} + \frac{1}{2} = \frac{1}{s}}}{\leqslant} &
    \lambda^{- \delta / 2} \| b \|_{L^t} \| \nabla u \|_{L^2} \nonumber\\
    & \leqslant & \lambda^{- \delta / 2} \| b \|_{L^{d / (1 - 2 \delta)}} \|
    (1 - \mathLaplace)^{1 / 2} u \|_{L^2} \nonumber\\
    & \leqslant & \lambda^{- \delta / 2} \| b \|_{L^{d / (1 - 2 \delta)}} \|
    (\lambda - \mathLaplace)^{1 / 2} u \|_{L^2} .  \label{boundforpertterm}
  \end{eqnarray}
  The claim follows by choosing $\delta (r) \in [0, 1 / 2]$ (resp. $(0, 1 /
  2]$) such that $d / (1 - 2 \delta) = r$.
\end{proof}

The following simple a priori estimate is crucial, because it is uniform in
the divergence free drift $b_1$.

\begin{lemma}[Maximum principle \& energy estimate]
  \label{aprioriPDE}Let $T > 0$, let $b_i \in C ([0, T] ; \mathcal{S} (M))$,
  $i \in \{ 1, 2 \}$, and let $u_0 \in \mathcal{S}$ and let $u$ be the unique
  classical solution to
  \[ \partial_t u = \Delta u + b \cdummy \nabla u, \qquad u (0) = u_0, \]
  where $b = b_1 + b_2$ with $\nabla \cdummy b_1 \equiv 0$. Let for $p \in [2,
  \infty]$
  \[ K = \| b_2 \|^2_{L^2_T L^p} . \]
  Then $u$ satisfies
  \begin{eqnarray*}
    \| u \|_{C_T C_b} & \leqslant & \| u_0 \|_{L^{\infty}},\\
    \| u \|_{C_T L^2}^2 & \leqslant & e^K (\| u_0 \|_{L^2}^2 + K \| u_0
    \|_{L^{\infty}}^2),\\
    \| \nabla u \|_{L^2_T L^2}^2 & \leqslant & (1 + K e^K) \| u_0 \|_{L^2}^2 +
    K (K e^K + 1) \| u_0 \|_{L^{\infty}}^2 .
  \end{eqnarray*}
\end{lemma}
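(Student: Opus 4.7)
The plan is a standard energy/maximum principle argument for a smooth linear parabolic equation, where the whole point is to isolate the divergence-free part $b_1$ and treat only $b_2$ as a genuine perturbation. Since $b \in C([0,T];\mathcal S)$ and $u_0\in\mathcal S$ are smooth, classical existence and regularity for $u$ is not an issue and all manipulations below are justified pointwise.

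First I would prove the $L^\infty$ bound. The equation $\partial_t u = \Delta u + b\cdot\nabla u$ is a linear second order parabolic equation with bounded smooth coefficients and no zeroth order term, so the standard parabolic maximum principle (applied to $u-\|u_0\|_\infty$ and $u+\|u_0\|_\infty$; on $\mathbb R^d$ one uses that smoothness of $b,u_0\in\mathcal S$ propagates to spatial decay of $u$, so $u$ attains its sup/inf) yields $\|u(t)\|_{L^\infty}\le \|u_0\|_{L^\infty}$ for all $t\in[0,T]$.

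Next I would derive the $L^2$ energy identity by testing against $u$:
\begin{equation*}
\tfrac12\partial_t\|u\|_{L^2}^2 = -\|\nabla u\|_{L^2}^2 + \langle u,b_1\cdot\nabla u\rangle + \langle u,b_2\cdot\nabla u\rangle.
\end{equation*}
The $b_1$-term vanishes because $\langle u,b_1\cdot\nabla u\rangle=\tfrac12\langle b_1,\nabla u^2\rangle=-\tfrac12\langle\nabla\cdot b_1,u^2\rangle=0$; this is the only place where the structural assumption $\nabla\cdot b_1\equiv0$ enters, and it is the crucial cancellation that makes the estimate uniform in $b_1$. For the $b_2$-term I use Hölder with the triple $\tfrac{p-2}{2p}+\tfrac1p+\tfrac12=1$ (with the obvious modification when $p=\infty$), then interpolation $\|u\|_{L^{2p/(p-2)}}\le\|u\|_{L^2}^{(p-2)/p}\|u\|_{L^\infty}^{2/p}$, the maximum principle $\|u\|_{L^\infty}\le\|u_0\|_{L^\infty}$, Young's inequality with exponents $(2,2)$ to split off $\tfrac12\|\nabla u\|_{L^2}^2$, and finally Young with exponents $(p/(p-2),p/2)$ to linearize $\|u\|_{L^2}^{2(p-2)/p}\|u_0\|_{L^\infty}^{4/p}\le\|u\|_{L^2}^2+\|u_0\|_{L^\infty}^2$. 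The net outcome is
\begin{equation*}
\partial_t\|u\|_{L^2}^2 + \|\nabla u\|_{L^2}^2 \;\le\; \|b_2\|_{L^p}^2\bigl(\|u\|_{L^2}^2+\|u_0\|_{L^\infty}^2\bigr).
\end{equation*}
Dropping $\|\nabla u\|^2_{L^2}$ and applying Gronwall's lemma with $g(t)=\|b_2(t)\|_{L^p}^2$ (so $\int_0^T g = K$) yields the claimed $\|u\|_{C_TL^2}^2\le e^K(\|u_0\|_{L^2}^2+K\|u_0\|_{L^\infty}^2)$. Integrating the differential inequality in time and plugging this bound into the resulting $\int_0^T\|b_2\|_{L^p}^2\|u\|_{L^2}^2\,dt\le K\|u\|_{C_TL^2}^2$ gives the stated $\|\nabla u\|_{L^2_TL^2}^2$-estimate.

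The main obstacle is purely bookkeeping: choosing the interpolation exponents and the splitting in Young's inequality so that $\|\nabla u\|_{L^2}^2$ is absorbed cleanly and the remaining quantity depends only on $\|u\|_{L^2}^2$ and $\|u_0\|_{L^\infty}^2$ (not $\|u\|_{L^\infty}^2$ with a new time integration). The endpoint cases $p=2$ (where $b_2\in L^\infty$ in $x$ would be needed... actually $p=2$ gives $2p/(p-2)=\infty$ so the interpolation collapses to $\|u\|_{L^\infty}\le\|u_0\|_{L^\infty}$ directly) and $p=\infty$ (where $2p/(p-2)=2$ so no interpolation is needed) must be checked separately but are easier than the generic case.
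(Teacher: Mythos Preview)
Your proposal is correct and follows essentially the same approach as the paper: maximum principle for the $L^\infty$ bound, then the energy identity where the $b_1$-term drops out by divergence-freeness, H\"older on $u\cdot b_2\cdot\nabla u$ with exponents $(2p/(p-2),p,2)$, the interpolation $\|u\|_{L^{2p/(p-2)}}\le\|u\|_{L^2}^{(p-2)/p}\|u\|_{L^\infty}^{2/p}$, two applications of Young's inequality, Gronwall, and finally integrating the differential inequality to recover the gradient bound. The paper arrives at the same differential inequality $\partial_t\|u\|_{L^2}^2\le-\|\nabla u\|_{L^2}^2+\|b_2\|_{L^p}^2(\|u_0\|_{L^\infty}^2+\|u\|_{L^2}^2)$ by the identical chain of estimates (it writes the $b_2$-term as $\tfrac12\langle b_2,\nabla u^2\rangle$ before applying H\"older, but this is cosmetic), and handles the endpoints $p=2$ and $p=\infty$ in the same way you do.
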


\begin{proof}
  The first estimate follows directly from the maximum principle, so we focus
  on the second and third bound. The classical solution satisfies $u \in C^{1,
  \infty} ([0, T] \times M)$ with $u (t, \cdummy) \in \mathcal{S}$ for all $t
  \in [0, T]$, because $u$ and its derivatives inherit their decay at infinity
  from $u_0$. Therefore, we are allowed to use integration by parts in the
  following computation:
  \begin{eqnarray*}
    \frac{1}{2} \partial_t \| u (t) \|_{L^2}^2 & = & \int u (t) (\Delta u (t)
    + b (t) \cdummy \nabla u (t))\\
    & = & - \int | \nabla u (t) |^2 + \int b (t) \cdummy \frac{1}{2} \nabla
    (u^2 (t))\\
    & = & - \int | \nabla u (t) |^2 + \frac{1}{2} \int b_2 \cdummy \nabla
    (u^2 (t))\\
    & \leqslant & - \int | \nabla u (t) |^2 + \| b_2 \|_{L^p} \cdot
    \frac{1}{2} \| \nabla (u^2) (t) \|_{L^{\frac{p}{p - 1}}} .
  \end{eqnarray*}
  To proceed, we bound with H{\"o}lder's inequality with $\left( \frac{p}{p -
  1} \right)^{- 1} = \frac{1}{2} + \left( \frac{2 p}{p - 2} \right)^{- 1}$,
  \begin{eqnarray*}
    \frac{1}{2} \| \nabla (u^2) \|_{L^{\frac{p}{p - 1}}} & = & \| u \nabla u
    \|_{L^{\frac{p}{p - 1}}}\\
    & \leqslant & \| u \|_{L^{\frac{2 p}{p - 2}}} \| \nabla u \|_{L^2} .
  \end{eqnarray*}
  For $p = 2$ we get $\| u \|_{L^{\infty}} \| \nabla u \|_{L^2}$ on the right
  hand side. For $p > 2$ we write $\frac{2 p}{p - 2} = 2 + \frac{4}{p - 2}$
  and bound
  \[ \left( \int | u |^{2 + \frac{4}{p - 2}} \right)^{\frac{p - 2}{2 p}}
     \leqslant \left( \| u \|_{L^{\infty}}^{\frac{4}{p - 2}} \int | u |^2
     \right)^{\frac{p - 2}{2 p}} = \| u \|_{L^{\infty}}^{\frac{2}{p}} \| u
     \|_{L^2}^{1 - \frac{2}{p}} . \]
  So, finally Young's inequality for products together with the maximum
  principle yields
  \begin{eqnarray*}
    \frac{1}{2} \partial_t \| u (t) \|_{L^2}^2 & \leqslant & - \| \nabla u (t)
    \|_{L^2}^2 + \| b_2 \|_{L^p} \| u (t) \|_{L^{\infty}}^{\frac{2}{p}} \| u
    (t) \|_{L^2}^{1 - \frac{2}{p}} \| \nabla u (t) \|_{L^2}\\
    & \leqslant & - \frac{1}{2} \| \nabla u (t) \|_{L^2}^2 + \frac{1}{2} \|
    b_2 \|_{L^p}^2 \| u_0 \|_{L^{\infty}}^{\frac{4}{p}} \| u (t) \|_{L^2}^{2 -
    \frac{4}{p}} .
  \end{eqnarray*}
  For $p = 2$ resp. $p = \infty$ only $\| u \|_{L^{\infty}}$ resp. $\| u
  \|_{L^2}$ remain. Otherwise, we apply Young's inequality for products and in
  either case we end up with
  \begin{equation}
    \label{eq:energy-estimate-pr1} \partial_t \| u (t) \|_{L^2}^2 \leqslant -
    \| \nabla u (t) \|_{L^2}^2 + \| b_2 \|_{L^p}^2 (\| u_0 \|_{L^{\infty}}^2 +
    \| u (t) \|_{L^2}^2) .
  \end{equation}
  We first ignore the negative term on the right hand side and apply
  Gronwall's inequality:
  \begin{eqnarray*}
    \| u (t) \|_{L^2}^2 & \leqslant & \left( \| u_0 \|_{L^2}^2 + \| u_0
    \|_{L^{\infty}}^2 \int_0^t \| b_2 (s) \|_{L^p}^2 \mathd s \right)
    e^{\int_0^t \| b_2 (r) \|_{L^p}^2 \mathd r} .
  \end{eqnarray*}
  Taking the supremum in time, we obtain
  \[ \| u \|_{C_T L^2}^2 \leqslant e^K (\| u_0 \|_{L^2}^2 + K \| u_0
     \|_{L^{\infty}}^2) . \]
  Now we plug this back into \eqref{eq:energy-estimate-pr1}:
  \begin{eqnarray*}
    \| \nabla u \|_{L^2_T L^2}^2 & \leqslant & \int_0^T \| b_2 (t) \|_{L^p}^2
    (\| u_0 \|_{L^{\infty}}^2 + \| u (t) \|_{L^2}^2) \mathd t + \| u_0
    \|_{L^2}^2\\
    & \leqslant & K (\| u_0 \|_{L^{\infty}}^2 + e^K (\| u_0 \|_{L^2}^2 + K \|
    u_0 \|_{L^{\infty}}^2)) + \| u_0 \|_{L^2}^2\\
    & = & (1 + K e^K) \| u_0 \|_{L^2}^2 + K (K e^K + 1) \| u_0
    \|_{L^{\infty}}^2 .
  \end{eqnarray*}
  
\end{proof}

\begin{proposition}
  \label{PDEex}Let $b_1$ with $\nabla \cdummy b_1 \equiv 0$ satisfy assumption
  \ref{supercriticalbesovunique} of Theorem \ref{WellposednessBesov}, let $b_2
  \in L^{\infty}_T L^r (M)$ for some $r \in (d, \infty]$ or $r = d$ if $d
  \geqslant 3$, and let $u_0 \in \mathcal{S}$. Then, with $b = b_1 + b_2$,
  there exists $u \in C (\mathbb{R}_+ ; L^2)$ with $u (0) = u_0$, such that
  for all $T > 0$ we have $u \in L^2_T H^1 \cap L^{\infty}_T L^{\infty}$,
  $\partial_t u \in L^2_T H^{- 1}$ exists and $u$ is a weak solution in the
  distributional sense in time and space to
  \[ \partial_t u = \mathLaplace u + b \cdummy \nabla u, \]
  meaning
  \[ \langle \partial_t u, f \rangle = \langle \mathLaplace u + b \cdummy
     \nabla u, f \rangle, \qquad f \in C^{\infty}_c (\mathbb{R}_+ \times M),
  \]
  where we define
  \[ b \cdummy \nabla u \assign \nabla \cdummy (A \cdummy \nabla u) + b_2
     \cdummy \nabla u \]
  for $b_1 = b_1 (A)$ with $A \in L^{\infty}_{\tmop{loc}} L^{\infty} (M)$, and
  $b_2 \cdummy \nabla u \in L^2_{\tmop{loc}} L^{\frac{2 r}{2 + r}}$ is defined
  as the usual product, and
  \[ b \cdummy \nabla u \assign \nabla \cdummy (b u) + b_2 \cdummy \nabla u
     \in L^2_T H^{- 1}, \]
  for $b_1 \in L^2_{\tmop{loc}} L^{\infty}$
\end{proposition}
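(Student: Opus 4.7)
The strategy is a standard vanishing viscosity / Galerkin-style approximation: replace $b$ by its mollifications $b^n = b \ast \rho^n$, solve the smooth PDE $\partial_t u^n = \Delta u^n + b^n \cdot \nabla u^n$ with $u^n(0) = u_0 \in \mathcal{S}$ by classical theory, and pass to the limit using the uniform a priori estimates from Lemma~\ref{aprioriPDE}. The key input is that those estimates are \emph{uniform in the divergence-free part} $b_1^n$: they only depend on $\| b_2^n \|_{L^2_T L^p}^2 \lesssim T \| b_2 \|_{L^\infty_T L^r}^2$ (choose $p = r$), which is uniformly bounded in $n$ because $b_2 \in L^\infty_T L^r$. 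Hence the sequence $(u^n)$ is bounded in $L^\infty_T L^\infty \cap C_T L^2 \cap L^2_T H^1$, uniformly in $n$.

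From these bounds I extract subsequential limits: $u^n \rightharpoonup u$ weakly in $L^2_T H^1$, $u^n \stackrel{\ast}{\rightharpoonup} u$ in $L^\infty_T L^\infty$, and a diagonal argument with Aubin--Lions yields strong convergence $u^n \to u$ in $L^2_T L^2_{\mathrm{loc}}$ (on $\mathbb{T}^d$ globally, on $\mathbb{R}^d$ locally). To invoke Aubin--Lions I first need $\partial_t u^n$ uniformly bounded in some $L^2_T H^{-s}_{\mathrm{loc}}$: this comes from $\partial_t u^n = \Delta u^n + b^n \cdot \nabla u^n$ by bounding $\Delta u^n$ in $L^2_T H^{-1}$ and the drift in $L^2_T H^{-s}_{\mathrm{loc}}$ via H\"older (using $A^n$ bounded in $L^\infty_{\mathrm{loc}}$ resp. $b_1^n$ bounded in $L^2_{\mathrm{loc}} L^2$, and $b_2^n$ bounded in $L^\infty_T L^r$).

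Now I pass to the limit in the weak formulation $\int u^n \partial_t f + \int u^n \Delta f + \int \langle b^n \cdot \nabla u^n, f \rangle = 0$ against $f \in C^\infty_c$. The linear terms pass trivially by weak convergence. For the drift, I distinguish cases. If $b_1 = b(A)$ with $A \in L^\infty_{\mathrm{loc}} L^\infty$, I rewrite $b_1^n \cdot \nabla u^n = \nabla \cdot (A^n \cdot \nabla u^n)$ and test against $f$: using that $A^n \to A$ strongly in $L^2_{\mathrm{loc}}$ with uniform $L^\infty_{\mathrm{loc}}$ bound and that $\nabla u^n \rightharpoonup \nabla u$ in $L^2_T L^2$, the product $A^n \nabla f$ converges strongly in $L^2_T L^2_{\mathrm{loc}}$ by dominated convergence, which combined with weak convergence of $\nabla u^n$ identifies the limit as $\int A \nabla u \cdot \nabla f$. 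If instead $b_1 \in L^2_{\mathrm{loc}} L^2$, I write $b_1^n \cdot \nabla u^n = \nabla \cdot (b_1^n u^n)$ and use that $b_1^n \to b_1$ strongly in $L^2_T L^2_{\mathrm{loc}}$ together with the $L^\infty$ bound on $u^n$ plus its strong $L^2_T L^2_{\mathrm{loc}}$ convergence. For $b_2^n \cdot \nabla u^n$, the strong $L^2_T L^r_{\mathrm{loc}}$ convergence of $b_2^n$ combined with weak $L^2$ convergence of $\nabla u^n$ (and uniform $L^\infty$-boundedness of $u^n$ when one instead writes $\nabla \cdot (b_2^n u^n) - (\nabla \cdot b_2^n) u^n$ if one prefers to avoid differentiating $u$) gives the required convergence.

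Finally, $\partial_t u \in L^2_T H^{-1}$ is recovered directly from the equation: $\Delta u \in L^2_T H^{-1}$ and, by the same H\"older-type bounds used for tightness, $b \cdot \nabla u \in L^2_T H^{-1}$ as well (this is exactly of the type~\eqref{easyestimate}). Continuity in time $u \in C(\mathbb{R}_+; L^2)$ then follows from the standard fact that $u \in L^2_T H^1$ with $\partial_t u \in L^2_T H^{-1}$ embeds into $C_T L^2$. The main obstacle is to ensure that in the limit $n \to \infty$ the products $A^n \nabla u^n$ and $b_2^n \nabla u^n$ really converge to the right limit despite the low regularity of the coefficients; this is overcome by the fact that one factor converges \emph{strongly} in $L^2_{\mathrm{loc}}$ (the mollifications of $A$, $b_2$ against a fixed test function), while the other is merely weakly convergent but uniformly bounded in $L^2$.
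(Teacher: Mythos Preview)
Your approach is correct and follows the same high-level scheme as the paper (mollify, get uniform bounds from Lemma~\ref{aprioriPDE}, extract weak limits, pass to the limit in the weak formulation), but you take a slightly different route in the limiting step. You invoke Aubin--Lions to upgrade the weak $L^2_T H^1$ convergence of $u^n$ to strong $L^2_T L^2_{\mathrm{loc}}$ convergence, and then use this strong convergence to handle the products $b_1^n u^n$ and $A^n \nabla u^n$ against test functions. The paper avoids Aubin--Lions entirely: it moves all derivatives onto the test function $f$ via integration by parts, writing e.g.\ $\langle b_1^n \cdot \nabla u^n, f\rangle = -\langle u^n, b_1^n \cdot \nabla f\rangle$, and then passes to the limit using only the weak $L^2_T H^1$ convergence of $u^n$ together with the \emph{strong} $L^2_T H^{-1}$ convergence $b_1^n \cdot \nabla f \to b_1 \cdot \nabla f$ already established in the proofs of Theorems~\ref{tightnesstorusBesov} and~\ref{tightnesstorusBesovEuclid}. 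This is a bit cleaner since it recycles prior work and avoids the localisation needed for Aubin--Lions on $\mathbb{R}^d$; on the other hand, your argument is more self-contained. Both give the same $\partial_t u \in L^2_T H^{-1}$ (you bound the drift directly; the paper extracts a further weak limit for $\partial_t u^n$), and both recover $u \in C_T L^2$ from the same Lions--Magenes embedding.
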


\begin{proof}
  We consider the sequence of classical solutions $(u^n)_{n \in \mathbb{N}}$
  to
  \[ \partial_t u^n = \mathLaplace u^n + b^n \cdummy \nabla u^n, \qquad u^n
     (0) = u_0 . \]
  By the Banach-Alaoglu theorem for separable Banach spaces and by Lemma
  \ref{aprioriPDE} there exists a weakly converging subsequence in $L^2_T H^1$
  for any $T > 0$, and by a diagonal sequence argument we can extract a
  subsequence such that $(u^{n_k} |_{[0, T]})_k$ converges weakly in $L^2_T
  H^1$ for any $T > 0$. Furthermore,
  \[ \int_0^T \| \partial_t u^n (t) \|_{H^{- 1}}^2 \mathd t \lesssim \int_0^T
     \| \mathLaplace u^n (t) \|_{H^{- 1}}^2 \mathd t + \int_0^T \| b^n (t)
     \cdummy \nabla u^n (t) \|^2_{H^{- 1}} \mathd t \lesssim \| u^n \|_{L^2_T
     H^1 \cap L^{\infty}_T L^{\infty}}^2, \]
  by Lemma \ref{Laplacebound} and the fact that
  \[ \| \nabla \cdummy (A^n \cdummy \nabla u^n) \|^2_{H^{- 1}} \lesssim \| A^n
     \|^2_{L^{\infty}_T L^{\infty}} \| \nabla u^n \|_{L^2}^2, \qquad \| \nabla
     \cdummy (b_1^n u^n) \|^2_{H^{- 1}} \lesssim \| b_1^n \|_{L^2}^2 \| u^n
     \|^2_{L^{\infty}_T L^{\infty}}, \]
  respectively.
  
  Hence, we can choose another subsequence such that $(\partial_t u^n |_{[0,
  T]})$ converges weakly in $L^2_T H^{- 1}$ to some $v$ and of course
  $\partial_t u = v$ in the sense of distributions in time and space. We do
  not denote the subsequence explicitly. Applying the embedding
  \[ L^2_T H^1 \cap H^1_T H^{- 1} \hookrightarrow C_T L^2, \]
  from Theorem 3 in {\textsection} 5.9.2 of Part II in {\cite{Evans98}} (which
  translates to the periodic case as well), we get $u|_{[0, T]} \in C_T L^2$.
  To see that $u \in L^{\infty} L^{\infty}$ we write for $f \in C^{\infty}_c
  (\mathbb{R}_+ \times M)$, testing in time and space,
  \[ | \langle u, f \rangle | = \lim_n | \langle u^n, f \rangle | \leqslant
     \sup_n \| u^n \|_{L^{\infty}_T L^{\infty}} \| f \|_{L^1 (\mathbb{R}_+
     \times M)}, \]
  implying that $u \in (L^1 (\mathbb{R}_+ \times M))^{\ast} = L^{\infty}
  (\mathbb{R}_+ \times M)$. For $f \in C^{\infty}_c (\mathbb{R}_+ \times M)$
  we have
  \begin{eqnarray*}
    \langle \partial_t u, f \rangle & = & \lim_n \langle \partial_t u^n, f
    \rangle\\
    & = &\lim_n \langle \mathLaplace u^n + b^n \cdummy \nabla u^n, f \rangle\\
    & = &\langle \mathLaplace u, f \rangle - \lim_n \{ \langle u^n, b_1^n
    \cdummy \nabla f - b_1 \cdummy \nabla f \rangle + \langle u^n - u, b_1
    \cdummy \nabla f \rangle + \langle u, b_1 \cdummy \nabla f \rangle \}\\
    & &- \lim_n \{ \langle u^n, \nabla \cdummy (b_2^n f) - \nabla \cdummy (b_2
    f) \rangle + \langle u^n - u, \nabla \cdummy (b_2 f) \rangle + \langle u,
    \nabla \cdummy (b_2 f) \rangle \} .
  \end{eqnarray*}
  We know from the proof of Theorems \ref{tightnesstorusBesov} and
  \ref{tightnesstorusBesovEuclid} that $b_1^n \cdummy \nabla f \rightarrow b_1
  \cdummy \nabla f$ strongly in $L^2_T H^{- 1}$ (note that (\ref{convcompinf})
  holds and vanishes in the limit also for $p = \infty$, even if this case was
  not considered in the proof), implying
  \[ | \langle u^n, b_1^n \cdummy \nabla f - b_1 \cdummy \nabla f \rangle |
     \leqslant \| u^n \|_{L^2_T H^1} \| b_1^n \cdummy \nabla f - b_1 \cdummy
     \nabla f \|_{L^2_T H^{- 1}} \rightarrow 0 . \]
  Furthermore, by weak convergence in $L^2_T H^1$, we have
  \[ \lim_n \langle u^n - u, b_1 \cdummy \nabla f \rangle = 0 . \]
  To see convergence for the $b_2$-term, we bound
  \[ | \langle u^n, \nabla \cdummy (b_2^n f) - \nabla \cdummy (b_2 f) \rangle
     | \lesssim \| u^n \|_{L^2_T H^1} \| b_2^n f - b_2 f \|_{L^2_T L^2}
     \lesssim \| (b_2^n - b_2) 1_{\tmop{supp} f} \|_{L^2_T L^2} \| f
     \|_{L^{\infty}_T L ^{\infty}}, \]
  which converges to zero. Furthermore, by the same bound, we have $\nabla
  \cdummy (b_2 f) \in L^2_T H^{- 1}$ so that
  \[ \langle u^n - u, \nabla \cdummy (b_2 f) \rangle \rightarrow 0 . \]
  We obtain
  \[ \langle \partial_t u, f \rangle = \langle \mathLaplace u + b \cdummy
     \nabla u, f \rangle . \]
\end{proof}

We are now ready to prove the uniqueness part of
Theorem~\ref{WellposednessBesov}.

\begin{proof}[Proof of Theorem \ref{WellposednessBesov}.]
  Existence of an energy solution $X$ on $M$ follows from
  Theorems~\ref{tightnesstorusBesov} and \ref{tightnesstorusBesovEuclid},
  respectively, and it remains to show uniqueness. Let $T > 0$, and let $u$
  with $u (T) = u_T \in \mathcal{S}$ be a weak solution to
  \[ \partial_t u + \mathLaplace u + b \cdummy \nabla u = 0, \qquad \text{on }
     [0, T] \times M, \]
  as in Proposition \ref{PDEex} (modulo time reversal). In particular, $u \in
  C_T L^2 \cap L^2_T H^1 \cap L^{\infty}_T L^{\infty}$, and $\partial_t u \in
  L^2_T H^{- 1}$. We extend $u$ to all times by setting $u (t) = u (0)$ for $t
  \leqslant 0$ and $u (t) = u (T)$ for $t \geqslant T$. Then we consider the
  convolution $\rho_{\kappa} \ast u \in C^{\infty} (\mathbb{R}_+ ; \mathcal{S}
  (M))$, where $\rho_{\kappa}$ is a positive mollifier in time and space. It
  easily follows from an approximation with compactly supported smooth
  functions that
  \[ M^{\rho_{\kappa} \ast u}_t = \rho_{\kappa} \ast u (t, X_t) -
     \rho_{\kappa} \ast u (0, X_0) - \int_0^t (\partial_t + \mathLaplace + b
     \cdummy \nabla) \rho_{\kappa} \ast u (r, X_r) \mathd r, \qquad t \in [0,
     T], \]
  is a martingale in the filtration generated by $X$, with quadratic variation
  \[ \langle M^{\rho_{\kappa} \ast u} \rangle_t = \int_0^t | \nabla
     \rho_{\kappa} \ast u (s, X_s) |^2 \mathd s. \]
  By the energy estimate we have
  \[ \mathbb{P} \left( \sup_{t \leqslant T} \left| \int_0^t | \nabla
     \rho_{\kappa} \ast u (s, X_s) |^2 \mathd s - \int_0^t | \nabla u (s, X_s)
     |^2 \mathd s \right| > \delta \right) \leqslant \varepsilon +
     \frac{M}{\delta} \| | \nabla \rho_{\kappa} \ast u |^2 - | \nabla u |^2
     \|_{L^1_T L^1}, \]
  and since $u \in L^2_T H^1$, we obtain that $\langle M^{\rho_{\kappa} \ast
  u} \rangle$ converges in ucp to $\int_0^{\nocomma \cdummy} | \nabla u (s,
  X_s) |^2 \mathd s$. In particular, $M^{\rho_{\kappa} \ast u}$ converges in
  distribution to a continuous local martingale $M^u$ by Theorem~VI.4.13 and
  Proposition~IX.1.17 in {\cite{Jacod2003}}. But by incompressibility and
  since $u \in C (\mathbb{R}_+ ; L^2)$ we obtain that $\rho_{\kappa} \ast u
  (t, X_t) - \rho_{\kappa} \ast u (0, X_0)$ converges in probability to $u (t,
  X_t) - u (0, X_0)$. Convergence in probability of the integral term to 0
  follows from the energy estimate if we can show that
  \begin{equation}
    (\partial_t + \mathLaplace + b \cdummy \nabla) (\rho_{\kappa} \ast u)
    \rightarrow (\partial_t + \mathLaplace + b \cdummy \nabla) u = 0, \qquad
    \tmop{in} L^2_T H^{- 1} . \label{convergenceinL2Hminus}
  \end{equation}
  The convergence $(\partial_t + \mathLaplace) \rho_{\kappa} \ast u
  \rightarrow (\partial_t + \mathLaplace) u$ is clear since $\partial_t u \in
  L^2_T H^{- 1}$ and $u \in L^2_T H^1$. Now we treat the drift term. Recall
  the definition of $b \cdummy \nabla u$ from e.g. Proposition \ref{PDEex}.
  For $b_1 \in L^2_T L^2$ we bound
  \begin{eqnarray*}
    \| b_1 \cdummy \nabla (\rho_{\kappa} \ast u - u) \|_{L^2_T H^{- 1}} & = \|
    \nabla \cdummy (b_1 \cdummy (\rho_{\kappa} \ast u - u)) \|_{L^2_T H^{-
    1}}\\
    & \lesssim \| b_1 \cdummy (\rho_{\kappa} \ast u - u) \|_{L^2 ([0, T]
    \times M)},
  \end{eqnarray*}
  which vanishes for $\kappa \rightarrow 0$ by the dominated convergence
  theorem, as $u \in L^{\infty} ([0, T] \times M)$. For $A \in L^{\infty}_T
  L^{\infty}$ we estimate
  \[ \| \nabla \cdummy (A \cdummy \nabla (\rho_{\kappa} \ast u - u)) \|_{L^2_T
     H^{- 1}} \lesssim \| A \|_{L_T^{\infty} L^{\infty}} \| \rho_{\kappa} \ast
     u - u \|_{L^2_T H^1} \rightarrow 0. \]
  By Lemma~\ref{Laplacebound}, the $b_2$-term is controlled by
  \[ \| b_2 \cdummy \nabla (\rho_{\kappa} \ast u - u) \|_{L^2_T H^{- 1}}
     \lesssim \| b_2 \|_{L^{\infty}_T L^r} \| \rho_{\kappa} \ast u - u
     \|_{L^2_T H^1} \rightarrow 0. \]
  Hence, we have (\ref{convergenceinL2Hminus}), and therefore $M^u_t = u (t,
  X_t) - u (0, X_0)$, $t \in [0, T]$, is a (continuous) local martingale.
  Since $u \in L^{\infty}_T L^{\infty}$ and since $\tmop{law} (X_t) \ll
  \tmop{Leb}$ by the incompressibility estimate, this local martingale is
  bounded and thus a true martingale. We obtain
  \[ \mathbb{E} [u_T (X_T)] =\mathbb{E} [u (0, X_0)], \]
  and this yields uniqueness of the one-dimensional distributions of $X$. The
  uniqueness of the finite-dimensional distributions and the Markov property
  then follow iteratively by a standard argument, see
  {\cite{GubinelliPerkowski20}}, Theorem~4.8 or {\cite{Ethier1986}},
  Theorem~4.4.2.
\end{proof}

\subsection{Uniqueness under the assumptions of Theorem
\ref{Wellposednesslocaldiverging}}\label{assumptionsonAcase}

In the following, $b$ is independent of time. Recall that
\[ b^n = b \ast \rho^n \]
for a mollifier $\rho^n$, so that $b^n \in C^{\infty}_b (M)$. Therefore, the
operator $\mathcal{L}^n \in L (H^1, H^{- 1})$ with $\mathcal{L}^n =
\mathLaplace + b^n \cdummy \nabla$ is well-defined.

We assume throughout this section that $b = b_1 + b_2$ with $b_i \in
\mathcal{S}' (M)$ such that $b_1 = b_1 (A)$, where $A \in L^p$ for $p \in [2,
\infty]$ and such that $b_2 \in L^r (M)$ for some $r \geqslant d$. In that
case we define the continuous operator
\[ \mathcal{L}: H^1 \rightarrow \mathcal{S}' (M), \qquad \mathcal{L}u \assign
   \mathLaplace u + \nabla \cdummy (A \cdummy \nabla u) + b_2 \nocomma \cdummy
   \nabla u. \]
One can show that $\mathcal{L}$ maps $H^1$ boundedly to some Besov space. But
for general $u \in H^1$ we do not know if $\mathcal{L}u \in L^2$. This
situation is similar to the infinite-dimensional setting in
{\cite{GubinelliPerkowski20}}, where a domain for the generator has been
constructed using a controlled ansatz. In the next proposition we identify a
domain $\mathcal{D}$ which is indeed mapped to $L^2$. Later, we address the
uniqueness of the domain under additional conditions.

We are unsure if parts of Propositions \ref{existencesemigroup} and
\ref{uniquenessabstract} might follow from general functional analytic theory,
like sesquilinear forms on Hilbert spaces. At least in the standard literature
(cf {\cite{SimonReed80}}, {\cite{Kato80}}) ``sectoriality'' of the associated
form is assumed implying in our case that $\langle u, b \cdummy \nabla u
\rangle$ for complex-valued $u$ can be controlled by $\| u \|_{H^1}^2$, which
is not the case in our setting, exactly due to supercriticality. In some
sense, the proof of proposition \ref{existencesemigroup} is the
operator-theoretic version of the proof of proposition \ref{PDEex}, where now
we solve the resolvent equation for $\mathcal{L}$ instead of the Cauchy
problem, and we do this in way which is ``uniform in the data'', via a
diagonal sequence argument, so that we obtain a domain for $\mathcal{L}$.

\begin{proposition}
  \label{existencesemigroup}Let $b_i \in \mathcal{S}' (M)$, $i \in \{ 1, 2 \}$
  with $b_1 = b_1 (A)$, where $A \in L^p$ for $p \in [2, \infty]$ and $b_2 \in
  L^r (M)$ for $r \in (d, \infty]$ or, if $d \geqslant 3$, $b_2 \in L^d (M)$
  and $\| b \|_{L^d (M)} \leqslant K_d$ for a certain $K_d > 0$ depending only
  on $d$. Let $b = b_1 + b_2$. Then there is a $\lambda_0 > 0$ such that the
  resolvent $(\lambda - \mathcal{L}^n)^{- 1} \in L (H^{- 1}, H^1)$ exists for
  all $\lambda > \lambda_0$ and for all $n \in \mathbb{N}$, and
  \begin{equation}
    \sup_n \| (\lambda - \mathcal{L}^n)^{- 1} \|_{L (H^{- 1}, H^1)} < \infty .
    \label{compactargument}
  \end{equation}
  Moreover, for any subsequence $(n_k)_{k \in \mathbb{N}} \subset \mathbb{N}$
  and $\lambda > \lambda_0$ there exists a further subsequence $(n_{k_l})_{l
  \in \mathbb{N}}$ and a strongly continuous $L^2$-semigroup $P$ with the
  following properties
  \begin{enumerateroman}
    \item The generator $(\mathcal{D}, \mathcal{L})$ of $P$ has domain
    $\mathcal{D} \subset H^1 (M)$ and is given by $\mathcal{L} u \assign
    \mathLaplace u + \nabla \cdummy (A \cdummy \nabla u) + b_2 \nocomma
    \cdummy \nabla u$, for $u \in \mathcal{D}$.
    
    \item $(\lambda -\mathcal{L})^{- 1} \in L (L^2 (M))$ exists, extends to an
    element of $L (H^{- 1}, H^1)$ and $\left( \lambda - \mathcal{L}^{n_{k_l}}
    \right)^{- 1} u^{\sharp} \rightarrow (\lambda -\mathcal{L})^{- 1}
    u^{\sharp}$ weakly in $H^1 (M)$ for any $u^{\sharp} \in H^{- 1} (M)$.
  \end{enumerateroman}
\end{proposition}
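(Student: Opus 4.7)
The plan is to proceed in four linked steps: (a) a uniform $H^1$--$H^{-1}$ energy estimate for the resolvent equation, (b) existence of $(\lambda - \mathcal{L}^n)^{-1}$ via Lax-Milgram for each fixed $n$, (c) weak extraction of a limit $R_\lambda$ along a diagonal subsequence, and (d) identification of $R_\lambda$ as the resolvent of the generator of a $C_0$-semigroup on $L^2$. The key structural observation used throughout is that the mollified matrix $A^n = A \ast \rho^n$ remains pointwise antisymmetric, so integration by parts gives
\[
\langle \nabla \cdot (A^n \cdot \nabla u), u \rangle = - \int (A^n \nabla u) \cdot \nabla u = 0.
\]

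For step (a), testing $(\lambda - \mathcal{L}^n) u = f$ in the $H^1$--$H^{-1}$ duality reduces to
\[
\lambda \| u \|_{L^2}^2 + \| \nabla u \|_{L^2}^2 = \langle f, u \rangle + \langle b_2^n \cdot \nabla u, u \rangle,
\]
and Lemma~\ref{Laplacebound} absorbs the last term: for $b_2 \in L^r$ with $r > d$ I would choose $\lambda_0$ large enough that the prefactor from Lemma~\ref{Laplacebound} is smaller than $(2\|b_2\|_{L^r})^{-1}$, while for $r = d$ with $d \geq 3$ the smallness assumption $\|b\|_{L^d} \leq K_d$ plays the same role with $\lambda_0 = 1$. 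This delivers $\|u\|_{H^1} \lesssim \|f\|_{H^{-1}}$ together with the sharper decay $\|u\|_{L^2} \lesssim \|f\|_{H^{-1}} / \sqrt{\lambda}$, uniformly in $n$. Step (b) is then standard: since $b^n$ is smooth and bounded, the bilinear form $B^n(u, \phi) = \lambda \langle u, \phi \rangle + \langle \nabla u, \nabla \phi \rangle - \langle b^n \cdot \nabla u, \phi \rangle$ is bounded and, by (a), coercive on $H^1$, so Lax-Milgram produces $(\lambda - \mathcal{L}^n)^{-1} \in L(H^{-1}, H^1)$ with the claimed uniform bound~\eqref{compactargument}.

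For step (c), I would fix a countable dense subset $\{f_j\} \subset H^{-1}$ and extract by diagonalization a subsequence $(n_{k_l})$ along which $u^{n_{k_l}}_j := (\lambda - \mathcal{L}^{n_{k_l}})^{-1} f_j \rightharpoonup R_\lambda f_j$ weakly in $H^1$ for every $j$; by the uniform bound $R_\lambda$ extends to an element of $L(H^{-1}, H^1)$. Passing to the limit in the weak formulation against $\phi \in C^\infty_c$ identifies $(\lambda - \mathcal{L}) R_\lambda f = f$ via exactly the weak-strong arguments used in the proof of Proposition~\ref{PDEex} (for instance $\langle A^{n_{k_l}} \nabla u^{n_{k_l}}_j, \nabla \phi \rangle \to \langle A \nabla u_j, \nabla \phi \rangle$ since $A^{n_{k_l}} \nabla \phi \to A \nabla \phi$ strongly in $L^2_{\mathrm{loc}}$ against weak convergence of $\nabla u^{n_{k_l}}_j$ in $L^2$). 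For step (d), the $n$-level resolvent identity passes to the limit by uniform boundedness together with strong $L^2$-convergence on the dense set, so $\{R_\lambda\}_{\lambda > \lambda_0}$ is a pseudo-resolvent on $L^2$ with $\|\lambda R_\lambda\|_{L(L^2)} \lesssim 1$. To invoke Hille-Yosida I would verify range density via $\lambda R_\lambda g \to g$ in $L^2$ for smooth $g$ by writing $\lambda R_\lambda g - g = R_\lambda \mathcal{L} g$ and using the decay $\|R_\lambda\|_{H^{-1} \to L^2} \lesssim 1/\sqrt{\lambda}$ from step (a), whence $\|\lambda R_\lambda g - g\|_{L^2} \lesssim \|\mathcal{L}g\|_{H^{-1}} / \sqrt{\lambda} \to 0$. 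The semigroup $P$ with generator $\mathcal{L}$ on $\mathcal{D} := R_\lambda(L^2) \subset H^1$ then exists, and by construction $\mathcal{L}$ acts as $\Delta + \nabla \cdot (A \cdot \nabla) + b_2 \cdot \nabla$ on $\mathcal{D}$, delivering both (i) and (ii).

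The main obstacle I anticipate is step (d): turning a mere uniform resolvent bound into an actual $C_0$-semigroup requires range density of the pseudo-resolvent, and this is precisely where the sharper $1/\sqrt{\lambda}$ decay of $\|R_\lambda\|_{H^{-1} \to L^2}$ (rather than just $L^2$-boundedness) becomes indispensable. Note that uniqueness of the limiting semigroup is not claimed in the proposition, which is consistent with the fact that different diagonal subsequences could a priori yield different pseudo-resolvents; that uniqueness question is exactly what is addressed in the subsequent results identifying $\mathcal{D}$ with the maximal domain~\eqref{maxdomaindef}.
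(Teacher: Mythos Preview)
Your steps (a)--(c) are correct and essentially match the paper (the paper phrases (b) as density + injectivity + closed range rather than Lax--Milgram, but these are equivalent here). The gap is in step (d). To run Hille--Yosida you need a pseudo-resolvent family $\{R_\lambda\}_{\lambda > \lambda_0}$, and you justify the resolvent identity by claiming ``strong $L^2$-convergence on the dense set''. But the diagonal extraction in (c) only gives \emph{weak} $H^1$-convergence of $(\lambda - \mathcal{L}^{n_{k_l}})^{-1} f_j$. On $M = \mathbb{T}^d$ this upgrades to strong $L^2$ via Rellich--Kondrachov and your argument goes through; on $M = \mathbb{R}^d$ the embedding $H^1 \hookrightarrow L^2$ is not compact, so you cannot pass the product $R_\lambda^{n} R_\mu^{n} f$ to the limit---weak convergence of the inner factor together with a \emph{varying} sequence of uniformly bounded operators does not yield weak convergence of the composition. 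Without the resolvent identity you are left with a single $R_\lambda$, which is not enough for Hille--Yosida. (Your range-density argument $\lambda R_\lambda g - g = R_\lambda \mathcal{L} g$ is actually fine, since $\mathcal{L}^{n_k} g \to \mathcal{L} g$ strongly in $H^{-1}$ lets you pass the $n$-level identity to the limit with only weak convergence of $R_\lambda^{n_k}$; but this alone does not rescue step (d).)

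The paper sidesteps this by using Lumer--Phillips in place of Hille--Yosida. Having constructed a single $\mathcal{K} = R_\lambda$ and set $\mathcal{D} = \mathcal{K} L^2$, it verifies dissipativity $\langle u, (\mathcal{L} - a) u \rangle \leq 0$ for $u \in \mathcal{D}$ and suitable $a < \lambda$: one writes $\langle u, (\mathcal{L} - a) u \rangle = -\langle u, u^\sharp \rangle + (\lambda - a)\|u\|_{L^2}^2$, passes $\langle u^{n_k}, u^\sharp \rangle = \|(\lambda-\Delta)^{1/2} u^{n_k}\|_{L^2}^2 - \langle u^{n_k}, b_2^{n_k}\cdot\nabla u^{n_k}\rangle$ to the limit via weak convergence, and controls $(\lambda - a)\|u\|_{L^2}^2$ by $\lambda^{-1}(\lambda - a)\liminf_k \|(\lambda-\Delta)^{1/2} u^{n_k}\|_{L^2}^2$ using weak lower semicontinuity of the norm. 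This needs only weak $H^1$-convergence and works identically on $\mathbb{T}^d$ and $\mathbb{R}^d$. Since Lumer--Phillips (on reflexive spaces) requires surjectivity of $\lambda - \mathcal{L}$ for only one $\lambda$---already established in your step (c)---no resolvent identity is needed.
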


\begin{proof}
  We first show that for large enough $\lambda > 0$ the image $(\lambda -
  \mathcal{L}^n) H^1 \subset H^{- 1}$ is dense in $H^{- 1}$ for all $n \in
  \mathbb{N}$. Note that for this claim we may equip $H^{- 1}$ with the
  equivalent norm $\| (\lambda - \Delta)^{- 1 / 2} \cdummy \|_{L^2}$. Let then
  $v \in H^{- 1}$ be such that $\langle (\lambda - \Delta)^{- 1} (\lambda -
  \mathcal{L}^n) u, v \rangle_{L^2} = 0$ for all $u \in H^1$. Setting $w = u =
  (\lambda - \mathLaplace)^{- 1} v \in H^1$ and using the fact that $\nabla
  \cdummy b_1^n = 0$, we obtain
  \[ \| (\lambda - \mathLaplace)^{1 / 2} w \|_{L^2}^2 - \langle b^n_2 \cdummy
     \nabla w, w \rangle_{L^2} = \langle (\lambda - \mathLaplace) w, w
     \rangle_{L^2} - \langle b^n \cdummy \nabla w, w \rangle_{L^2} = 0 . \]
  Next, we estimate
  \[ | \langle b^n_2 \cdummy \nabla w, w \rangle_{L^2} | = \langle (\lambda -
     \mathLaplace)^{- 1 / 2} b^n_2 \cdummy \nabla w, (\lambda -
     \mathLaplace)^{1 / 2} w \rangle_{L^2} \leqslant C \| (\lambda -
     \mathLaplace)^{1 / 2} w \|_{L^2}^2, \]
  where by Lemma \ref{Laplacebound} we can take $C < 1$ if $\lambda > 0$ is
  big enough or $\| b_2 \|_{L^d}$ is small enough, and in that case we get $w
  = 0$ and thus $v = 0$. Therefore, $(\lambda - \mathcal{L}^n) H^1$ is dense
  in $H^{- 1}$.
  
  Let now $u^{\sharp} \in (\lambda - \mathcal{L}^n) H^1 \subset H^{- 1}$.
  Testing the equation $(\lambda - \mathcal{L}^n) u = u^{\sharp}$ in $L^2$
  against $u \in H^1$, we obtain by a similar argument as above
  \[ (1 - C) \| (\lambda - \mathLaplace)^{1 / 2} u \|^2_{L^2} \leqslant \|
     (\lambda - \mathLaplace)^{1 / 2} u \|_{L^2} \| (\lambda -
     \mathLaplace)^{- 1 / 2} u^{\sharp} \|_{L^2}, \]
  i.e.
  \begin{equation}
    \| u \|_{H^1} \lesssim \| u^{\sharp} \|_{H^{- 1}},
    \label{uniforminvertbound}
  \end{equation}
  This shows that $(\lambda - \mathcal{L}^n)$ is injective on $H^1$, and that
  its image is closed.
  
  Since we already saw that the image of $(\lambda - \mathcal{L}^n)$ is dense,
  it must be equal to $H^{- 1}$ and thus $(\lambda - \mathcal{L}^n)^{- 1} \in
  L (H^1, H^{- 1})$ exists with uniformly bounded operator norm. Let now
  $(e_{\ell})_{\ell \in \mathbb{N}}$ be a complete orthonormal system in $H^{-
  1}$. By the Banach-Alaoglu theorem and (\ref{compactargument}) we get a
  subsequence $n_{k_1}$ such that $(\lambda - \mathcal{L}^{n_k})^{- 1} e_1$
  converges weakly in $H^1$. By a diagonal sequence argument we then find a
  subsequence $(n_k)_{k \in \mathbb{N}}$ such that
  \begin{equation}
    \mathcal{K}e_{\ell} \assign \lim_k  (\lambda - \mathcal{L}^{n_k})^{- 1}
    e_{\ell}, \qquad \| \mathcal{K}e_{\ell} \|_{H^1} \lesssim \| e_{\ell}
    \|_{H^{- 1}} \label{preboundK}
  \end{equation}
  exists weakly in $H^1$ for all $\ell$. Using linearity and the bound in
  (\ref{preboundK}), we get a unique extension of $\mathcal{K}$ to an operator
  $\mathcal{K} \in L (H^{- 1}, H^1)$. By an $\varepsilon / 3$ argument we can
  verify that $(\lambda - \mathcal{L}^{n_k})^{- 1} u^{\sharp} \rightarrow
  \mathcal{K} u^{\sharp} \backassign u$ weakly in $H^1$ for each $u^{\sharp}
  \in H^{- 1}$. Furthermore, we can pass to the limit in the equation
  \[ \lambda u^{n_k} = \mathLaplace u^{n_k} + b^{n_k} \cdummy \nabla u^{n_k} +
     u^{\sharp}, \]
  where $u^{n_k} = (\lambda - \mathcal{L}^{n_k})^{- 1} u^{\sharp}$, in the
  sense of distributions. Indeed, we observe that $\mathcal{G} u = \nabla
  \cdummy (A \cdummy \nabla u) + b_2  \cdummy \nabla u$ is a bounded operator
  from $H^1$ to some space $B^s_{p, q}$ and hence for $f \in C^{\infty}_c (M)$
  and with $\mathcal{G}_{n_k} = b^{n_k} \cdummy \nabla u^{n_k}$,
  \begin{eqnarray*}
    \langle \mathcal{G} u - \mathcal{G}_{n_k} u^{n_k}, f \rangle & = \langle
    \mathcal{G} (u - u^{n_k}), f \rangle + \langle (\mathcal{G} -
    \mathcal{G}_{n_k}) u^{n_k}, f \rangle,
  \end{eqnarray*}
  converges to $0$, since $\langle \mathcal{G} \cdummy, f \rangle$ is a weakly
  continuous linear functional on $H^1$ and
  \begin{eqnarray*}
    | \langle (\mathcal{G} - \mathcal{G}_{n_k}) u^{n_k}, f \rangle | &
    \leqslant &| \langle \nabla \cdummy ((A - A_{n_k}) \cdummy \nabla u^{n_k}),
    f \rangle | + | \langle (b_2  - b_2^{n_k}) \cdummy \nabla u^{n_k}, f
    \rangle |\\
    & \leqslant &\| \nabla u^{n_k} \|_{L^2} (\| \nabla f \|_{L^{\infty}} \| (A
    - A_{n_k}) 1_{\tmop{supp} f} \|_{L^2} + \| f \|_{L^{\infty}} \| (b_2  -
    b_2^{n_k}) 1_{\tmop{supp} f} \|_{L^2}),
  \end{eqnarray*}
  and $\| \nabla u^{n_k} \|_{L^2} \lesssim \| u^{n_k} \|_{H^1} \lesssim 1$.
  Thus, we can apply the dominated convergence theorem and obtain
  \[ \lambda u = \mathLaplace u + \nabla \cdummy (A \cdummy \nabla u) + b_2 
     \cdummy \nabla u + u^{\sharp}, \]
  meaning
  \begin{equation}
    (\lambda - \mathcal{L}) u = u^{\sharp} . \label{surjective}
  \end{equation}
  In particular, setting
  \[ \mathcal{D} \assign \mathcal{K} L^2, \]
  we obtain an operator $(\mathcal{D}, \mathcal{L})$ on $L^2$. To show that
  this operator generates a strongly continuous semigroup on $L^2$, we apply
  the theorem of Lumer-Phillips for reflexive Banach spaces (see Corollary
  II.3.20 in {\cite{EngelNagel00}}) to $\mathcal{L}- a$, where $a \in [0,
  \lambda)$. Since $L^2$ is a reflexive Banach space and $(\lambda - a) -
  (\mathcal{L} - a) = \lambda -\mathcal{L}$ is surjective due to
  (\ref{surjective}) and $\lambda - a > 0$, it remains to identify $a \in [0,
  \lambda)$ such that $\langle u, (\mathcal{L} - a) u \rangle \leqslant 0$ for
  all $u \in \mathcal{D}$. Since any $u \in \mathcal{D}$ solves
  (\ref{surjective}) for some $u^{\sharp} \in L^2$, we obtain by weak
  convergence in $H^1$
  \begin{eqnarray*}
    \langle u, (\mathcal{L} - a) u \rangle & = & - \langle u, u^{\sharp} \rangle
    + (\lambda - a) \| u \|_{L^2}^2\\
    & =& - \lim_k \langle u^{n_k}, u^{\sharp} \rangle + (\lambda - a) \| u
    \|_{L^2}^2\\
    & =& - \lim_k \langle u^{n_k}, (\lambda - \mathcal{L}^{n_k}) u^{n_k}
    \rangle + (\lambda - a) \| u \|_{L^2}^2\\
    & =& - \lim_k \| (\lambda - \mathLaplace)^{1 / 2} u^{n_k} \|_{L^2}^2 -
    \langle u^{n_k}, b_2^{n_k} \cdummy \nabla u^{n_k} \rangle + (\lambda - a)
    \| u \|_{L^2}^2\\
    & \leqslant& - \lim_k \| (\lambda - \mathLaplace)^{1 / 2} u^{n_k}
    \|^2_{L^2} - \langle u^{n_k}, b_2^{n_k} \cdummy \nabla u^{n_k} \rangle +
    \lambda^{- 1} (\lambda - a) \| (\lambda - \mathLaplace)^{1 / 2} u
    \|^2_{L^2}\\
    & \leqslant & \liminf_k (- 1 + C + \lambda^{- 1} (\lambda - a)) \| (\lambda
    - \mathLaplace)^{1 / 2} u^{n_k} \|^2_{L^2},
  \end{eqnarray*}
  where $C < 1$ comes from Lemma \ref{Laplacebound} as before. Now the claim
  follows by choosing $a < \lambda$ close enough to $\lambda$. We obtain that
  $(\mathcal{D}, (\mathcal{L} - a))$ generates a strongly continuous semigroup
  on $L^2$ and thus the same is true for $(\mathcal{D}, \mathcal{L})$.
\end{proof}

\begin{remark}
  \label{remarkstrongconvergence}For $M =\mathbb{T}^d$ we can use the compact
  embedding $H^1 \hookrightarrow L^2$ to obtain the strong convergence
  $u^{n_k} \rightarrow u$ in $L^2$. Therefore, also
  \[ \mathcal{L}^{n_k} u^{n_k} = - u^{\sharp} + \lambda u^{n_k} \rightarrow -
     u^{\sharp} + \lambda u = \mathcal{L} u \]
  in $L^2$, and this simultaneous convergence $(u^{n_k}, \mathcal{L}^{n_k}
  u^{n_k}) \rightarrow (u, \mathcal{L}u)$ for any $u \in \mathcal{D}$ is
  equivalent to the convergence of semigroups
  \[ P^{n_k}_t f \rightarrow P_t f, \]
  in $L^2 (\mathbb{T}^d)$ for any $f \in L^2 (\mathbb{T}^d)$ and $t \geqslant
  0$. On $M =\mathbb{R}^d$ we would have to work with a suitable weighted
  $L^2$ space.
\end{remark}

Let $X$ be a continuous incompressible process (see \ref{defenergysolutions}
and \ref{defenergysolutionsEuclid} for the definition) and let $g \in L^2
(\tmop{Leb})$. Then,
\[ \int_0^t | g | (X_s) \mathd s < \infty, \]
almost surely. Indeed, for $f \in C^{\infty}_c (M)$ we have
\[ \mathbb{P} \left( \int_0^t | f | (X_s) > N \right) \leqslant \mathbb{P}
   \left( \int_0^t | f | (X_s) > N, \sup_{0 \leqslant s \leqslant t} | X_s |
   \leqslant L \right) +\mathbb{P} (\sup_{0 \leqslant s \leqslant t} | X_s | >
   L), \]
and for given $\varepsilon > 0$ we can bound the second term by
$\frac{\varepsilon}{2}$ by fixing $L (\varepsilon)$ large enough. Then we
bound with incompressibility of $X$,
\[ \mathbb{P} \left( \int_0^t | f | (X_s) > N, \sup_{0 \leqslant s \leqslant
   t} | X_s | \leqslant L \right) \lesssim_t \frac{\varepsilon}{2} + \frac{M
   (\varepsilon)}{N} \| 1_{\bar{B}_L (0)} | f | \|_{L^1} \lesssim
   \frac{\varepsilon}{2} + L (\varepsilon)^{d / 2} \frac{M (\varepsilon)}{N}
   \| f \|_{L^2}, \]
meaning that, redefining $M (\varepsilon)$, we arrive at
\[ \mathbb{P} \left( \int_0^t | f | (X_s) > N \right) \lesssim \varepsilon +
   \frac{M (\varepsilon)}{N} \| f \|_{L^2} . \]
Letting $C^{\infty}_c \ni f^n \rightarrow g$ in $L^2$ and applying the
Portmanteau theorem we arrive at the desired result. Thus the following
definition makes sense.

\begin{definition}[Martingale problem]
  \label{defmartingalesol}Let $A \in L^2_{\tmop{loc}} (M ; \mathbb{R}^{d
  \times d})$ be antisymmetric $\tmop{Leb}_M$-a.e. and let $b_2 : M
  \rightarrow \mathbb{R}^d$ be measurable. Let furthermore $\mu$ be a
  probability measure on $M$ with $\mu \ll \tmop{Leb}$ and let $X$ be a
  continuous $M$-valued process which is incompressible in probability. Let
  $\mathcal{D} \subset H^1$ be such that $\mathcal{L}u \in L^2$ for all $u \in
  \mathcal{D}$, where $\mathcal{L} u = \mathLaplace u + \nabla \cdummy (A
  \cdummy \nabla u) + b_2 \cdummy \nabla u$. We call $X$ a {\tmem{solution to
  the martingale problem for $(\mathcal{D}, \mathcal{L})$}} with initial
  distribution $\mu$, if $X (0) \sim \mu$ and for all $u \in \mathcal{D}$ the
  process
  \[ u (X_t) - u (X_0) - \int_0^t \mathcal{L} u (X_s) \mathd s, \qquad t
     \geqslant 0, \]
  is a continuous local martingale in the filtration generated by $X$.
\end{definition}

For the next lemma, we define for $\mathcal{D}$ as in
Proposition~\ref{existencesemigroup}:
\[ \mathcal{D}_{\infty} \assign \{ f \in \mathcal{D} \cap L^{\infty} :
   \mathcal{L}f \in L^2 \cap L^{\infty} \} . \]
Also, recall that an $L^2$-semigroup $(P_t)_{t \geqslant 0}$ is called
{\tmem{Markovian}} if $P_t f \geqslant 0$ for all $f \geqslant 0$ and $P_t f
\leqslant 1$ for all $f \leqslant 1$, where all inequalities have to be
interpreted in an almost everywhere sense. This implies that $\| P_t u
\|_{L^{\infty}} \leqslant \| u \|_{L^{\infty}}$ for all $u \in L^2 \cap
L^{\infty}, t \geqslant 0$, writing $\frac{\pm u}{\| u \|_{\infty}} \leqslant
1$.

\begin{lemma}
  If $(\mathcal{D}, \mathcal{L})$ is the generator of a strongly continuous,
  Markovian $L^2$-semigroup $(P_t)_{t \geqslant 0}$, then any solution to the
  martingale problem for $(\mathcal{D}, \mathcal{L})$ is a Markov process and
  its law is uniquely determined by its initial distribution.
\end{lemma}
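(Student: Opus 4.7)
The plan is to follow the classical Markov-semigroup characterization of the martingale problem, using the resolvent $R_\lambda = \int_0^\infty e^{-\lambda t} P_t\, dt$ to transfer information about $(P_t)$ to information about path-space expectations of $X$. First I would observe that for $\lambda > 0$ sufficiently large and any $f \in L^2 \cap L^\infty$, the resolvent $u = R_\lambda f$ lies in $\mathcal{D}$ and satisfies $(\lambda - \mathcal{L})u = f$; since $(P_t)$ is Markovian we have $\|P_t f\|_\infty \leq \|f\|_\infty$, hence $\|u\|_\infty \leq \lambda^{-1}\|f\|_\infty$ and $\mathcal{L}u = \lambda u - f \in L^2 \cap L^\infty$. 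Thus $u \in \mathcal{D}_\infty$, and the class $R_\lambda(L^2\cap L^\infty)$ is a rich subset of $\mathcal{D}_\infty$ on which I can test the martingale problem.

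Next, for such $u$ and a solution $X$ to the martingale problem, the process $M^u_t = u(X_t)-u(X_0)-\int_0^t \mathcal{L}u(X_s)\,\mathd s$ is a continuous local martingale. Incompressibility of $X$ yields $\operatorname{law}(X_s) \ll \operatorname{Leb}$, so $u(X_s)$ and $\mathcal{L}u(X_s)$ are a.s.\ well-defined as $L^\infty$-equivalence classes and uniformly bounded on $[0,T]$ by $\|u\|_\infty$ and $\|\mathcal{L}u\|_\infty$; hence $M^u$ is in fact a bounded martingale. Applying integration by parts to $e^{-\lambda t}u(X_t)$ and using $\mathcal{L}u = \lambda u - f$ gives
\[
e^{-\lambda t} u(X_t) = u(X_0) - \int_0^t e^{-\lambda s} f(X_s)\,\mathd s + \int_0^t e^{-\lambda s}\,\mathd M^u_s.
\]
Taking expectations, sending $t\to\infty$ via $e^{-\lambda t}\|u\|_\infty \to 0$, and applying Fubini on the left, I obtain
\[
\int R_\lambda f\,\mathd \mu = \int_0^\infty e^{-\lambda s}\,\mathbb{E}[f(X_s)]\,\mathd s = \int_0^\infty e^{-\lambda s}\!\left(\int P_s f\,\mathd \mu\right)\mathd s.
\]
By the uniqueness of Laplace transforms of bounded measurable functions of $s$, this identity forces $\mathbb{E}[f(X_s)] = \int P_s f\,\mathd \mu$ for a.e.\ $s\geqslant 0$. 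Since this holds for every $f\in L^2\cap L^\infty$, and $L^2\cap L^\infty$ is measure-determining on the absolutely continuous laws $\operatorname{law}(X_s)$, the one-dimensional marginals of $X$ are uniquely determined by $\mu$.

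For the Markov property and uniqueness of all finite-dimensional distributions I would run the same argument conditionally. Fix $t\geqslant 0$ and consider the shifted process $Y_r \assign X_{t+r}$. Because $M^u$ is a martingale in the filtration generated by $X$, for any bounded $\mathcal{F}_t^X$-measurable $H$ the process $(M^u_{t+r}-M^u_t)H$ is a martingale in $r$, so $Y$ solves the martingale problem for $(\mathcal{D},\mathcal{L})$ under the conditional law $\mathbb{P}(\,\cdot\,\mid \mathcal{F}_t^X)$, with initial value $X_t$; incompressibility of $Y$ under this conditional law follows from the one for $X$ by a standard conditioning argument. Re-running Steps 1--3 conditionally then yields
\[
\mathbb{E}[f(X_{t+s})\mid \mathcal{F}_t^X] = (P_s f)(X_t) \qquad \text{for a.e.\ } s\geqslant 0,
\]
which is the Markov property. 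Iterating over time points gives uniqueness of all finite-dimensional distributions, hence uniqueness of the law on $C(\mathbb{R}_+;M)$.

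I expect the main obstacle to be the conditional step: passing from the martingale problem under $\mathbb{P}$ to a martingale problem under the regular conditional probability $\mathbb{P}(\,\cdot\,\mid \mathcal{F}_t^X)$, and verifying that the conditional process remains incompressible so that the Step 2--3 identities can be applied to it. Modulo this, everything reduces to the Laplace-transform matching above. A minor additional subtlety is to justify the passage from ``a.e.\ $s$'' to ``all $s$'' in the identification $\mathbb{E}[f(X_s)] = \int P_s f\,\mathd\mu$, which is handled by approximating arbitrary bounded Borel test functions by resolvent images and using strong continuity of $P_t$ on $L^2$ together with the $L^1$-density of $\mu$.
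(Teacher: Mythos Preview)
Your resolvent/Laplace-transform approach is correct and is a genuinely different route from the paper's. The paper first shows that $\mathcal{D}_\infty$ is rich by using the approximants $u^m = m\int_0^{1/m} P_t u\,\mathd t$, and then, instead of inverting a Laplace transform, performs a time-discretization: for $u\in\mathcal{D}_\infty$ and fixed $T$ it sets $u(t)=P_{T-t}u$ and telescopes $u(t,X_t)-u(0,X_0)$ along a partition, showing that the remainder vanishes in $L^1(\mathbb{P})$ so that $t\mapsto P_{T-t}u(X_t)-P_T u(X_0)$ is itself a continuous martingale. This single martingale already encodes the Markov property, since conditioning at time $s<T$ gives $\mathbb{E}[u(X_T)\mid\mathcal{F}_s]=P_{T-s}u(X_s)$ directly; no conditional restart and no ``a.e.\ $s$ to all $s$'' upgrade are needed.

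Your route trades this telescoping argument for a Laplace-inversion step, which is equally standard (cf.\ Ethier--Kurtz). Two comments on the obstacles you flag. First, the conditional step does not actually require verifying that the shifted process is incompressible in the sense of Definition~\ref{defmartingalesol}: you only need the conditional martingale property of $M^u$, which is automatic, together with the \emph{unconditional} incompressibility to ensure that $f(X_{t+s})$ is well-defined as an equivalence class. Running your integration-by-parts identity from time $t$ onwards and taking $\mathbb{E}[\,\cdot\mid\mathcal{F}_t]$ gives $R_\lambda f(X_t)=\int_0^\infty e^{-\lambda s}\,\mathbb{E}[f(X_{t+s})\mid\mathcal{F}_t]\,\mathd s$ directly. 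Second, the ``a.e.\ $s$'' issue is genuine for general $f\in L^2\cap L^\infty$ because neither $s\mapsto\mathbb{E}[f(X_s)]$ nor $s\mapsto\int P_s f\,\mathd\mu$ is obviously continuous when $\eta\in L^1$ only; the paper's time-discretization bypasses this entirely by producing the identity at a fixed deterministic time $T$.
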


\begin{proof}
  We first show a useful property for $\mathcal{D}_{\infty}$ which is that
  for any $u \in L^2 \cap L^{\infty}$ there exists a sequence $(u^m) $ in
  $\mathcal{D}_{\infty}$ such that $\sup_m \| u^m \|_{L^{\infty}} < \infty$
  and $u^m \rightarrow u$ in $L^2$. Indeed we define for $u \in L^2 \cap
  L^{\infty}$ the approximation $u^m = m \int_0^{1 / m} P_t u \mathrm{d} t$ and have
  that $u^m \in \mathcal{D}$ and $u^m \rightarrow u$ in $L^2$ (see e.g. the
  proof of {\cite[Theorem VII.4.6.]{Werner2018}}). Moreover
  \[ \sup_m \| u^m \|_{L^{\infty}} \leqslant \| u \|_{L^{\infty}} . \]
  It remains to check that $u^m \in \mathcal{D}_{\infty}$ which reduces to
  showing that $\mathcal{L} u^m \in L^{\infty}$ and indeed we have
  $\mathcal{L} u^m = m (P_{1 / m} u - u) \in L^{\infty}$. Let now $X$ be a
  solution to the martingale problem and let $u \in \mathcal{D}_{\infty}$.
  Then,
  \begin{equation}
    u (X_t) - u (X_0) - \int_0^t \mathcal{L} u (X_s) \mathd s,
  \end{equation}
  is not only a local but a proper continuous martingale since $u, \mathcal{L}
  u \in L^{\infty}$. Now we can perform the same time-discrete approximation
  as in Lemma A.3 of {\cite{GubinelliPerkowski20}} to conclude that for any $u
  \in \mathcal{D}_{\infty} (\mathcal{L})$
  \begin{equation}
    u (X_t) - P_t u (X_0), \label{finalmartingale}
  \end{equation}
  is a continuous martingale. Indeed, defining $u (t) = P_{T - t} u$ we can
  write with $t_k^n = t \frac{k}{n}, k \in \mathbb{N}_0, n \in \mathbb{N}$,
  \begin{eqnarray*}
    u (t, X_t) - u (0, X_0) & = &\sum_{k = 0}^{\infty} u \left( t^n_{k + 1}
    {\wedge t, X_{t^n_{k + 1} \wedge t}}  \right) - u \left( t^n_k {\wedge t,
    X_{t^n_{k + 1} \wedge t}}  \right)\\
    & &+ u \left( t^n_k {\wedge t, X_{t^n_{k + 1} \wedge t}}  \right) - u
    \left( t^n_k {\wedge t, X_{t^n_k \wedge t}}  \right)\\
    & =& \sum_{k = 0}^{\infty} \int_{t^n_k \wedge t}^{t^n_{k + 1} \wedge t}
    \partial_s u (s, X_{t^n_{k + 1} \wedge t}) \mathd s + \int_{t^n_k \wedge
    t}^{t^n_{k + 1} \wedge t} \mathcal{L} u (t^n_k \wedge t, X_s) \mathd s\\
    & &+ M^n_t \\
    & = &\sum_{k = 0}^{\infty} \int_{t^n_k \wedge t}^{t^n_{k + 1} \wedge t} v
    (t^n_k \wedge t, X_s) - v (s, X_{t^n_{k + 1} \wedge t}) \mathd s + M^n_t,
  \end{eqnarray*}
  where $M^n_t = \sum_{k = 0}^{\infty} M^{u (t^n_k \wedge t)}_{t^n_{k + 1}
  \wedge t} - M^{u (t^n_k \wedge t)}_{t^n_k \wedge t}$, ${M^f} $ is the
  continuous martingale from (\ref{martingaletobeshown}), which is proper as
  $P_t \mathcal{D}_{\infty} \subset \mathcal{D}_{\infty}$ since $\mathcal{L}
  P_t = P_t \mathcal{L}$, and we have put $v = \mathcal{L} u = P_{T - \cdummy}
  \mathcal{L} u \in L^{\infty}_T L^{\infty} \cap C_T L^2$. By the dominated
  convergence theorem it follows that the sum converges to $0$ in $L^1
  (\mathbb{P})$ if we can show for fixed $s \leqslant t$ that
  \[ v ([s]^n \wedge t, X_s) - v \left( s, X_{\left( [s]^n + \frac{1}{n}
     \right) \wedge t} \right) \rightarrow 0, \]
  in probability, where $[s]^n = \max \{ t_k^n : t_k^n \leqslant s \}
  \leqslant t$. We write
  \begin{eqnarray*}
    v ([s]^n, X_s) - v \left( s, X_{\left( [s]^n + \frac{1}{n} \right) \wedge
    t} \right) & = & v ([s]^n, X_s) - v (s, X_s)
    + v (s, X_s) - v \left( s, X_{\left( [s]^n + \frac{1}{n} \right) \wedge
    t} \right),
  \end{eqnarray*}
  and observe that the first difference vanishes in probability by
  incompressibility of $X$ and since $v \in C_T L^2$. The second difference
  vanishes choosing $f \in C (M)$ such that $\tmop{Leb} (| f - v(s) | > \delta /
  3) < \varepsilon/M_\varepsilon$ and then writing
  \begin{eqnarray*}
    &&\mathbb{P} \left( \left| v (s, X_s) - v \left( s, X_{\left( [s]^n +
    \frac{1}{n} \right) \wedge t} \right) \right| > \delta \right)\\
   & \leqslant & \mathbb{P} (| v (s, X_s) - f ( X_s) | > \delta / 3)
    +\mathbb{P} \left( \left| f ( X_s) - f \left(  X_{\left( [s]^n +
    \frac{1}{n} \right) \wedge t} \right) \right| > \delta / 3 \right)\\
    &&+\mathbb{P} \left( \left| f \left(  X_{\left( [s]^n + \frac{1}{n}
    \right) \wedge t} \right) - v \left(  X_{\left( [s]^n + \frac{1}{n}
    \right) \wedge t} \right) \right| > \delta / 3 \right)\\
    &\leqslant & 2 (\varepsilon + M_{\varepsilon} \frac{\varepsilon}{M_\varepsilon}) +\mathbb{P} \left( \left| f ( X_s) - f \left( 
    X_{\left( [s]^n + \frac{1}{n} \right) \wedge t} \right) \right| > \delta /
    3 \right),
  \end{eqnarray*}
  where we used the incompressibility of $X$ in the second inequality, we get
  that the last term vanishes as $n \rightarrow \infty$ by continuity of $f$
  and of the trajectories of $X$. Hence, the process (\ref{finalmartingale})
  is indeed a proper continuous martingale and in particular
  \[ \mathbb{E} [u (X_t)] =\mathbb{E} [P_t u (X_0)] . \]
  We can extend this identity by approximating arbitrary $u \in L^{\infty}
  \cap L^2$ using the approximation property for $\mathcal{D}_{\infty}
  (\mathcal{L})$ from the beginning of this proof and obtain uniqueness of
  one-dimensional distributions. The proof of uniqueness of finite-dimensional
  distributions and of the Markov property then follows the same iterative
  argument as in {\cite{GubinelliPerkowski20}} or the proof of Theorem
  \ref{WellposednessBesov}.
\end{proof}

\tmcolor{red}{}Similarly to {\cite{GubinelliPerkowski20}} we need that
$C_c^{\infty} (M)$ is a core for $\mathcal{L}$ w.r.t. $H^{- 1}$ in order to
link energy solutions to the martingale problem for $(\mathcal{D},
\mathcal{L})$. This is part of the next proposition.

\begin{proposition}
  \label{uniquenessabstract}Let $b_i \in \mathcal{S}' (M)$ and $b = b_1 + b_2$
  be as in Proposition~\ref{existencesemigroup}, let $\lambda > \lambda_0 > 0$
  and let the domain $\mathcal{D}$ as well as the semigroup $P$ be as in that
  proposition.
  \begin{enumerateroman}
    \item \label{uniquenessdomain}Suppose that $(\lambda -\mathcal{L})$ is
    injective on $H^1$, i.e. for any $u \in H^1$
    \begin{equation}
      (\lambda - \mathcal{L}) u = 0 \qquad \Rightarrow \qquad u = 0.
      \label{injective}
    \end{equation}
    Then
    \[ \mathcal{D} =\mathcal{D}_{\max} (\mathcal{L}) \assign \{ h \in H^1 :
       \mathcal{L} h \in L^2 \}, \]
    where $\mathcal{D}_{\max} (\mathcal{L})$ is the maximal domain of
    $\mathcal{L}$. In particular, $P$ is unique. Moreover $P$ is Markovian.
    
    \item Additionally, suppose that
    \[ (\lambda - \mathcal{L}) C_c^{\infty} (M), \]
    is dense in $H^{- 1} (M)$. Then, any energy solution $X$ to
    (\ref{MainSDE}) is a solution to the martingale problem for
    $(\mathcal{D}_{\max} (\mathcal{L}), \mathcal{L})$. In particular, $X$ is a
    Markov process and its law is uniquely determined by its initial
    distribution. 
  \end{enumerateroman}
\end{proposition}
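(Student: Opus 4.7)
The plan is to use Proposition~\ref{existencesemigroup} twice: for part~(i), combine surjectivity of $(\lambda - \mathcal{L})\colon \mathcal{D} \to L^2$ with the injectivity hypothesis~\eqref{injective} to identify $\mathcal{D}$ with $\mathcal{D}_{\max}(\mathcal{L})$; for part~(ii), use the extended continuous resolvent $(\lambda-\mathcal{L})^{-1}\in L(H^{-1},H^1)$ together with the density hypothesis to approximate any $u\in\mathcal{D}_{\max}(\mathcal{L})$ by smooth test functions and transfer the martingale property of energy solutions (\ref{martingaleproperty}/\ref{martingalepropertyEucild}) from $C_c^\infty(M)$ to $u$.

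For part~(i) I would first note the trivial inclusion $\mathcal{D}\subset\mathcal{D}_{\max}(\mathcal{L})$. Conversely, for $u\in\mathcal{D}_{\max}(\mathcal{L})$ set $u^\sharp:=(\lambda-\mathcal{L})u\in L^2$; Proposition~\ref{existencesemigroup} gives $v=\mathcal{K}u^\sharp\in\mathcal{D}$ with $(\lambda-\mathcal{L})v=u^\sharp$, and then \eqref{injective} forces $u-v=0$, so $u\in\mathcal{D}$. Uniqueness of $P$ is automatic since $(\mathcal{D},\mathcal{L})$ is now intrinsically determined. For Markovianity I would use that for smooth $b^{n_k}$ the resolvent $(\lambda-\mathcal{L}^{n_k})^{-1}$ is positivity-preserving and $\lambda(\lambda-\mathcal{L}^{n_k})^{-1}$ is an $L^\infty$-contraction by the classical maximum principle for the parabolic operator with smooth coefficients. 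Since the cones $\{u\geqslant 0\}$ and $\{u\leqslant 1\}$ in $L^2$ are convex and strongly closed, hence weakly closed, and since $(\lambda-\mathcal{L}^{n_k})^{-1}f\to(\lambda-\mathcal{L})^{-1}f$ weakly in $H^1\hookrightarrow L^2$, these bounds pass to the limit. Markovianity of $P$ then follows from Markovianity of the resolvent via the Hille--Yosida generation formula.

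For part~(ii), let $X$ be an energy solution and $u\in\mathcal{D}_{\max}(\mathcal{L})$. Set $u^\sharp:=(\lambda-\mathcal{L})u\in L^2\subset H^{-1}$. By the density hypothesis there exist $f_n\in C_c^\infty(M)$ with $(\lambda-\mathcal{L})f_n\to u^\sharp$ in $H^{-1}$; applying the extended continuous inverse $(\lambda-\mathcal{L})^{-1}\in L(H^{-1},H^1)$ from part~(i), I get $f_n\to u$ in $H^1$ and $\mathcal{L}f_n\to\mathcal{L}u$ in $H^{-1}$, and in particular in $L^2_T H^{-1}$ since $b$ is time-independent. Since $f_n\in C_c^\infty(M)$, the energy solution properties produce continuous local martingales
\[
  M^{f_n}_t \;=\; f_n(X_t)-f_n(X_0)-\int_0^t \mathcal{L}f_n(X_s)\,\mathd s,
\]
with quadratic variation $\int_0^t|\nabla f_n(X_s)|^2\,\mathd s$. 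Incompressibility in probability, applied to $f_n-u\to 0$ in $L^2$, yields $f_n(X_t)\to u(X_t)$ in probability; the energy estimate in probability applied to $\mathcal{L}f_n-\mathcal{L}u\in L^2_T H^{-1}$ and the incompressibility estimate applied to $|\nabla f_n|^2-|\nabla u|^2\in L^1_T L^1$ yield ucp-convergence of the drift integrals and the quadratic variations, respectively. Theorem~VI.4.13 and Proposition~IX.1.17 in~\cite{Jacod2003} then identify the ucp-limit $M^u_t=u(X_t)-u(X_0)-\int_0^t\mathcal{L}u(X_s)\,\mathd s$ as a continuous local martingale, so $X$ solves the martingale problem for $(\mathcal{D}_{\max}(\mathcal{L}),\mathcal{L})$. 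The Markov property and uniqueness in law with given initial distribution then follow from the preceding lemma applied to the Markovian semigroup $P$.

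The main obstacle is the approximation step in part~(ii). A direct mollification of $u$ does not respect the singular operator $\mathcal{L}$, since the commutator of $\mathcal{L}$ with smoothing is not controlled under the structural assumptions on $A$ from Theorem~\ref{Wellposednesslocaldiverging}. The density hypothesis on $(\lambda-\mathcal{L})C_c^\infty(M)$ in $H^{-1}$ sidesteps this by constructing the approximation on the right-hand-side level $u^\sharp\in H^{-1}$ and pulling it back through the continuous resolvent, whose existence as a bounded map $H^{-1}\to H^1$ is precisely what part~(i) guarantees. Verifying this density in concrete situations is the nontrivial input and, as hinted at in the introduction, will be handled by the dual injectivity statement in Lemma~\ref{lemmainjective} combined with the construction of the cutoffs $g_n$ from Lemma~\ref{uniquenesscompactset}.
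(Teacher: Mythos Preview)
Your proposal is correct and follows essentially the same approach as the paper: in part~(i) you combine surjectivity from Proposition~\ref{existencesemigroup} with the injectivity hypothesis to get $\mathcal{D}=\mathcal{D}_{\max}(\mathcal{L})$, and pass Markovianity of the approximate resolvents through the weak limit to the Hille--Yosida formula; in part~(ii) you approximate $(\lambda-\mathcal{L})u$ in $H^{-1}$ by images of smooth functions, pull back through the bounded resolvent, and use the energy estimate plus incompressibility to pass to the limit in the local martingale---exactly as the paper does. One small attribution point: the bounded extension $(\lambda-\mathcal{L})^{-1}\in L(H^{-1},H^1)$ comes from Proposition~\ref{existencesemigroup}, not from part~(i); what part~(i) contributes is the identity $f_n=(\lambda-\mathcal{L})^{-1}(\lambda-\mathcal{L})f_n$, which indeed requires the injectivity hypothesis.
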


\begin{proof}
  
  \begin{enumerateroman}
    \item Suppose $(\lambda -\mathcal{L})$ is injective. Clearly, $\mathcal{D}
    \subset \mathcal{D}_{\max} (\mathcal{L})$. To prove the opposite
    inclusion, let $h \in H^1$ be such that $\mathcal{L} h \in L^2$ and set
    $u^{\sharp} \assign (\lambda - \mathcal{L}) h$, so that $u^{\sharp} \in
    L^2$. Then
    \[ (\lambda - \mathcal{L}) ((\lambda - \mathcal{L})^{- 1} u^{\sharp} - h)
       = u^{\sharp} - u^{\sharp} = 0, \]
    and thus $(\lambda - \mathcal{L})^{- 1} u^{\sharp} = h$. Since $(\lambda -
    \mathcal{L})^{- 1} u^{\sharp} \in \mathcal{D}$, we get $h \in
    \mathcal{D}$. In order to see that $P_t$ is Markovian we use the fact that
    for any $u^{\sharp} \in L^2$ with $u^{\sharp} \leqslant 1$ almost
    everywhere,
    \[ (\lambda - \mathcal{L})^{- 1} u^{\sharp} \leqslant \frac{1}{\lambda},
    \]
    which follows from the weak convergence $(\lambda - \mathcal{L}^n)^{- 1}
    u^{\sharp} \rightarrow (\lambda - \mathcal{L})^{- 1} u^{\sharp}$ in $H^1$
    (without denoting the subsequence), noting that for any bounded $A \in
    \mathcal{B} (M)$,
    \[ \langle 1_A, (\lambda - \mathcal{L})^{- 1} u^{\sharp} \rangle = \lim_n
       \langle 1_A, (\lambda - \mathcal{L}^n)^{- 1} u^{\sharp} \rangle =
       \lim_n \langle 1_A, \int_0^{\infty} e^{- \lambda t} P_t^n u^{\sharp}
       \rangle \leqslant \frac{1}{\lambda} \tmop{Leb} (A), \]
    implying $(\lambda - \mathcal{L})^{- 1} u^{\sharp} \leqslant
    \frac{1}{\lambda}$ and we argue similarly for the implication $u^{\sharp}
    \geqslant 0 \Rightarrow (\lambda - \mathcal{L})^{- 1} u^{\sharp} \geqslant
    0$. Using the approximation
    \begin{equation}
      P_t u = \lim_m \left( \frac{m}{t} \left( \frac{m}{t} - \mathcal{L}
      \right)^{- 1} \right)^m u, \label{Hille}
    \end{equation}
    (see e.g. {\cite[IX-1.2]{Kato80}}) we get that $P$ is indeed Markovian.
    
    \item Let now $X$ be an energy solution to (\ref{MainSDE}) and suppose
    $(\lambda - \mathcal{L}) C_c^{\infty} (M)$ is dense in $H^{- 1}$. Note
    that indeed $\mathcal{L}u \in H^{- 1}$ for $u \in C_c^{\infty}$ (if $M
    =\mathbb{R}^d$) or $u \in C^{\infty}$ (if $M =\mathbb{T}^d$),
    respectively, since by H{\"o}lder's inequality
    \begin{eqnarray*}
      \| \nabla \cdummy (A \cdummy \nabla u) \|_{H^{- 1}} & \lesssim & \| A
      \cdummy \nabla u \|_{L^2} \lesssim \| A \|_{L^p} \| \nabla u \|_{{L^q}
      },\\
      \| b_2 \cdummy \nabla u \|_{H^{- 1}} & \lesssim & \| b_2 \|_{L^r} \|
      \nabla u \|_{L^s}
    \end{eqnarray*}
    for some $r, s \in [1, \infty]$. Let $u \in \mathcal{D}_{\max}
    (\mathcal{L})$. We show that
    \begin{equation}
      u (X_t) - u (X_0) - \int_0^t \mathcal{L} u (X_s) \mathd s,
      \label{martingaletobeshown}
    \end{equation}
    is a continuous local martingale in the filtration generated by $X$.
    Indeed, let $u^m \in C_c^{\infty}$ be a sequence such that $(\lambda -
    \mathcal{L}) u^m \rightarrow (\lambda - \mathcal{L}) u$ in $H^{- 1}$. It
    follows that $u^m = (\lambda - \mathcal{L})^{- 1} (\lambda - \mathcal{L})
    u^m \rightarrow (\lambda - \mathcal{L})^{- 1} (\lambda - \mathcal{L}) u =
    u$ in $H^1$. Then, $\int_0^{\cdummy} \mathcal{L} u^m (X_s) \mathd s \rightarrow
    \int_0^{\cdummy} \mathcal{L} u (X_s) \mathd s$ in ucp by the energy estimate
    and $u^m (X_t) \rightarrow u (X_t), u^m (X_0) \rightarrow u (X_0)$ in
    probability by incompressibility so that we can argue as as in the proof
    of Theorem \ref{WellposednessBesov} that \ref{martingaletobeshown} is a
    continuous local martingale.
  \end{enumerateroman}
\end{proof}

\begin{remark}
  We can use Proposition \ref{existencesemigroup} to construct (possibly
  non-unique and non-continuous) Markov processes for the limiting equation
  (\ref{MainSDE}). This involves an (almost) classical construction of a
  process from finite-dimensional distributions, described by the semigroup
  $P$, via Kolmogorovs extension theorem and an application of Proposition
  1.2.3 from {\cite{BakryGentilLedoux14}}. Proposition
  \ref{existencesemigroup} then yields the subsequential convergence of the
  finite-dimensional distributions of $X^n$ to those of such a process, see
  also Remark \ref{remarkstrongconvergence}. Then \ref{uniquenessdomain} from
  Proposition~\ref{uniquenessabstract} gives uniqueness of the limit, even if
  we cannot prove tightness and thus the existence of energy solutions is not
  known. For example, in the context of
  Theorem~\ref{Wellposednesslocaldiverging} $A \in L^2$ is allowed, and
  general critical $b_2 \in L^d$ with a small enough prefactor, instead of
  additionally requiring $b_2 \in B^0_{2 r, 1, 2}$.
\end{remark}

\begin{lemma}
  \label{lemmainjective}Let $A \in L^p (M ; \mathbb{R}^{d \times d})$ for $p
  \in [2, \infty]$ be antisymmetric $\tmop{Leb} - a.e.$ and assume that there
  exists a sequence $(g_n)_{n \in \mathbb{N}} \subset L^{\infty} (M)$ such
  that $\nabla g_n \in L^2 (M) \cap L^{\frac{2 p}{p - 2}} (M)$ with the
  following properties
  \begin{enumerateroman}
    \item $\nabla g_n \rightarrow 0$ weakly in $L^2$, $\int g_n h \rightarrow
    \int h$ for all $h \in L^1$.\label{lemmaassumptioni}
    
    \item $g_n A \in L^{\infty}, n \in \mathbb{N}.$\label{lemmaassumptionii}
    
    \item $A \cdummy \nabla g_n \rightarrow 0$ weakly in
    $L^2$.\label{lemmaassumptioniii}
  \end{enumerateroman}
  Let furthermore $b_2 \in L^r$ for some $r > d$ or, in $d \geqslant 3$, $b_2
  \in L^d$ and $\| b_2 \|_{L^d} \leqslant K_d$ for a certain constant $K_d >
  0$ depending only on $d$, and set $\mathcal{A} f = b_2 \cdummy \nabla f$ or
  $\mathcal{A} = \nabla \cdummy (b_2 f)$. Then there exists $\lambda_0 > 0$
  such that for all $\lambda > \lambda_0$, $u = 0$ is the only solution $u \in
  H^1$ to
  \begin{equation}
    \lambda u - \mathLaplace u - \nabla \cdummy (A \cdummy \nabla u) -
    \mathcal{A} u = 0 . \label{zeroeq}
  \end{equation}
\end{lemma}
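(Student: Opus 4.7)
The plan is to test the equation $(\lambda - \mathcal{L})u = 0$ against $\varphi_{n,m} := g_n u^m$, where $u^m := (u \wedge m) \vee (-m)$ is the truncation of $u$ at height $m$. The truncation guarantees $u^m \in L^\infty$ with $\nabla u^m = \nabla u \cdot \mathbbm{1}_{\{|u|<m\}} \in L^2$, so that $\varphi_{n,m} \in L^\infty \cap H^1$ via the product rule $\nabla \varphi_{n,m} = g_n \nabla u^m + u^m \nabla g_n$. Splitting $\int A \nabla u \cdot \nabla \varphi_{n,m}$ according to this product rule keeps each piece separately absolutely integrable: the piece with $g_n \nabla u^m$ via $g_n A \in L^\infty$ from (ii), and the piece with $u^m \nabla g_n$ via H{\"o}lder on the dual pair $(2p/(p+2), 2p/(p-2))$, using $\nabla g_n \in L^{2p/(p-2)}$ and $u^m \in L^\infty$.

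The heart of the argument is twofold. First, $\int g_n A \nabla u \cdot \nabla u^m = 0$ pointwise a.e., since $g_n A$ remains antisymmetric. Second, the pointwise identity $A \nabla u \cdot \nabla g_n = -\nabla u \cdot A \nabla g_n$ (from antisymmetry of $A$) rewrites the remaining piece of the $A$-term as $-\int u^m \nabla u \cdot A \nabla g_n$; since $u^m \nabla u \in L^2$ and $A \nabla g_n \rightharpoonup 0$ in $L^2$ by (iii), this vanishes as $n \to \infty$. The analogous mixed Laplacian term $\int u^m \nabla u \cdot \nabla g_n$ vanishes by the weak $L^2$ convergence in (i), while the remaining diagonal integrals of the form $\int g_n h$ with $h \in L^1$ converge to $\int h$ by the weak-$*$ statement in (i).

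Sending first $n \to \infty$ and then $m \to \infty$ by dominated convergence (using $|u^m| \leq |u|$, $|\nabla u^m| \leq |\nabla u|$, and $r \geq d$ together with Sobolev $H^1 \hookrightarrow L^{2d/(d-2)}$ to guarantee $u \cdot b_2 \cdot \nabla u \in L^1$), the identity reduces to
\[
\lambda \|u\|_{L^2}^2 + \|\nabla u\|_{L^2}^2 = \langle \mathcal{A} u, u \rangle,
\]
where for the two choices of $\mathcal{A}$ the right-hand side equals $\pm \int u\, b_2 \cdot \nabla u$ (with an integration by parts in the $\nabla \cdot (b_2 \cdot)$ case). Lemma~\ref{Laplacebound} bounds the modulus of this pairing by $\varepsilon(\lambda \|u\|_{L^2}^2 + \|\nabla u\|_{L^2}^2)$ with $\varepsilon<1$, either by choosing $\lambda$ large (when $r>d$) or by the smallness of $\|b_2\|_{L^d}$ (when $r=d$), which forces $u = 0$.

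The hard part will be the careful bookkeeping of integrability in the distributional pairings, since $A \nabla u$ lies only in $L^{2p/(p+2)}$ and one cannot simply invoke a generic $L^2$-$L^2$ duality $\langle \nabla \cdot (A \nabla u), \varphi\rangle = -\int A \nabla u \cdot \nabla \varphi$ for an arbitrary $\varphi \in H^1$: the identity has to be justified piece by piece along the decomposition of $\nabla \varphi_{n,m}$, using the H{\"o}lder-complementary $L^{2p/(p-2)}$ integrability of $\nabla g_n$ on one side and the $L^\infty$ boundedness of $g_n A$ on the other. The truncation $u^m$ is precisely what makes the argument work on $\mathbb{R}^d$ and for large $p$, by keeping $u^m \nabla u \in L^2$ globally so that condition (iii) can be invoked.
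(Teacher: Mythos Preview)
Your proposal is correct and follows essentially the same route as the paper: test the equation against $g_n$ times a truncation of $u$, use pointwise antisymmetry of $A$ to kill the diagonal term, and use the weak convergences (i) and (iii) to kill the cross-terms with $\nabla g_n$, then send the truncation away and conclude via Lemma~\ref{Laplacebound}. The only differences are cosmetic: the paper uses a smooth truncation $f\circ u$ with $f\in C^\infty_c$, $f(0)=0$, in place of your hard cutoff $u^m$, and it packages the Laplacian and $A$-term together as $F=\nabla u+A\cdot\nabla u$ (so that $g_nF\in L^2$ directly from (ii)) before carrying out the integration-by-parts identity via an explicit double mollification $(g_{n,\ell},\,u_m)\in C^\infty\times C^\infty_c$. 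This is exactly the ``careful bookkeeping'' you flag as the hard part; your outline of how to do it piece by piece along $\nabla\varphi_{n,m}=g_n\nabla u^m+u^m\nabla g_n$ is correct and goes through with the same approximation once one replaces $g_n$ by $\rho_\ell\ast g_n$ and $u^m$ by $h_k\in C^\infty_c$ with $\|h_k\|_\infty\le m$, taking $\ell\to\infty$ first (using $\nabla g_n\in L^{2p/(p-2)}$ and dominated convergence on the compact support of $h_k$) and then $k\to\infty$ (using $g_nF\in L^2$).
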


\begin{proof}
  The idea is to test the equation against $u$ and to integrate by parts. Due
  to the low regularity of $u$ and $A$, this requires several approximations,
  which is where the sequence $(g_n)_{n \in \mathbb{N}}$ comes in.
  
  We first consider the case $b_2 \in L^r$, $r > d$. Let $u \in H^1$ be such
  that $\lambda u - \mathLaplace u - \nabla \cdummy (A \cdummy \nabla u) -
  \mathcal{A} u = 0$. We will truncate $u$ by applying a nonlinear function $f
  \in C^{\infty}_c (\mathbb{R}, \mathbb{R})$ with $f (0) = 0$ to $u$. Note
  that
  \[ \nabla (f \circ u) = (f' \circ u) \nabla u \in L^2 \qquad \text{and}
     \qquad f \circ u \in \bigcap_{p \in [2, \infty]} L^p . \]
  The second claim is clear because $f$ is bounded and $| f \circ u |
  \leqslant \| \nabla f \|_{\infty} | u |$ and thus $f \circ u \in L^2 \cap
  L^{\infty} \supset L^p$, and for the first claim we note that for all
  $\varphi \in C^{\infty}_c$ with $u_m = \rho^m \ast u$,
  \[ \langle \varphi, \nabla (f \circ u) \rangle = - \langle \nabla \varphi, f
     \circ u \rangle = - \lim_m \langle \nabla \varphi, f \circ u_m \rangle =
     \lim_m \langle \varphi, (f' \circ u_m) \nabla u_m \rangle = \langle
     \varphi, (f' \circ u) \nabla u \rangle, \]
  where we used the dominated convergence theorem in the second and third
  step.
  
  Consider now the function
  \[ F = \nabla u + A \cdummy \nabla u, \]
  which satisfies
  \begin{equation}
    \nabla \cdummy F = \lambda u - \mathcal{A} u \in H^{- 1} .
    \label{divergenceFinHminus}
  \end{equation}
  Indeed, $b_2 \cdummy \nabla u \in H^{- 1}$ by Lemma \ref{Laplacebound}, and
  also
  \begin{equation}
    | \langle \nabla \cdummy (b_2 u), v \rangle | = | \langle u, b_2 \cdummy
    \nabla v \rangle | \lesssim \| u \|_{H^1} \| v \|_{H^1} .
    \label{dualargument}
  \end{equation}
  Let now $(g_n)_{n \in \mathbb{N}}$ be as in the assumption of the lemma and
  fix $n \in \mathbb{N}$. We set $g_{n, \ell} = \rho^{\ell} \ast g_n$ and
  consider $C^{\infty}_c \ni u_m \rightarrow u$ in $H^1$. Note that, since $f
  (0) = 0$, also $f \circ u_m \in C^{\infty}_c$. We obtain
  \begin{eqnarray*}
    - \langle f \circ u, F \cdummy \nabla g_n \rangle & =& - \lim_m \langle f
    \circ u_m, F \cdummy \nabla g_n \rangle\\
    & =& - \lim_m \lim_{\ell} \langle f \circ u_m, F \cdummy \nabla g_{n,
    \ell} \rangle\\
    & =& \lim_m \lim_{\ell} \langle \nabla \cdummy ((f \circ u_m) F), g_{n,
    \ell} \rangle\\
    & =& \lim_m \lim_{\ell} \langle \nabla \cdummy F, (f \circ u_m) g_{n,
    \ell} \rangle + \langle \nabla (f \circ u_m) \cdummy F, g_{n, \ell}
    \rangle\\
    & =& \lim_m \langle \nabla \cdummy F, (f \circ u_m) g_n \rangle + \langle
    \nabla (f \circ u_m) \cdummy F, g_n \rangle\\
    & =& \langle \nabla \cdummy F, (f \circ u) g_n \rangle_{H^{- 1}, H^1} +
    \langle \nabla (f \circ u), F g_n \rangle .
  \end{eqnarray*}
  In the first equality we used that $F \nabla g_n \in L^1$ and the dominated
  convergence theorem. Since $(f \circ u_m) F \in L^{\frac{2 p}{p + 2}}$, the
  same reasoning yields the second equality. In the third and fourth equality
  we integrated by parts and applied the Leibniz rule for products of smooth
  functions and distributions. In the fifth and sixth equality we are allowed
  to pass to the limit in the first term due to convergence in $H^1 = W^{1,
  2}$ in the second variable, e.g. writing
  \begin{equation}
    \nabla ((f \circ u) g_n) = \nabla (f \circ u) g_n + (f \circ u) \nabla g_n
    \in L^2, \label{productforHone}
  \end{equation}
  and the fact that $\nabla \cdummy F \in H^{- 1}$ by
  (\ref{divergenceFinHminus}). In the sixth equality we used that $F g_n \in
  L^2$ by \ref{lemmaassumptionii}. In total, we obtain
  \begin{eqnarray}
    - \langle (f \circ u), F \cdummy \nabla g_n \rangle & = &\langle \nabla
    \cdummy F, (f \circ u) g_n \rangle + \langle \nabla (f \circ u), F g_n
    \rangle \nonumber\\
    & = &\langle (\lambda u - \mathcal{A} u), (f \circ u) g_n \rangle +
    \langle (f' \circ u) \nabla u, (\nabla u + A \nabla u) g_n \rangle
    \nonumber\\
    & = &\langle (\lambda u - \mathcal{A} u), (f \circ u) g_n \rangle +
    \langle (f' \circ u) | \nabla u |^2, g_n \rangle,  \label{eqapproxF}
  \end{eqnarray}
  applying the pointwise anti-symmetry of $A$ in the third equality. For the
  left hand side we get furthermore
  \[ \lim_n \langle (f \circ u) F, \nabla g_n \rangle = \lim_n \int (f \circ
     u) \nabla u^T \cdummy \nabla g_n + \int (f \circ u) \nabla u^T \cdummy A
     \cdummy \nabla g_n = 0, \]
  by \ref{lemmaassumptioni} and \ref{lemmaassumptioniii}. Plugging this into
  (\ref{eqapproxF}), we obtain

  \[ \langle (\lambda u - \mathcal{A} u), (f \circ u) \rangle_{H^{- 1}, H^1} +
     \int (f' \circ u) | \nabla u |^2 = 0, \]
  where we applied \ref{lemmaassumptioni} for taking the limit $n \rightarrow
  \infty$ in the second term and for the first term we used
  \ref{lemmaassumptioni} and (\ref{productforHone}) to obtain the weak
  convergence $(f \circ u) g_n \rightarrow f \circ u$ in $H^1$. Now, let
  $(f_n) \subset C^{\infty}_c (\mathbb{R}, \mathbb{R})$ be such that $\| f'_n
  \|_{\infty} \leqslant 1$ and $f_n (x) = x$ for $| x | < n$. In particular
  $f_n' (x) = 1$ for $| x | < n$ and $\lim_n (f'_n \circ u) = 1$, pointwise.
  We obtain
  \[ \int | \nabla u |^2 + \lambda | u |^2 \mathd x - \langle u, \mathcal{A} u
     \rangle_{H^1, H^{- 1}} = \lim_{n \rightarrow \infty} \int_{\mathbb{T}^2}
     (\nabla u)^2 (f'_n \circ u) + (f_n \circ u) (\lambda u - \mathcal{A} u)
     \mathd x = 0, \]
  where we have used that $f_n \circ u \rightarrow u $ in $H^1$. Hence,
  \[ 0 = \| (\lambda - \mathLaplace)^{1 / 2} u \|_{L^2}^2 - \langle u,
     \mathcal{A} u \rangle_{H^1, H^{- 1}}, \]
  where
  \begin{equation}
    \langle u, \mathcal{A} u \rangle_{H^1, H^{- 1}} \lesssim \delta (\lambda)
    \| b_2 \|_{L^r} \| (\lambda - \mathLaplace)^{1 / 2} u \|_{L^2}^2,
    \label{laplaceboundpropositoin}
  \end{equation}
  again by Lemma \ref{Laplacebound} (and the same reasoning as in
  (\ref{dualargument})), implying $u = 0$ if $\lambda > 0$ is large enough.
  
  For $b_2 \in L^d$, $d \geqslant 3$, the argument follows along the same
  lines, except that we only get
  \[ \langle u, \mathcal{A} u \rangle_{H^1, H^{- 1}} \lesssim \| b_2 \|_{L^r}
     \| (1 - \mathLaplace)^{1 / 2} u \|_{L^2}^2, \]
  instead of (\ref{laplaceboundpropositoin}), so that we can only conclude $u
  = 0$ if $\| b_2 \|_{L^d}$ is small enough, i.e. smaller than a certain
  constant which only depends on $d$.
\end{proof}

\begin{remark}
  In a late stage of this work  we noticed that
  diffusion operators with distributional divergence-free drift, given by the
  divergence of some antisymmetric matrix field $A$ of low regularity have
  recently gained considerable interest in the PDE community, see e.g.
  \cite{SereginSilvestreSverakZlatos2010, MazjaVerbitsky06, QianXi18} and the foundational work
  {\cite{Osada1987}}. To the best of our knowledge all of these works do not
  go beyond the critical regime of $A \in \tmop{BMO}$, i.e. $A$ is of bounded
  mean oscillation, which implies $A \in L^p$ for all $p < \infty$ and thus is far from the regularities we allow in Lemma~\ref{lemmainjective}.
  
  The closest argument to our proof we found
  in the literature is {\cite{Oelschlaeger88}}, where the author shows a CLT
  for a non-singular SDE driven by a random, divergence-free, critically
  correlated field. In the proof the author shows uniqueness of solutions
  to the resolvent equation in some abstract Hilbert space, using a similar approximation as in Lemma~\ref{lemmainjective}. Specifically, he applies some bounded $f$ to the solution and
  multiplies with functions $g_n$ which remove the unbounded parts of $A$.
  However he can choose a ``naive'' cutoff $g_n (\omega) = 1_{\{ (1 + |
  \mathbf{x} |) \| A \|_{\infty} \lesssim n \}} (\omega)$, due to the specific
  setting, see pages 1100-1105 in {\cite{Oelschlaeger88}}.
\end{remark}

With help of the previous two results, we are ready to prove Theorem
\ref{Wellposednesslocaldiverging}.

\begin{proof}[Proof of Theorem \ref{Wellposednesslocaldiverging}.]
  Theorem \ref{tightnesstorusBesov} yields existence of an energy solution and
  the convergence statement while Propositions \ref{existencesemigroup} and
  \ref{uniquenessabstract} together with Lemma \ref{lemmainjective} yield
  uniqueness once we show that $(\lambda - \mathcal{L}) C_c^{\infty}$ is dense
  in $H^{- 1}$ for $\lambda > 0$ large enough (or $\| b_2 \|_{L^d}$ small
  enough). Let $v \in H^{- 1}$ be such that
  \[ \langle (\lambda - \mathcal{L}) u, v \rangle_{H^{- 1}} = 0, \qquad
     \forall u \in C_c^{\infty} . \]
  Then, writing $w = (1 - \mathLaplace)^{- 1} v \in H^1$, we have for all $u
  \in C_c^{\infty}$,
  \begin{eqnarray*}
    \langle (\lambda - \mathcal{L}) u, v \rangle_{H^{- 1}} & =& \langle \lambda
    u - \mathLaplace u - \nabla \cdummy (A \cdummy \nabla u) - b_2  \cdummy
    \nabla u, (1 - \mathLaplace)^{- 1} v \rangle\\
    & =& \langle u, \lambda w - \mathLaplace w + \nabla \cdummy (A \cdummy
    \nabla w) + \nabla \cdummy (b_2  w) \rangle,
  \end{eqnarray*}
  where we are allowed to integrate by parts because $A \cdummy \nabla w \in
  L^1$ and $u \in C^{\infty}_c$. This means that $\lambda w - \mathLaplace w +
  \nabla \cdummy (A \cdummy \nabla w) + \nabla \cdummy (b_2  w) = 0$, which
  implies $w = 0$ by Lemma \ref{lemmainjective}.
\end{proof}

Finally, we prove Lemma~\ref{uniquenesscompactset} which gives a sufficient
condition for the existence of $(g_n)_{n \in \mathbb{N}}$ as in
Theorem~\ref{Wellposednesslocaldiverging}.

\begin{proof}[Proof of Lemma \ref{uniquenesscompactset}.]
  Let $K$ be a compact set such that for $B^{\varepsilon} = \{ x : d (x, K)
  \leqslant \varepsilon \}$:
  \[ \sup_{\varepsilon \in (0, 1)} \varepsilon^{- 2} \tmop{Leb}
     (B^{\varepsilon}), \qquad \sup_{\varepsilon \in (0, 1)} \varepsilon^{- 2}
     \int_{B^{\varepsilon}} | A |^2 < \infty, \qquad A \1_{B_{\varepsilon}^c}
     \in L^{\infty} (M) \text{ for all } \varepsilon \in (0, 1) . \]
  For $\varepsilon \in (0, 1)$, let $g_{\varepsilon} \in C^{\infty} (M,
  \mathbb{R})$ be such that $\sup_{\varepsilon > 0} \| g_{\varepsilon}
  \|_{\infty} < \infty$ and for some $C > 0$,
  \begin{equation}
    g_{\varepsilon} (x) = \left\{\begin{array}{ll}
      1, & d (x, K) > \varepsilon,\\
      0, & d (x, K) < \varepsilon / 2,
    \end{array}\right. \qquad \text{and} \qquad \| \nabla g_{\varepsilon}
    \|_{\infty} \leqslant C \varepsilon^{- 1} . \label{gdef}
  \end{equation}
  The existence of such $(g_{\varepsilon})_{\varepsilon \in (0, 1)}$ is shown
  in Lemma~$\ref{existenceg}$ in the appendix. Given $(g_{\varepsilon})$, we
  verify the conditions of Lemma \ref{lemmainjective}. Clearly,
  $g_{\varepsilon} \in L^{\infty}$ and $\nabla g_{\varepsilon} \in L^2 \cap
  L^{\infty}$ and also $A g_{\varepsilon} \in L^{\infty}$ (so, Condition ii.
  in Lemma~\ref{lemmainjective} is satisfied) as well as $A \cdummy \nabla
  g_{\varepsilon} \in L^2$. Note that $\tmop{Leb} (B^{\varepsilon}) \lesssim
  \varepsilon^2$ implies $\tmop{Leb} (K) = 0$. For $h \in L^2 (M ;
  \mathbb{R}^d)$ we have
  \begin{eqnarray*}
    \int \nabla g_{\varepsilon} (x) \cdummy h (x) \mathd x & \leqslant & \left(
    \int_{B^{\varepsilon}} | \nabla g_{\varepsilon} (x) |^2 \mathd x
    \right)^{1 / 2} \left( \int_{B^{\varepsilon}} | h |^2 (x) \mathd x
    \right)^{1 / 2}\\
    & \lesssim & (\sup_{\delta} \delta^{- 2} \tmop{Leb} (B^{\delta}))^{1 / 2}
    \left( \int_{B^{\varepsilon}} | h |^2 (x) \mathd x \right)^{1 / 2}\\
    && \rightarrow 0,
  \end{eqnarray*}
  as $\varepsilon \rightarrow 0$, so that $\nabla g_{\varepsilon} \rightarrow
  0$ weakly in $L^2$. Similarly, we have for $h \in L^1$ by the dominated
  convergence theorem
  \[ \lim_{\varepsilon \rightarrow 0} \int g_{\varepsilon} h = \int h, \]
  i.e. $g_{\varepsilon} \rightarrow 1$ in the weak-$\ast$ topology in
  $L^{\infty}$. This proves Condition~i. in Lemma~\ref{lemmainjective}. To see
  Condition iii., let $h \in L^2 (\mathbb{M}; \mathbb{R}^d)$ and note that
  \[ \left| \int h^T \cdummy A \nabla g_{\varepsilon} \right|^2 = \left|
     \int_{B^{\varepsilon}} h^T \cdummy A \nabla g_{\varepsilon} \right|^2
     \lesssim \int_{B^{\varepsilon}} | A \nabla g_{\varepsilon} |^2  \| h
     1_{B^{\varepsilon}} \|_{L^2}^2 \lesssim \sup_{\delta} \delta^{- 2}
     \int_{B^{\delta}} | A |^2  \| h 1_{B^{\varepsilon}} \|_{L^2}^2
     \rightarrow 0 \]
  by the dominated convergence theorem.
\end{proof}

\appendix\section{Auxiliary results}\label{appendixA}

For $p \in [1, \infty]$ we define the space
\[ B^0_{p, 1, 2} = \left\{ u \in B^0_{p, 1} : \| u \|_{B^0_{p, 1, 2}}^2 =
   \sum_j \| \Pi_{\geqslant j} u \|_{B^0_{p, 1}}^2 < \infty \right\}, \]
where $\Pi_{\geqslant j} u = \sum_{i \geqslant j} \mathLaplace_j u$. One can
check that $(B^0_{p, 1, 2}, \| \cdummy \|_{B^0_{p, 1, 2}})$ is a Banach space
and
\[ B^0_{p, 1, 2} \longhookrightarrow B^0_{p, 1} \longhookrightarrow L^p . \]
\begin{lemma}
  \label{estimateforsquarenorm}For $p \in [1, \infty]$, let $b \in L^4_T
  B^0_{2 p, 1, 2}$. Then, $| b |^2 \in L^2_T B^0_{p, 1}$ and
  \[ \| | b |^2 \|_{L^2_T B^0_{p, 1}} \lesssim \| b \|^2_{L^4_T B^0_{2 p, 1,
     2}} . \]
\end{lemma}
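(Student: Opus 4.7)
The plan is to establish a pointwise-in-time inequality
\[
\||b(t)|^2\|_{B^0_{p,1}} \lesssim \|b(t)\|^2_{B^0_{2p,1,2}},
\]
and then integrate in time: squaring and integrating over $[0,T]$ gives
$\||b|^2\|_{L^2_T B^0_{p,1}}^2 \lesssim \int_0^T \|b(t)\|^4_{B^0_{2p,1,2}}\,dt = \|b\|^4_{L^4_T B^0_{2p,1,2}}$, so the claim follows upon taking square roots.

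For the pointwise bound I would use Bony's paraproduct decomposition, writing (componentwise in the vector-valued case)
\[
b \cdot b = 2\pi_<(b,b) + \pi_=(b,b), \qquad \pi_<(f,g) = \sum_j S_{j-1}f\,\Delta_j g, \quad \pi_=(f,g) = \sum_{|i-j|\leqslant 1} \Delta_i f \cdot \Delta_j g,
\]
and estimate the two contributions in $B^0_{p,1}$ separately. For the low--high paraproduct, standard Fourier-support considerations and H\"older's inequality give
\[
\|\Delta_k \pi_<(b,b)\|_{L^p} \lesssim \|S_{k-1}b\|_{L^{2p}}\,\|\Delta_k b\|_{L^{2p}} \leqslant \|b\|_{L^{2p}}\,\|\Delta_k b\|_{L^{2p}},
\]
so summing in $k$ and chaining the embeddings $B^0_{2p,1,2} \hookrightarrow B^0_{2p,1} \hookrightarrow L^{2p}$ yields $\|\pi_<(b,b)\|_{B^0_{p,1}} \lesssim \|b\|_{L^{2p}}\|b\|_{B^0_{2p,1}} \lesssim \|b\|^2_{B^0_{2p,1,2}}$.

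The main point of the argument lies in the resonant term: $\Delta_k \pi_=(b,b)$ only involves Littlewood-Paley pairs $\Delta_m b \cdot \tilde\Delta_m b$ with $m \geqslant k - N$, since otherwise the product has its Fourier support in too low a frequency band to meet frequency $2^k$. H\"older's inequality then gives
\[
\|\Delta_k \pi_=(b,b)\|_{L^p} \lesssim \Bigl(\sum_{m \geqslant k-N}\|\Delta_m b\|_{L^{2p}}\Bigr)^2 = \|\Pi_{\geqslant k-N} b\|_{B^0_{2p,1}}^2,
\]
and summing in $k$, up to a harmless index shift, produces exactly $\|b\|^2_{B^0_{2p,1,2}}$. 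This is the step where the non-standard space $B^0_{2p,1,2}$ is tailor-made for the estimate: its $\ell^2_k$-summation of tail norms $\|\Pi_{\geqslant k}b\|_{B^0_{2p,1}}$ is precisely what absorbs the resonant contribution, whereas a naive $B^0_{2p,1}$-based estimate would produce a logarithmic loss (since each $\|\Delta_m b\|_{L^{2p}}^2$ would be counted $\sim m$ times). Combining the low-high and resonant bounds gives the pointwise estimate; the time integration is then immediate.
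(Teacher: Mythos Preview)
Your proof is correct and follows essentially the same approach as the paper: both use Bony's paraproduct decomposition, bound the low--high paraproduct by $\|b\|_{L^{2p}}\|b\|_{B^0_{2p,1}}$, and handle the resonant term via the key observation that $\Delta_k(b\odot b)$ only sees frequencies $\geqslant k-N$, so its $L^p$ norm is controlled by $\|\Pi_{\geqslant k-N}b\|_{B^0_{2p,1}}^2$, which after summing in $k$ yields exactly $\|b\|_{B^0_{2p,1,2}}^2$. The only cosmetic difference is that the paper routes the resonant estimate through the intermediate bound $\|\Pi_{\geqslant j-K}b \odot \Pi_{\geqslant j-K}b\|_{B^0_{p,\infty}}$ and cites external references, whereas you argue directly.
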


\begin{proof}
  We estimate with the paraproduct $\olessthan$ and the resonant product
  $\odot$
  \begin{eqnarray*}
    \| | b |^2 \|_{B^0_{p, 1}} & \leqslant & 2 \| b \olessthan b \|_{B^0_{p, 1}}
    + \| b \odot b \|_{B^0_{p, 1}}\\
    & \lesssim & \| b \|_{L^{2 p}} \| b \|_{B^0_{2 p, 1}} + \| b \odot b
    \|_{B^0_{p, 1}}\\
    & \lesssim & \| b \|_{B^0_{2 p, 1, 2}}^2 + \| b \odot b \|_{B^0_{p, 1}},
  \end{eqnarray*}
  where we used Theorem 27.5 in {\cite{vanZuijlen22}} in the second step.
  Furthermore,
  \[ \mathLaplace_j (b \odot b) = \sum_{| i - i' | \leqslant 1} \mathLaplace_j
     (\mathLaplace_i b \mathLaplace_{i'} b) = \mathLaplace_j (\Pi_{\geqslant j
     - K} b \odot \Pi_{\geqslant j - K} b), \]
  for a certain $K \in \mathbb{N}$ which depends only on the annulus
  $\mathcal{A}$. Then, by Theorem 27.10 in {\cite{vanZuijlen22}},
  \[ \| \mathLaplace_j (b \odot b) \|_{L^p} \leqslant \| \Pi_{\geqslant j - K}
     b \odot \Pi_{\geqslant j - K} b \|_{B^0_{p, \infty}} \lesssim \|
     \Pi_{\geqslant j - K} b \|_{B^0_{2 p, 1}}^2, \]
  meaning that
  \[ \| b \odot b \|_{B^0_{p, 1}} \lesssim \sum_j \| \Pi_{\geqslant j} b
     \|_{B^0_{2 p, 1}}^2 = \| b \|_{B^0_{2 p, 1, 2}}^2 . \]
  Now the claim follows by integrating in time.
\end{proof}

\begin{lemma}
  \label{uniformboundinapproxnew}Let $b \in L^q_T B^0_{p, 1, 2}$ with $p, q
  \in [1, \infty]$. Then,
  \[ \sup_n \| \rho^n \ast b \|_{L^q_T B^0_{p, 1, 2}} < \infty, \]
  and for $p, q < \infty$ we have $\rho^n \ast b \rightarrow b$ in $L^q_T
  B^0_{p, 1, 2}$.
\end{lemma}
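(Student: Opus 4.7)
The plan is to exploit the fact that $\rho^n = \rho^n_t\,\rho^n_x$ is a product of mollifiers in time and space of mass one, that the spatial Littlewood--Paley projectors $\Delta_j$ and hence $\Pi_{\geqslant j}$ commute with convolution against $\rho^n$, and that $\Pi_{\geqslant j}$ is uniformly bounded on $B^0_{p,1}$. Together with Minkowski's integral inequality and a dominated convergence argument on the index $j$, these ingredients should give both the uniform bound and the convergence.

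For the uniform bound I would set $F(t,j)=\|\Pi_{\geqslant j} b(t,\cdot)\|_{B^0_{p,1}}$ and $F^n(t,j)=\|\Pi_{\geqslant j}(\rho^n\ast b)(t,\cdot)\|_{B^0_{p,1}}$. Since $\Pi_{\geqslant j}$ commutes with convolution in time and space, $\Pi_{\geqslant j}(\rho^n\ast b) = \rho^n_t \ast_t (\rho^n_x \ast_x \Pi_{\geqslant j} b)$, so Young's inequality (applied first in space to each $\Delta_i \Pi_{\geqslant j} b$ and then in time) yields
\[ F^n(t,j) \leqslant (\rho^n_t \ast_t F(\cdot,j))(t). \]
Applying Minkowski's inequality in $\ell^2_j$ to commute the convolution with the $\ell^2_j$-norm gives $\|F^n(t,\cdot)\|_{\ell^2_j} \leqslant (\rho^n_t\ast_t\|F(\cdot,\cdot)\|_{\ell^2_j})(t)$, and then Young in the $t$-variable gives the desired uniform bound
\[ \|\rho^n\ast b\|_{L^q_T B^0_{p,1,2}} \leqslant \|b\|_{L^q_T B^0_{p,1,2}}. \]

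For the convergence statement, fix any subsequence $(n_k)$; I shall show it has a further subsequence along which $\rho^n\ast b$ converges to $b$ in $L^q_T B^0_{p,1,2}$, which by a standard topological argument implies convergence of the full sequence. The uniform bound above gives the pointwise domination $F^{n}(t,j) \leqslant 2 F(t,j)$ (redoing the estimate for $b-\rho^n\ast b$), and in particular $\sum_j F^n(t,j)^2 \leqslant 4\|b(t,\cdot)\|_{B^0_{p,1,2}}^2$, which is $L^{q/2}_T$ by assumption. On the other hand, since $\Pi_{\geqslant j}$ commutes with $\rho^n\ast$ and is bounded on $B^0_{p,1}$, for each fixed $j$ one has $\rho^n\ast \Pi_{\geqslant j} b \to \Pi_{\geqslant j} b$ in $L^q_T B^0_{p,1}$ (which is a standard mollifier convergence for $p,q<\infty$), so $\|F^n(\cdot,j)-0\|_{L^q_T} \to 0$ for every $j$. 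By a diagonal extraction, pass to a further subsequence $(n_{k_l})$ along which $F^{n_{k_l}}(t,j)\to 0$ for every $j$ and almost every $t$.

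The remaining step is to feed this into dominated convergence twice: first in $\ell^2_j$ with dominating sequence $j\mapsto 2F(t,j)\in\ell^2_j$, yielding $\sum_j F^{n_{k_l}}(t,j)^2\to 0$ for a.e.~$t$; then in $L^1_T$ with dominating function $t\mapsto (4\|b(t,\cdot)\|_{B^0_{p,1,2}}^2)^{q/2}\in L^1_T$, yielding convergence of $\|\rho^{n_{k_l}}\ast b - b\|_{L^q_T B^0_{p,1,2}}^q\to 0$. The only point that needs some care is the mollifier convergence $\rho^n\ast f\to f$ in $L^q_T B^0_{p,1}$ for $p,q<\infty$, which should follow by the same density argument one uses on $L^q_T L^p$ (smooth compactly supported functions are dense and one has the uniform Young-type bound from the argument above); the $\ell^1$ summability in the Besov norm passes through by the same DCT trick as in the $\ell^2_j$ layer.
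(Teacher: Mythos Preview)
Your uniform bound is correct, but the convergence argument has a genuine gap at the domination step. You claim that ``redoing the estimate for $b-\rho^n\ast b$'' gives the pointwise bound $F^n(t,j)\leqslant 2F(t,j)$. It does not: the estimate you actually derived for the uniform bound is
\[
\|\Pi_{\geqslant j}(\rho^n\ast b)(t)\|_{B^0_{p,1}}\leqslant (\rho^n_t\ast_t F(\cdot,j))(t),
\]
which is a \emph{time average} of $F(\cdot,j)$, not $F(t,j)$ itself. Hence the honest bound is $F^n(t,j)\leqslant F(t,j)+(\rho^n_t\ast_t F(\cdot,j))(t)$, and the second term depends on $n$ and is not dominated by any fixed multiple of $F(t,j)$ pointwise in $t$. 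Without a legitimate $n$-independent majorant in $\ell^2_j$, your first application of dominated convergence fails.

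The paper avoids this by splitting off the temporal and spatial mollifications: $\rho^n\ast b - b = \rho^n_t\ast_t(\rho^n_x\ast_x b - b) + (\rho^n_t\ast_t b - b)$. For the first piece there is only a spatial convolution inside the $B^0_{p,1,2}$ norm, so your domination $\|\Pi_{\geqslant j}(\rho^n_x\ast_x b - b)(t)\|_{B^0_{p,1}}\leqslant 2F(t,j)$ is now valid and the DCT argument goes through; a further time-Young estimate then controls this term by $\|\rho^n_x\ast_x b - b\|_{L^q_T B^0_{p,1,2}}\to 0$. The second piece is pure time mollification of a function with values in the Banach space $B^0_{p,1,2}$, and its convergence for $q<\infty$ follows by approximating $b$ with elements of $C_T B^0_{p,1,2}$. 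Your subsequence--DCT strategy could in principle be rescued for $q>1$ via the Hardy--Littlewood maximal function and a Fefferman--Stein vector-valued inequality in $\ell^2_j$, but as written the stated domination is incorrect and the argument does not cover the full range $q\in[1,\infty)$.
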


\begin{proof}
  We set $b^n = \rho^n \ast b$, where we recall that $\rho^n = n^{d + 1} \rho
  (n \cdummy)$ and $\rho (t, x) = \rho_t (t) \rho_x (x)$, where $\rho_t,
  \rho_x$ are positive mollifiers in space and time, and we extend $b (t, x) =
  0$ for $t \in [0, T]^c$. Then
  \[ \rho^n \ast b = \rho^n_t \ast_t (\rho^n_x \ast_x b) = \rho^n_t \ast_t
     (\rho^n_x \ast_x b - b) + (\rho^n_t \ast_t b - b) + b. \]
  Applying Minkowski's inequality we obtain
  \[ \| \rho^n_t \ast_t (\rho^n_x \ast_x b - b) (s) \|_{B^0_{p, 1, 2}}
     \leqslant \int \rho^n_t (r) \| (\rho^n_x \ast_x b - b) (s - r)
     \|_{B^0_{p, 1, 2}} \mathd r = \rho^n_t \ast \| \rho^n_x \ast_x b - b
     \|_{B^0_{p, 1, 2}} (s) . \]
  Thus, since we extended $b$ by $0$ outside of $[0, T]$,
  \[ \| \rho^n_t \ast_t (\rho^n_x \ast_x b - b) \|_{L^q_T B^0_{p, 1, 2}}
     \leqslant \| \rho^n_x \ast_x b - b \|_{L^q_T B^0_{p, 1, 2}} . \]
  It follows from Young's inequality that the right hand side is uniformly
  bounded in $n$. Moreover, writing out the definition of the $B^0_{p, 1, 2}$
  norm, we see that for $p, q < \infty$ the right hand side converges to zero
  by convergence of mollifications in $L^p$-spaces, Young's inequality, and
  the dominated convergence theorem. For the next term,
  \[ \rho^n_t \ast_t b (s) - b (s) = \int \rho^n_t (s - r) (b (r) - b (s))
     \mathd r, \]
  we obtain again with Minkowski's inequality,
  \begin{equation}
    \| \rho^n_t \ast_t b - b \|_{B^0_{p, 1, 2}} (s) \leqslant \int \rho^n_t (s
    - r) \| b (r) - b (s) \|_{B^0_{p, 1, 2}} \leqslant \rho^n_t \ast_t \| b
    \|_{B^0_{p, 1, 2}} (s) + \| b \|_{B^0_{p, 1, 2}} (s), \label{A2xest}
  \end{equation}
  and thus
  \[ \| \rho^n_t \ast_t b - b \|_{L^q_T B^0_{p, 1, 2}} \lesssim \| b \|_{L^q_T
     B^0_{p, 1, 2}} . \]
  The convergence to zero of the left hand side in (\ref{A2xest}) for $p, q <
  \infty$ follows from an approximation of $b$ with elements in $C_T B^0_{p,
  1, 2}$.
\end{proof}

\begin{lemma}
  \label{existenceg}The sequence $g_{\varepsilon}$ from Proposition
  \ref{uniquenesscompactset} exists.
\end{lemma}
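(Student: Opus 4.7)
The plan is to build $g_\varepsilon$ explicitly as the convolution at scale $\varepsilon/4$ of a smooth mollifier against the indicator of the tubular neighborhood complement $\{d(\cdummy,K)>3\varepsilon/4\}$. The matching of the thresholds $\varepsilon$, $\varepsilon/2$ appearing in \eqref{gdef} with the offset $3\varepsilon/4$ and the mollification radius $\varepsilon/4$ is what makes all the estimates align; the verification reduces to the $1$-Lipschitz continuity of $d(\cdummy,K)$. Concretely, I will fix a non-negative radial mollifier $\rho\in C^\infty_c(\mathbb{R}^d)$ with $\int\rho=1$ and $\operatorname{supp}\rho\subset B(0,1)$, set $\rho_\delta(y)=\delta^{-d}\rho(y/\delta)$, and define
\[
g_\varepsilon(x) \assign \bigl(\rho_{\varepsilon/4}\ast \1_{\{d(\cdummy,K)>3\varepsilon/4\}}\bigr)(x), \qquad \varepsilon\in(0,1).
\]
On $\mathbb{T}^d$ I will use the natural periodic convolution (equivalently, lift $K$ to $\mathbb{R}^d$ and periodize the outcome); this is unambiguous once $\varepsilon$ is smaller than the injectivity radius, which is harmless since only the limit $\varepsilon\to 0$ matters.

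The verification of the properties in \eqref{gdef} will be straightforward. Smoothness of $g_\varepsilon$ is immediate from $\rho\in C_c^\infty$, and $0\leqslant g_\varepsilon\leqslant 1$ follows from $\rho\geqslant 0$, $\int\rho=1$, and the indicator taking values in $[0,1]$. For the two pointwise identities I will use $|d(x,K)-d(y,K)|\leqslant|x-y|$: whenever $|x-y|\leqslant\varepsilon/4$ the two distances differ by at most $\varepsilon/4$. Thus if $d(x,K)>\varepsilon$ then every $y$ in the effective convolution window satisfies $d(y,K)>3\varepsilon/4$, so the indicator is identically $1$ there and $g_\varepsilon(x)=1$; symmetrically, if $d(x,K)<\varepsilon/2$ then $d(y,K)<3\varepsilon/4$ throughout the window and $g_\varepsilon(x)=0$.

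For the gradient bound I will move the derivative onto the mollifier and apply Young's convolution inequality,
\[
\|\nabla g_\varepsilon\|_\infty \;\leqslant\; \|\nabla\rho_{\varepsilon/4}\|_{L^1}\,\bigl\|\1_{\{d(\cdummy,K)>3\varepsilon/4\}}\bigr\|_\infty \;=\; 4\varepsilon^{-1}\|\nabla\rho\|_{L^1},
\]
which gives the claim with $C=4\|\nabla\rho\|_{L^1}$. I do not anticipate any genuine obstacle; the only point requiring mild care is the torus case, where one must ensure that the periodic convolution does not wrap around $\mathbb{T}^d$ so that $d(\cdummy,K)$ retains its usual meaning throughout the integrand. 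Taking $\varepsilon$ sufficiently small reduces this to the Euclidean construction above.
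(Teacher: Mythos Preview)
Your construction is correct and actually a bit more direct than the paper's. The paper proceeds in two stages: it first mollifies the Lipschitz function $d(\cdummy,K)$ at some scale $\delta$, then composes the result with a fixed one-dimensional cutoff $g\in C^\infty(\mathbb{R}_+)$, setting $g_{\varepsilon,\delta}(x)=g(\varepsilon^{-1}\rho_\delta\ast d(\cdummy,K)(x))$. The gradient bound $\|\nabla g_{\varepsilon,\delta}\|_\infty\lesssim\varepsilon^{-1}$ follows from the chain rule together with the fact that mollifications of $1$-Lipschitz functions are $1$-Lipschitz, and this bound is uniform in $\delta$; the mollification scale $\delta(\varepsilon)$ is then chosen a posteriori so that $\rho_\delta\ast d(\cdummy,K)$ is within $\varepsilon/8$ of $d(\cdummy,K)$, which adjusts the thresholds correctly.

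Your route avoids the two-step construction by mollifying the indicator $\1_{\{d(\cdummy,K)>3\varepsilon/4\}}$ directly at scale $\varepsilon/4$, and the threshold arithmetic $(\varepsilon/2,\,3\varepsilon/4,\,\varepsilon)$ does the rest. The gradient bound then comes from Young's inequality and the $L^1$-scaling of $\nabla\rho_{\varepsilon/4}$, rather than from the chain rule. This is slightly more elementary and needs no auxiliary one-dimensional cutoff; the paper's version, on the other hand, isolates the $1$-Lipschitz structure of $d(\cdummy,K)$ more transparently and would adapt more readily if one wanted finer pointwise control on $\nabla g_\varepsilon$ (e.g.\ in terms of $\nabla d(\cdummy,K)$ where it exists). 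For the present purpose both are equally adequate.
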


\begin{proof}
  Let $g \in C^{\infty} (\mathbb{R}_+ ; \mathbb{R}_+)$ be such that
  \[ g (x) = \left\{\begin{array}{ll}
       0, & x \in \left[ 0, \frac{1}{2} + \frac{1}{8} \right],\\
       1, & x \in \left[ 1 - \frac{1}{8}, \infty \right) .
     \end{array}\right. \]
  For $\delta = \delta (\varepsilon) > 0$ and a positive mollifier $\rho :
  \mathbb{R}^d \rightarrow \mathbb{R}$ we set
  \[ g_{\varepsilon, \delta} (x) = g (\varepsilon^{- 1} \rho_{\delta} \ast d
     (\cdummy, K) (x)), \]
  where $\rho_{\delta} = \delta^{- d} \rho (\delta^{- 1} \cdummy)$. Now note
  that
  \[ | \nabla \rho_{\delta} \ast d (\cdummy, K) (x) | \leqslant 1 . \]
  Indeed, it follows from the triangle inequality that
  \[ | d (x, K) - d (y, K) | \leqslant | x - y |, \qquad x, y \in M, \]
  and therefore
  \[ | \rho_{\delta} \ast d (\cdummy, K) (x) - \rho_{\delta} \ast d (\cdummy,
     K) (y) | \leqslant \int \mathd z \rho_{\delta} (z) | d (x - z, K) - d (y
     - z, K) | \leqslant | x - y | . \]
  This means that
  \[ | \nabla g_{\varepsilon, \delta} (x) | = \varepsilon^{- 1} | g'
     (\varepsilon^{- 1} \rho_{\delta} \ast d (\cdummy, K) (x)) \nabla
     \rho_{\delta} \ast d (\cdummy, K) (x) | \lesssim \varepsilon^{- 1}, \]
  uniformly in $\delta$. Now we can choose $\delta (\varepsilon)$ as we want.
  Since $\rho_{\delta} \ast d (\cdummy, K)$ converges to $d (\cdummy, K)$
  uniformly by Lipschitz continuity of $d (\cdummy, K)$, there exists $\delta
  (\varepsilon) > 0$ such that
  \[ \| \rho_{\delta (\varepsilon)} \ast d (\cdummy, K) - d (\cdummy, K)
     \|_{\infty} < \frac{\varepsilon}{8} . \]
  For this $\delta (\varepsilon)$, we set
  \[ g_{\varepsilon} = g_{\varepsilon, \delta (\varepsilon)} . \]
  Then for $x$ such that $d (x, K) < \varepsilon / 2$ it holds
  \[ | \rho_{\delta (\varepsilon)} \ast d (\cdummy, K) (x) | \leqslant |
     \rho_{\delta (\varepsilon)} \ast d (\cdummy, K) (x) - d (x, K) | + d (x,
     K) < \varepsilon \left( \frac{1}{2} + \frac{1}{8} \right), \]
  i.e. $g_{\varepsilon} (x) = 0$. Similarly, we obtain $g_{\varepsilon} (x) =
  1$ for $d (x, K) > \varepsilon$.
\end{proof}

\begin{example}[Counterexample to Morrey regularity]
  \label{counterexampleMorrey}The second assumption in (\ref{divcond}) can be
  interpreted as a {\tmem{local}} form of Morrey regularity on the set $K$.
  The Morrey space $L^{2, 2}$ (see {\cite{ChiarenzaFrasca1990}} for a
  definition) is the set of all $f \in L^2_{\tmop{loc}} (\mathbb{R}^d)$ such
  that
  \[ \sup_{x \in \mathbb{R}^d, \varepsilon > 0} \varepsilon^{- 2}
     \int_{B_{\varepsilon} (x)} | f |^2 < \infty . \]
  Here we construct an $A \in L^2$ which fulfills the assumptions
  (\ref{divcond}) and (\ref{condfinite}) but is not in $L^{2, 2}$ (hence the
  term ``local''). We consider $A = | A | B$, where $B$ is an arbitrary
  antisymmetric matrix independent of $x$, and
  \[ | A | = \sum_n \alpha_n \sqrt{\rho^{\varepsilon_n}} (\cdummy - 2^{- n}
     v), \]
  where $\rho$ is a compactly supported, positive mollifier with $\tmop{supp}
  \rho \subset B_1 (0)$, $\rho^{\varepsilon} = \varepsilon^{- d} \rho
  (\varepsilon^{- 1} \cdummy)$, $v \in \mathbb{R}^d, | v | = 1$ is some
  vector, and $\sum_n | \alpha_n |^2 < \infty$. We take $\varepsilon_n
  \leqslant 2^{- n - 3}$ which means that the supports of the
  $\rho_{\varepsilon_n} (\cdummy - 2^{- n} v)$ do not intersect so that indeed
  $A \in L^2$. Note also that $A$ is compactly supported. Furthermore, we set
  $K = \{ 0 \}$. Now,
  \[ \int_{B_{\varepsilon} (0)} | A |^2 \leqslant \sum_{n \geqslant \left( -
     \ln \frac{8}{7} - \ln \varepsilon \right) / \ln 2 } | \alpha_n |^2, \]
  and for $\alpha_n$ converging sufficiently fast to zero we obtain
  $\int_{B_{\varepsilon} (0)} | A |^2 \lesssim \varepsilon^2$, meaning that
  $A$ fulfills (\ref{divcond}) and (\ref{condfinite}). On the other hand
  \[ \sup_{\delta} \delta^{- 2} \int_{B_{\delta} (2^{- n} v)} | A |^2
     \geqslant \varepsilon_n^{- 2} | \alpha_n |^2 \int_{B_{\varepsilon_n} (0)}
     \rho_{\varepsilon_n} = \varepsilon_n^{- 2} | \alpha_n |^2, \]
  and by letting $\varepsilon_n$ converge to zero faster than $\alpha_n$ we
  can make the right hand side unbounded in $n$. The same argument also shows
  that for any $\lambda > 0$ we can find $A$ satisfying (\ref{divcond}) and
  (\ref{condfinite}) but such that $A \nin L^{\lambda, 2}$.
\end{example}

\begin{lemma}[Regularity of the log-regularized Gaussian free field]\label{lem:GFF-reg}
Let $\xi$ be a massless Gaussian free field on $\mathbb T^2$ and let $\alpha>0$. Then almost surely
\[
	\log (1 - \mathLaplace)^{- \alpha} \xi \in B^0_{\infty, \infty},
\]
and consequently we have for all $\alpha>1$
\[
	\log (1 - \mathLaplace)^{- \alpha} \xi \in B^0_{\infty, 1}.
\]
\end{lemma}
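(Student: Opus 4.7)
The field $\eta \assign \log((1-\mathLaplace)^{-\alpha})\xi$ is a centered stationary Gaussian field on $\mathbb{T}^2$, obtained from $\xi$ via a real Fourier multiplier. Each Littlewood--Paley block $\mathLaplace_j \eta$ is therefore a stationary real Gaussian trigonometric polynomial whose Fourier coefficients are supported in the dyadic annulus $|k| \sim 2^j$. The plan is to bound $\|\mathLaplace_j\eta\|_{L^\infty}$ pathwise, uniformly in $j$, by (i) computing the variance $\sigma_j^2 \assign \mathbb{E}|\mathLaplace_j\eta(x)|^2$; (ii) converting this into an $L^\infty$ estimate via a Gaussian supremum bound; (iii) upgrading to an almost-sure bound via Gaussian concentration and Borel--Cantelli; (iv) summing (or taking the supremum) over $j$.

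For (i), using $\hat\xi(k) = Z_k/|2\pi k|$ for $k \neq 0$ (with $Z_k$ independent standard complex Gaussians subject to the reality condition $Z_{-k} = \overline{Z_k}$, and $\hat\xi(0) = 0$) and denoting by $m$ the Fourier symbol of $\log((1-\mathLaplace)^{-\alpha})$, Plancherel gives
\[
\sigma_j^2 = \sum_{k \neq 0} |\varphi_j(k)|^2\,\frac{|m(k)|^2}{|2\pi k|^2}.
\]
On the annulus $|k| \sim 2^j$ the lattice-point count $\sim 2^{2j}$ essentially cancels the factor $1/|2\pi k|^2 \sim 2^{-2j}$, so $\sigma_j^2$ is comparable to the average of $|m(k)|^2$ across the annulus, yielding an explicit $j$-dependent scaling in terms of $\alpha$.

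For (ii), since $\mathLaplace_j \eta$ is a Gaussian trigonometric polynomial on $\mathbb{T}^2$ of degree $\sim 2^j$, the Marcus--Pisier / Salem--Zygmund inequality (equivalently, Dudley's chaining together with Bernstein's inequality for the relevant metric entropy) gives for every $p \geq 2$
\[
\bigl(\mathbb{E}\,\|\mathLaplace_j\eta\|_{L^\infty}^p\bigr)^{1/p} \lesssim \sqrt{p}\,\sigma_j\,\sqrt{1+j},
\]
the $\sqrt{1+j}$ factor reflecting $\sqrt{\log 2^j}$ from the chaining. Combined with the Borell--Sudakov--Tsirelson Gaussian concentration inequality one then has exponential tails around the mean; choosing deviations at scale $\sigma_j \sqrt{C\log(j+2)}$ for $C$ large makes the exceedance probabilities summable in $j$, and Borel--Cantelli then yields almost surely $\|\mathLaplace_j \eta\|_{L^\infty}(\omega) \lesssim_\omega \sigma_j\sqrt{1+j}$ for every $j \geq 0$.

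The $B^0_{\infty,\infty}$ statement reduces to the uniform boundedness of $\sigma_j \sqrt{1+j}$ in $j$, and the $B^0_{\infty,1}$ statement to its summability; both will follow from the dyadic estimate of $|m|$ on the annulus $|k| \sim 2^j$ produced in step~(i), in the claimed ranges of $\alpha$. The main obstacle is the precise dyadic computation of $|m(k)|$ giving the sharp scaling of $\sigma_j$, so that the cross-over with the logarithmic factor $\sqrt{1+j}$ from the Gaussian supremum bound matches the asserted thresholds $\alpha > 0$ and $\alpha > 1$; the remaining Gaussian analysis is entirely routine.
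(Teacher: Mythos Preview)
Your framework is sound and in fact sharper than the paper's; the gap is that you defer the decisive computation and assert the outcome will match the stated thresholds, which it does not. Carrying out your step~(i): the multiplier has symbol $m(k)=(\log(1+4\pi^2|k|^2))^{-\alpha}$, so on the $j$-th annulus $|m(k)|\sim j^{-\alpha}$, and since the $\sim 2^{2j}$ lattice points exactly cancel the factor $|2\pi k|^{-2}\sim 2^{-2j}$ you obtain $\sigma_j\sim j^{-\alpha}$. Your Salem--Zygmund/chaining bound then gives $\|\mathLaplace_j\eta\|_{L^\infty}\lesssim\sigma_j\sqrt{1+j}\sim j^{1/2-\alpha}$ almost surely. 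This is bounded in $j$ only for $\alpha\ge 1/2$ and summable only for $\alpha>3/2$, not the claimed $\alpha>0$ and $\alpha>1$. The $\sqrt{1+j}$ factor for the supremum of a band-limited stationary Gaussian field is moreover two-sided (Sudakov minoration matches the Dudley upper bound here), so these thresholds are sharp for your method.

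The paper proceeds differently: it bounds $\|\mathLaplace_j\eta\|_{L^\infty}$ by the $\ell^1$-norm of the Fourier coefficients, applies Cauchy--Schwarz over the $\sim 2^{2j}$ modes, and finishes with a $\chi^2(n)$ tail estimate $\mathbb{P}(Z\ge 2tn)\le e^{-tn/10}$. However, that tail bound is invoked with $t\sim j^{2\alpha}2^{-2j}\to 0$, i.e.\ at a threshold far below $\mathbb{E}Z\sim 2^{2j}$, which is outside the valid range of such inequalities; for large $j$ the event in question has probability close to one, not $e^{-\delta j^{2\alpha}C^2/20}$. So neither argument establishes the $\alpha>0$ claim as written, and your chaining analysis actually delivers the correct asymptotics $\|\mathLaplace_j\eta\|_{L^\infty}\sim j^{1/2-\alpha}$: the $\sqrt{j}$ Gaussian-supremum loss shifts both thresholds upward by $1/2$ relative to what the lemma asserts.
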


\begin{proof}
	 In a first step we show that
  $\log (1 - \mathLaplace)^{- \alpha} \xi \in B^0_{\infty, \infty}$ whenever
  $\alpha > 0$. By the Borel-Cantelli lemma, it suffices to show that for some
  $C > 0$,
  \begin{equation}
    \sum_j \mathbb{P} (\| W_j \|_{L^{\infty}} > C) < \infty,
    \label{sumforborelcantelli}
  \end{equation}
  where (as in {\cite{Veraar11}})
  \[ W_j = \sum_{k \in \mathbb{Z}^2 \setminus \{ 0 \}} \varphi_j (k) (1 + 4
     \pi^2 | k |^2)^{- 1 / 2} \log (1 + 4 \pi^2 | k |^2)^{- \alpha} \gamma_k
     e_k, \]
  $(\varphi_j)_{j \geqslant - 1}$ is the dyadic partition of unity used to
  define our Littlewood-Paley blocks, $(e_k)_{k \in \mathbb{Z}^2}$ are the
  Fourier monomials, and $(\gamma_k)_{k \in \mathbb{Z}^2}$ are i.i.d. standard
  (complex) Gaussians. We estimate, using $\sum_{i = 1}^n | x_i | \leqslant
  \sqrt{n} \left( \sum_{i = 1}^n | x_i |^2 \right)^{1 / 2}$, for an annulus
  $\mathcal{A}$ such that $\tmop{supp} (\varphi_j) \subset 2^j \mathcal{A}$
  for $j \geqslant 0$, and for a constant $\delta > 0$ whose value can change
  in every line
  \begin{eqnarray*}
    \mathbb{P} (\| W_j \|_{L^{\infty}} > C) & \leqslant & \mathbb{P} \left(
    \sum_{k \in 2^j \mathcal{A}} \varphi_j (k) (1 + 4 \pi^2 | k |^2)^{- 1 / 2}
    \log (1 + 4 \pi^2 | k |^2)^{- \alpha} | \gamma_k | > C \right)\\
    & \leqslant & \mathbb{P} \left( \sum_{k \in 2^j \mathcal{A}} (1 + 4 \pi^2 |
    k |^2)^{- 1} \log (1 + 4 \pi^2 | k |^2)^{- 2 \alpha} | \gamma_k |^2 >
    \delta 2^{- 2 j} C^2 \right)\\
    & \leqslant & \mathbb{P} \left( \sum_{k \in 2^j \mathcal{A}} | \gamma_k |^2
    > \delta j^{2 \alpha} C^2 \right) .
  \end{eqnarray*}
  Let $| \mathcal{A} |$ be the number of integer points in $\mathcal{A}$.
  Since $\sum_{k \in 2^j \mathcal{A}} | \gamma_k |^2 \sim \chi^2 (|
  \mathcal{A} | 2^{2 j})$ has a chi-square distribution, we can use the tail
  bound bound $\mathbb{P} (Z \geqslant t 2 n) \leq e^{- t n / 10}$ from
  {\cite{dkoutsou2018}} for $Z \sim \chi^2 (n)$ with $t = \delta j^{2 \alpha}
  C^2 | \mathcal{A} |^{- 1} 2^{- 2 j - 1}$ and $n = | \mathcal{A} | 2^{2 j}$
  and we obtain
  \[ \mathbb{P} (\| W_j \|_{L^{\infty}} > C) \leqslant e^{- \delta j^{2
     \alpha} C^2 / 20} . \]
  Hence the sum (\ref{sumforborelcantelli}) is finite and we conclude that
  $\log (1 - \mathLaplace)^{- \alpha} \xi \in B^0_{\infty, \infty}$ for any
  $\alpha > 0$. Finally, estimating for $\beta > \frac{1}{r}$
  \begin{eqnarray*}
    \| u \|_{B^0_{\infty, r}}^r & = & \sum_j \| \mathLaplace_j u
    \|_{L^{\infty}}^r\\
    & \leqslant & \| \mathLaplace_{- 1} u \|_{L^{\infty}}^r + \|
    \mathLaplace_0 u \|_{L^{\infty}}^r + \sup_{j \geqslant 1} j^{\beta r} \|
    \mathLaplace_j u \|_{L^{\infty}}^r \sum_{j \geqslant 1} j^{- r \beta} \\
    & \lesssim & \| \log (1 - \mathLaplace)^{\beta} u \|_{B^0_{\infty,
    \infty}}^r,
  \end{eqnarray*}
  we conclude that $\log (1 - \mathLaplace)^{- \alpha} \xi \in B^0_{\infty,
  r}$ for any $r > \frac{1}{\alpha}$, $r \geqslant 1$.	
\end{proof}

\paragraph{Acknowledgements}We would like to thank Xiaohao Ji, Ana Djurdjevac
and Immanuel Zachhuber for fruitful and continued discussions. Further
gratitude is owed to the organizers of the Summer School on PDEs and
Randomness 2023 at the Max Planck Institute for Mathematics in the Sciences in
Leipzig for their hospitality and for giving us the opportunity to present
this (and related) work. A special thanks goes to Massimiliano Gubinelli for
his feedback and for his idea of using the exponential martingale inequality
for the estimate in Lemma \ref{stochexponential}, which simplified the proof
significantly. Finally, we thank Zimo Hao  and  Xicheng Zhang for sending us
their work {\cite{HaoZhang23}} before its publication on arXiv.

We are grateful for funding by DFG through EXC 2046, Berlin Mathematical
School, through IRTG 2544 ``Stochastic Analysis in Interaction''. We also are grateful for seed support for DFG CRC/TRR 388 ``Rough Analysis, Stochastic Dynamics and Related Topics''.

\begin{small}

\end{small}


\begin{thebibliography}{BFGM19}
  \bibitem[ABK24]{Armstrong2024}Scott Armstrong, Ahmed Bou-Rabee, and  Tuomo
  Kuusi. {\newblock}Superdiffusive central limit theorem for a Brownian
  particle in a critically-correlated incompressible random drift.
  {\newblock}\tmtextit{ArXiv preprint arXiv:2404.01115}, 2024.{\newblock}

  \bibitem[ALL23]{AnzelettiLeLing23}Lukas Anzeletti, Khoa L{\^e}, and 
  Chengcheng Ling. {\newblock}Path-by-path uniqueness for stochastic
  differential equations under Krylov-R{\"o}ckner condition.
  {\newblock}\url{https://arxiv.org/pdf/2304.06802}, 2023.
  {\newblock}Preprint.{\newblock}
  
  \bibitem[BC01]{Bass2001}Richard~F.~Bass  and  Zhen-Qing Chen.
  {\newblock}Stochastic differential equations for Dirichlet processes.
  {\newblock}\tmtextit{Probab. Theory Related Fields}, 121(3):422--446,
  2001.{\newblock}
  
  \bibitem[BCD11]{BahouriCheminDanchin11}Hajer Bahouri, Jean-Yves Chemin, and 
  Rapha{\"e}l Danchin. {\newblock}\tmtextit{Fourier analysis and nonlinear
  partial differential equations},  volume  343  of \tmtextit{Grundlehren der
  Mathematischen Wissenschaften}. {\newblock}Springer, 2011.{\newblock}
  
  \bibitem[BFGM19]{BeckFlandoliGubinelliMaurelli14}Lisa Beck, Franco Flandoli,
  Massimiliano Gubinelli, and  Mario Maurelli. {\newblock}Stochastic ODEs and
  stochastic linear PDEs with critical drift: regularity, duality and
  uniqueness. {\newblock}\tmtextit{Electronic Journal of Probability},
  24(136):1--72, 2019.{\newblock}
  
  \bibitem[BGL13]{BakryGentilLedoux14}Dominique Bakry, Ivan Gentil, and 
  Michel Ledoux. {\newblock}\tmtextit{Analysis and Geometry of Markov
  Diffusion} \tmtextit{Operators},  volume  348  of \tmtextit{Grundlehren der
  mathematischen Wissenschaften}. {\newblock}Springer Cham, 1  edition,
  2013.{\newblock}
  
  \bibitem[CC18]{CannizzaroChouk18}Giuseppe Cannizzaro  and  Khalil Chouk.
  {\newblock}Multidimensional SDEs with singular drift and universal
  construction of the polymer measure with white noise potential.
  {\newblock}\tmtextit{Annals of Probability}, 46(3):1710--1763,
  2018.{\newblock}
  
  \bibitem[CET23]{CannizzaroErhardToninelli21}Giuseppe Cannizzaro, Dirk
  Erhard, and  Fabio Toninelli. {\newblock}Weak coupling limit of the
  anisotropic KPZ equation. {\newblock}\tmtextit{Duke Math. J.},
  172(16):3013--3104, 2023.{\newblock}
  
  \bibitem[CF90]{ChiarenzaFrasca1990}Filippo Chiarenza  and  Michele Frasca.
  {\newblock}A Remark on a Paper by C. Fefferman.
  {\newblock}\tmtextit{Proceedings of the American Mathematical Society},
  108(2):407--409, February 1990.{\newblock}
  
  \bibitem[CGT23]{CannizzaroGubinelliToninelli23}Giuseppe Cannizzaro,
  Massimiliano Gubinelli, and  Fabio Toninelli. {\newblock}Gaussian
  Fluctuations for the stochastic Burgers equation in dimension $d{\geq}2$.
  {\newblock}\url{https://arxiv.org/pdf/2304.05730.pdf}, 2023.
  {\newblock}Preprint.{\newblock}
  
  \bibitem[CHT21]{CannizzaroHaunschmidSibitzToninelli21}Giuseppe Cannizzaro,
  Levi Haunschmid-Sibitz, and  Fabio Toninelli. {\newblock}Sqrt(log
  t)-superdiffusivity for a Brownian particle in the curl of the 2d GFF.
  {\newblock}\tmtextit{Annals of Probability}, 50(6):2475--2498,
  2021.{\newblock}
  
  \bibitem[CMOW22]{ChatzigeorgiouMorfeOttoWang23}Georgiana Chatzigeorgiou,
  Peter Morfe, Felix Otto, and  Lihan Wang. {\newblock}The Gaussian free-field
  as a stream function: asymptotics of effective diffusivity in infra-red
  cut-off. {\newblock}\url{https://arxiv.org/pdf/2212.14244.pdf}, 2022.
  {\newblock}Preprint.{\newblock}
  
  \bibitem[CQ02]{Coutin2002}Laure Coutin  and  Zhongmin Qian.
  {\newblock}Stochastic analysis, rough path analysis and fractional Brownian
  motions. {\newblock}\tmtextit{Probab. Theory Related Fields},
  122(1):108--140, 2002.{\newblock}
  
  \bibitem[CZZ21]{ChenZhangZhao21}Zhen-Qing Chen, Xicheng Zhang, and  Guohuan
  Zhao. {\newblock}Supercritical SDEs driven by multiplicative stable-like
  L\'{}evy processes. {\newblock}\tmtextit{Transactions of the American
  Mathematical Society}, 374(11):7621--7655, 2021.{\newblock}
  
  \bibitem[Dav76]{Burgess76}Burgess Davis. {\newblock}On the Lp norms of
  stochastic integrals and other martingales. {\newblock}\tmtextit{Duke
  Mathematical Journal}, 43(4):697--704, 1976.{\newblock}
  
  \bibitem[DD16]{DelarueDiel16}Fran{\c c}ois Delarue  and  Roland Diel.
  {\newblock}Rough paths and 1d SDE with a time dependent distributional
  drift: application to polymers. {\newblock}\tmtextit{Probability Theory and
  Related Fields}, 165(1-2):1--63, 2016.{\newblock}
  
  \bibitem[dko18]{dkoutsou2018}Chi-squared distribution tail bound.
  {\newblock}\url{https://math.stackexchange.com/questions/2864188/chi-squared-distribution-tail-bound},
  2018. {\newblock}QA website.{\newblock}
  
  \bibitem[DL89]{DiPerna1989}R.~J.~DiPerna  and  P.-L.~Lions.
  {\newblock}Ordinary differential equations, transport theory and Sobolev
  spaces. {\newblock}\tmtextit{Invent. Math.}, 98(3):511--547,
  1989.{\newblock}
  
  \bibitem[EK86]{Ethier1986}Stewart~N.~Ethier  and  Thomas~G.~Kurtz.
  {\newblock}\tmtextit{Markov processes: Characterization and} \tmtextit{convergence}.
  {\newblock}John Wiley \& Sons, 1986.{\newblock}
  
  \bibitem[EN00]{EngelNagel00}Klaus-Jochen Engel  and  Rainer Nagel.
  {\newblock}\tmtextit{One-parameter semigroups for linear evolution}  \tmtextit{equations
  (Graduate Texts in Mathematics 194)}. {\newblock}Springer, 2000.{\newblock}
  
  \bibitem[Eva98]{Evans98}Lawrence~C.~Evans. {\newblock}\tmtextit{Partial
  Differential Equations},  volume~19  of \tmtextit{Graduate Studies in
  Mathematics}. {\newblock}American Mathematical Society, 1  edition,
  1998.{\newblock}
  
  \bibitem[FGP10]{Flandoli2010}F.~Flandoli, M.~Gubinelli, and  E.~Priola.
  {\newblock}Well-posedness of the transport equation by stochastic
  perturbation. {\newblock}\tmtextit{Invent. Math.}, 180(1):1--53,
  2010.{\newblock}
  
  \bibitem[Fig08]{Figalli2008}Alessio Figalli. {\newblock}Existence and
  uniqueness of martingale solutions for SDEs with rough or degenerate
  coefficients. {\newblock}\tmtextit{J. Funct. Anal.}, 254(1):109--153,
  2008.{\newblock}
  
  \bibitem[FIR17]{FlandoliIssoglioRusso17}Franco Flandoli, Elena Issoglio, and
  Francesco Russo. {\newblock}Multidimensional stochastic differential
  equations with distributional drift. {\newblock}\tmtextit{Transactions of
  the American Mathematical Society}, 369:1665--1688, 2017.{\newblock}
  
  \bibitem[Fla11]{Flandoli2011}Franco Flandoli. {\newblock}\tmtextit{Random
  Perturbation of PDEs and Fluid Dynamic Models:}\\ \tmtextit{{\'E}cole d'{\'e}t{\'e} de
  Probabilit{\'e}s de Saint-Flour XL--2010},  volume  2015.
  {\newblock}Springer Science \& Business Media, 2011.{\newblock}
  
  \bibitem[FRW03]{Flandoli2003}Franco Flandoli, Francesco Russo, and  Jochen
  Wolf. {\newblock}Some SDEs with distributional drift. I. General calculus.
  {\newblock}\tmtextit{Osaka J. Math.}, 40(2):493--542, 2003.{\newblock}
  
  \bibitem[FW22]{FeltesWeber22}Guilherme~L.~Feltes  and  Hendrik Weber.
  {\newblock}Brownian particle in the curl of 2-d stochastic heat equations.
  {\newblock}\url{https://arxiv.org/pdf/2211.02194.pdf}, 2022.
  {\newblock}Preprint.{\newblock}
  
  \bibitem[GIP15]{Gubinelli2015Paracontrolled}Massimiliano Gubinelli, Peter
  Imkeller, and  Nicolas Perkowski. {\newblock}Paracontrolled distributions
  and singular PDEs. {\newblock}\tmtextit{Forum of Mathematics, Pi}, 3(e6),
  2015.{\newblock}
  
  \bibitem[GJ13]{GubinelliJara13}Massimiliano Gubinelli  and  Milton Jara.
  {\newblock}Regularization by noise and stochastic Burgers equations.
  {\newblock}\tmtextit{Stochastic Partial Differential Equations: Analysis and
  Computations}, 1(2):325--350, 2013.{\newblock}
  
  \bibitem[GJ14]{GoncalvesJara14}Patricia Gon{\c c}alves  and  Milton Jara.
  {\newblock}Nonlinear Fluctuations of Weakly Asymmetric Interacting Particle
  Systems. {\newblock}\tmtextit{Archive for Rational Mechanics and Analysis},
  212:597--644, 2014.{\newblock}
  
  \bibitem[GP18]{GubinelliPerkowski18}Massimiliano Gubinelli  and  Nicolas
  Perkowski. {\newblock}Energy solutions of KPZ are unique.
  {\newblock}\tmtextit{Journal of the American Mathematical Society},
  31(2):427--471, 2018.{\newblock}
  
  \bibitem[GP20]{GubinelliPerkowski20}Massimiliano Gubinelli  and  Nicolas
  Perkowski. {\newblock}The infinitesimal generator of the stochastic Burgers
  equation. {\newblock}\tmtextit{Probability Theory and Related Fields},
  178:1067--1124, 2020.{\newblock}
  
  \bibitem[GP23]{GraefnerPerkowski23}Lukas Gr{\"a}fner  and  Nicolas
  Perkowski. {\newblock}Energy solutions and generators of singular SPDEs.
  {\newblock}\url{https://files-www.mis.mpg.de/mpi-typo3/events-files/slides_342_5751.pdf},
  May 2023. {\newblock}Summer School on PDEs and Randomness, lecture
  notes.{\newblock}
  
  \bibitem[GP24+]{GraefnerPerkowski23plus}Lukas Gr{\"a}fner  and  Nicolas
  Perkowski. {\newblock}Weak well-posedness of energy solutions to some
  critical singular SPDEs. {\newblock}2024+. {\newblock}Preprint.{\newblock}
  
  \bibitem[GT20]{GubinelliTurra20}Massimiliano Gubinelli  and  Mattia Turra.
  {\newblock}Hyperviscous stochastic Navier--Stokes equations with white noise
  invariant measure. {\newblock}\tmtextit{Stochastics and
  Dynamics\href{https://www.worldscientific.com/toc/sd/20/06}{}}, 20(6),
  2020.{\newblock}
  
  \bibitem[Hai14]{Hairer2014}Martin Hairer. {\newblock}A theory of regularity
  structures. {\newblock}\tmtextit{Invent. Math.}, 198(2):269--504,
  2014.{\newblock}
  
  \bibitem[Hai24]{Hairer2024}Martin Hairer. {\newblock}Renormalisation in the
  presence of variance blowup. {\newblock}\tmtextit{ArXiv:2401.10868},
  2024.{\newblock}
  
  \bibitem[HP86]{HaussmannPardoux86}Ulrich Haussmann  and  {\'E}tienne
  Pardoux. {\newblock}Time reversal of diffusions. {\newblock}\tmtextit{The
  Annals of Probability}, 14(4):1188--1205, 1986.{\newblock}
  
  \bibitem[HZ23]{HaoZhang23}Zimo Hao  and  Xicheng Zhang. {\newblock}SDES WITH
  SUPERCRITICAL DISTRIBUTIONAL DRIFTS.
  {\newblock}\url{https://arxiv.org/pdf/2312.11145}, 2023.
  {\newblock}Preprint.{\newblock}
  
  \bibitem[JS03]{Jacod2003}Jean Jacod  and  Albert~N.~Shiryaev.
  {\newblock}\tmtextit{Limit theorems for stochastic processes}.
  {\newblock}Springer, 2nd  edition, 2003.{\newblock}
  
  \bibitem[Kat95]{Kato80}Tosio Kato. {\newblock}\tmtextit{Perturbation Theory
  for Linear Operators (Grundlehren der}\\ \tmtextit{mathematischen Wissenschaften)}, 
  volume  132. {\newblock}Springer-Verlag, 1995. {\newblock}Reprint of the
  1980 Edition.{\newblock}
  
  \bibitem[KLvR19]{Konarovskyi2019}Vitalii Konarovskyi, Tobias Lehmann, and 
  Max-K.~von~Renesse. {\newblock}Dean-Kawasaki dynamics: ill-posedness vs.
  triviality. {\newblock}\tmtextit{Electron. Commun. Probab.}, 24:0,
  2019.{\newblock}
  
  \bibitem[Kol37]{Kolmogorov37}Andrey Kolmogorov. {\newblock}Zur Umkehrbarkeit
  der statistischen Naturgesetze. {\newblock}\tmtextit{Mathematische Annalen},
  113:766--772, 1937.{\newblock}
  
  \bibitem[KP22]{KrempPerkowski22}Helena Kremp  and  Nicolas Perkowski.
  {\newblock}Multidimensional SDE with distributional drift and L{\'e}vy
  noise. {\newblock}\tmtextit{Bernoulli}, 28(3):1757--1783, 2022.{\newblock}
  
  \bibitem[KP23]{Kremp2023}Helena Kremp  and  Nicolas Perkowski.
  {\newblock}Rough weak solutions for singular L{\'e}vy SDEs.
  {\newblock}\tmtextit{ArXiv:2309.15460}, 2023.{\newblock}
  
  \bibitem[KR05]{Krylov2005}N.~V.~Krylov  and  M.~R{\"o}ckner.
  {\newblock}Strong solutions of stochastic equations with singular time
  dependent drift. {\newblock}\tmtextit{Probab. Theory Related Fields},
  131(2):154--196, 2005.{\newblock}
  
  \bibitem[Kry23]{Krylov2023}N.~V.~Krylov. {\newblock}On strong solutions of
  It{\^o}'s equations with $D \sigma$ and $b$ in Morrey classes containing
  $L_d$. {\newblock}\tmtextit{Ann. Probab.}, 51(5):1729--1751,
  2023.{\newblock}
  
  \bibitem[Lee65]{Leeuw65}Karel~de Leeuw. {\newblock}On L\tmrsub{$\mathrm{p}$}
  Multipliers. {\newblock}\tmtextit{Annals of Mathematics}, 81(2):364--79,
  1965.{\newblock}
  
  \bibitem[LL19]{LeBris2019}Claude Le Bris  and  Pierre-Louis Lions.
  {\newblock}\tmtextit{Parabolic equations with irregular data and} \tmtextit{related
  issues---applications to stochastic differential equations},  volume~4  of
  \tmtextit{De Gruyter Series in Applied and Numerical Mathematics}.
  {\newblock}De Gruyter, Berlin, [2019] {\textcopyright}2019.{\newblock}
  
  \bibitem[LST22]{Lee2022}Haesung Lee, Wilhelm Stannat, and  Gerald Trutnau.
  {\newblock}\tmtextit{Analytic theory of It{\^o}-stochastic differential}
  \tmtextit{equations with non-smooth coefficients}. {\newblock}SpringerBriefs in
  Probability and Mathematical Statistics. Springer, Singapore,
  2022.{\newblock}
  
  \bibitem[LZ21]{LuoZhu21}Dejun Luo  and  Rongchan Zhu. {\newblock}Stochastic
  mSQG equations with multiplicative transport noises: White noise solutions
  and scaling limit. {\newblock}\tmtextit{Stochastic Processes and their
  Applications}, 140:236--286, 2021.{\newblock}
  
  \bibitem[Mat94]{Mathieu1994}Pierre Mathieu. {\newblock}Zero white noise
  limit through Dirichlet forms, with application to diffusions in a random
  medium. {\newblock}\tmtextit{Probab. Theory Related Fields}, 99(4):549--580,
  1994.{\newblock}
  
  \bibitem[MS18]{Modena2018}Stefano Modena  and  L{\'a}szl{\'o}
  Sz{\'e}kelyhidi, Jr. {\newblock}Non-uniqueness for the transport equation
  with Sobolev vector fields. {\newblock}\tmtextit{Ann. PDE}, 4(2):0,
  2018.{\newblock}
  
  \bibitem[MV06]{MazjaVerbitsky06}Vladimir Mazja  and  Igor Verbitsky.
  {\newblock}Form boundedness of the general second-order differential
  operator. {\newblock}\tmtextit{Communications on Pure and Applied
  Mathematics}, 59(9):1286--1329, 2006.{\newblock}
  
  \bibitem[Oel88]{Oelschlaeger88}Karl Oelschl{\"a}ger.
  {\newblock}Homogenization of a diffusion process in divergence-free random
  field. {\newblock}\tmtextit{The Annals of Probability}, 16(3):1084--1126,
  1988.{\newblock}
  
  \bibitem[Osa87]{Osada1987}Hirofumi Osada. {\newblock}Diffusion processes
  with generators of generalized divergence form. {\newblock}\tmtextit{J.
  Math. Kyoto Univ.}, 27(4):597--619, 1987.{\newblock}
  
  \bibitem[Per02]{Perkins2002}Edwin Perkins. {\newblock}Dawson-Watanabe
  superprocesses and measure-valued diffusions. {\newblock}In
  \tmtextit{Lectures on probability theory and statistics (Saint-Flour,
  1999)},  volume  1781  of \tmtextit{Lecture Notes in Math.},  pages 
  125--324. Springer, Berlin, 2002.{\newblock}
  
  \bibitem[QX18]{QianXi18}Zhongmin Qian  and  Guangyu Xi. {\newblock}Parabolic
  equations with singular divergence-free drift vector fields.
  {\newblock}\tmtextit{Journal of the London Mathematical Society},
  100(1):17--40, 2018.{\newblock}
  
  \bibitem[RS80]{SimonReed80}Michael Reed  and  Barry Simon.
  {\newblock}\tmtextit{I: Functional Analysis},  volume~1  of
  \tmtextit{Methods of modern Mathematical Physics}. {\newblock}Academic
  Press, 1  edition, 1980.{\newblock}
  
  \bibitem[RZ23]{RoecknerZhao23}Michael R{\"o}ckner  and  Guohuan Zhao.
  {\newblock}SDEs with critical time dependent drifts: Weak solutions.
  {\newblock}\tmtextit{Bernoulli}, 29(1):757--784, February 2023.{\newblock}
  
  \bibitem[SS{\v S}Z12]{SereginSilvestreSverakZlatos2010}Gregory Seregin, Luis
  Silvestre, Vladimir {\v S}ver{\'a}k, and  Andrej Zlato{\v s}. {\newblock}On
  divergence-free drifts. {\newblock}\tmtextit{Journal of Differential
  Equations}, 252(1):505--540, 2012.{\newblock}
  
  \bibitem[Sta99]{Stannat1999}Wilhelm Stannat. {\newblock}The theory of
  generalized Dirichlet forms and its applications in analysis and
  stochastics. {\newblock}\tmtextit{Mem. Amer. Math. Soc.}, 142(678):0,
  1999.{\newblock}
  
  \bibitem[Sze75]{Szegoe75}Gabor Szeg{\"o}. {\newblock}\tmtextit{Orthogonal
  Polynomials},  volume~23  of \tmtextit{Colloquium Publications}.
  {\newblock}American Mathematical Society, 4  edition, 1975.{\newblock}
  
  \bibitem[Tri83]{Triebel83}Hans Triebel. {\newblock}\tmtextit{Theory of
  function spaces}. {\newblock}Birkhauser, 1983.{\newblock}
  
  \bibitem[Tri06]{Triebel2006}Hans Triebel. {\newblock}\tmtextit{Theory of
  function spaces. III},  volume  100  of \tmtextit{Monographs in
  Mathematics}. {\newblock}Birkh{\"a}user Verlag, Basel, 2006.{\newblock}
  
  \bibitem[TV12]{Toth2012}B{\'a}lint T{\'o}th  and  Benedek Valk{\'o}.
  {\newblock}Superdiffusive bounds on self-repellent Brownian polymers and
  diffusion in the curl of the Gaussian free field in $d = 2$.
  {\newblock}\tmtextit{J. Stat. Phys.}, 147:113--131, 2012.{\newblock}
  
  \bibitem[Ver81]{Veretennikov1981}Alexander~Yu.~Veretennikov. {\newblock}On
  strong solution and explicit formulas for solutions of stochastic integral
  equations. {\newblock}\tmtextit{Math. USSR Sb.}, 39:387--403,
  1981.{\newblock}
  
  \bibitem[Ver11]{Veraar11}Mark~C.~Veraar. {\newblock}Regularity of Gaussian
  white noise on the d-dimensional torus. {\newblock}\tmtextit{Banach Center
  Publications}, 95(1):385--398, 2011.{\newblock}
  
  \bibitem[Wer18]{Werner2018}Dirk Werner.
  {\newblock}\tmtextit{Funktionalanalysis}. {\newblock}Springer-Lehrbuch.
  Springer Spektrum Berlin, Heidelberg, 8  edition, 2018.{\newblock}
  
  \bibitem[YY24]{Yang2024}Huanyu Yang  and  Zhilin Yang. {\newblock}Weak
  coupling limit of a Brownian particle in the curl of the 2d GFF.
  {\newblock}\tmtextit{ArXiv:2405.05778}, 2024.{\newblock}
  
  \bibitem[Zui22]{vanZuijlen22}Willem~van Zuijlen. {\newblock}Theory of
  function spaces.
  {\newblock}\url{https://www.wias-berlin.de/people/vanzuijlen/teaching.html},
  2022. {\newblock}Lecture notes.{\newblock}
  
  \bibitem[Zvo74]{Zvonkin1974}A.~K.~Zvonkin. {\newblock}A transformation of
  the phase space of a diffusion process that will remove the drift.
  {\newblock}\tmtextit{Mat. Sb. (N.S.)}, 93(135):129--149, 1974.{\newblock}
  
  \bibitem[ZZ21]{ZhangZhao21}Xicheng Zhang  and  Guohuan Zhao.
  {\newblock}Stochastic Lagrangian Path for Leray's Solutions of 3D
  Navier--Stokes Equations. {\newblock}\tmtextit{Communications in
  Mathematical Physics}, 381(2):1--35, 2021.{\newblock}
\end{thebibliography}
\end{document}